\newtheorem{theorem}{Theorem}
\newtheorem{corollary}[theorem]{Corollary}
\newtheorem{criterion}[theorem]{Criterion}
\newtheorem{definition}[theorem]{Definition}
\newtheorem{example}[theorem]{Example}
\newtheorem{lemma}[theorem]{Lemma}
\newtheorem{problem}[theorem]{Problem}
\newtheorem{proposition}[theorem]{Proposition}
\newtheorem{remark}[theorem]{Remark}
\newenvironment{proof}[1][Proof]{\noindent\textbf{#1.} }{\ \rule{0.5em}{0.5em}}
\begin{document}

\title{Virtual extensions of modules}
\author{by Stephanos Gekas \\
%EndAName
Aristotle University of Thessaloniki, School of Mathematics}
\date{6 March 2017}
\maketitle

\begin{abstract}
In this article we are examining extensions and some basic diagrammatic
properties of modules, in both cases from a new, "virtual" point of view. As
natural background for investigating the kind of problems we are dealing
with, the virtual category of a module $M$ is introduced, having as objects
the submodules of $M$'s subquotients modulo some identifications. In the
case of extensions our approach implies viewing \textquotedblleft
proportionality classes\textquotedblright\ of extensions of (dually, by) a
simple module by (resp. of) another simple as quotients of a certain
quotient (which is in fact a subdirect product) of a projective cover, that
comprises all those classes - or dually as submodules of a comprising
submodule (which is a push-out) of an injective hull. In particular we
become thus able to upgrade the Yoneda correspondence to a bimodule
isomorphism. Basic steps toward the foundation of and investigation into the
theory of Virtual Diagrams are also made here. In particular, the
"virtuality group" $\mathfrak{A}\left( \mathfrak{D}\right) $ of a virtual
diagram $\mathfrak{D}$ of a module $M$ is introduced, generated by the $%
\mathfrak{D}$-visible virtual constituents of $M$, with respect to an
addition that generalizes the one of submodules in a module.
\end{abstract}

\section{\textbf{Introduction} \ }

\ \ \ \textit{\ }

The motivation for this new "virtual", in a sense to be made precise here,
approach to extensions and related "diagrammatic properties" of modules on
the one hand comes from my earlier work \cite{StG1}, whose outset is a
virtual investigation of subdirect group products, on the other hand it has
been necessitated and inspired during my main work toward a theory of
virtual diagrams for modules \cite{StG}. One might therefore consider the
present work as partly inspired by the first and as a transition to the
second of the two mentioned articles.

So we turn now from groups to modules and their extensions from a new, 
\textit{virtual} point of view, which allows us to view and compare them as
a whole families, according to their "embodiment" as well determined
submodules of subquotients (to be called "virtual constituents") in the
structure of a fixed module $M$ - and not just in an abstract way, i.e. up
to isomorphism: The quality of virtuality is crystallized into the notion of
the virtual category $\mathfrak{V}\left( M\right) $ of a module \textit{(or
even of a block, although we do not directly look at it here)}, whose
objects shall be called the \textit{virtual constituents} of $M$. Then we
may embed any indecomposable virtual constituent of Loewy length 2 in a
"partly virtualized" version of an injective hull (of its socle) over an
indecomposable quotient of $M$ or get it as a quotient of some "partly
virtualized" version of a projective cover of its head over some
indecomposable submodule of $M$ (see the generalizing corollary \ref{extr}
of proposition \ref{ipE}), an idea leading to the very important notions of 
\textit{projective and injective extracts} of such 2-fold virtual
constituents of $M$. In section 3 we delve much deeper into the nature of
such non-split constituents, by studying all of their "proportionality"
extension classes, again viewed in two dual ways: either\ comprised
(altogether) in a quotient of a projective cover, which is in fact a
subdirect product, or dually as a submodule of an injective hull, which is
indeed a push-out. 

That analysis allows us also to upgrade the Yoneda correspondence to a
bimodule isomorphism (see Th. \ref{YonHom} \& its corollary).

In that way we win a new insight into the structure of modules, which makes
up the background and puts us well on the way for obtaining the new kind of
diagrams, i.e. ones defined "in virtual terms" (Virtual Diagram), that we
already are introducing here - to further enhance their study (including
proof of existence) in \cite{StG}. The virtual diagrams may somehow, as
suggested in the beginning, be viewed as a parallel extension of some of the
results in \cite{StG1} and their dualization in a category of abelian groups
or of (suitable) modules - a parallelism such as, for example, can be seen
in proposition \ref{ReD} and its corollaries. Based on such virtual
qualities and their natural generalization, we define or rather describe
here what a "virtual diagram" should be, see our Criterion \ref{imp}, so
that we may then begin to study its properties, even before proving its
existence for arbitrary modules in suitable categories. An interesting fruit
of this approach is the "virtuality group" $\mathfrak{A}\left( \mathfrak{D}%
\right) $ of a virtual diagram $\mathfrak{D}$ of a module $M$, generated by
the $\mathfrak{D}$-visible virtual constituents of $M$, with respect to an
addition that generalizes the one of submodules in a module. A "common
enclosure" $\wedge $ on the family of the $\mathfrak{D}$-visible virtual
constituents of $M$, generalizing the votion of intersection of submodules
in a module, is likewise introduced in that set of generators. 

\bigskip \textit{Key words: Virtual diagrams, virtual category, submodule
lattice, (upper/lower) proportional extensions, projective/injective
extracts of virtual constituents,virtuality group }$A\left( \mathfrak{D}%
\right) $\textit{\ of a virtual diagram }$D\left( M\right) $\textit{, Yoneda
correspondence, virtual extension sum.}

\bigskip 

\section{From pull-backs \& push-outs to diagrams}

\bigskip

As we also have thoroughly seen in \cite{StG1}, subgroups of direct products
may be viewed as pull-backs, i.e., fiber products, whose structure
"naturally" conveys diagrammatic depictions of the form $%
\begin{array}{c}
\diagup |\diagdown%
\end{array}%
$ - with $m$ edges in the general case of theorem 34 in \cite{StG1}.
Although we have not given a proper general definition for diagrams, its
suggested use in this case just corresponds to that structure theorem, and
has certainly the special restriction that it refers to a particular
representation of a group $U$ as a subdirect product. It bears,
nevertheless, the basic characteristic that we might expect of any diagram:
Consisting of just two layers (levels), the vertices of the lower one
correspond to (a direct product of) subobjects (subgroups), the vertex on
top is a factor group, namely one that is a factor group in many different
ways according to our general theorem 34. Let us, for our ease, allow
ourselves to call the vertex on top \textit{the head of our depiction of }$U$%
, the direct product of the lower level its \textit{socle}; notice that this
refers only to the particular embedding of $U$\ as a subdirect product.

As expected, in what follows we shall take it for given that any diagram
edge has to represent non-split extensions. This representation will turn
out to be "virtual" for \textit{our} diagrams, in a sense to be specified in
sections 3 and 4 - and then generalized through the Virtual Category for all
diagram edges, as we are going to show in \cite{StG}.

If we, conversely, use the investigated structure of subgroups of direct
products as an inspiration in order to deduce expected properties for such a
basic diagram\textbf{,} then our general theorem 34, our analysis of the
subdirect product structure and, in particular, lemmata 2, 9, 28 and\ 35 in 
\cite{StG1}\ make it clear that:

\begin{lemma}
\label{pbf}\textbf{a.} Any subdiagram of the suggested subdirect product
representation diagram of such a subgroup $U$ comprising just a single edge
(or any proper subset of the set of edges) corresponds to a certain factor
group - but never to a subgroup of $U$. Consequently, there is no proper
subgroup of $U$\ that corresponds to any proper subdiagram containing the
top vertex.

\textbf{b.} The diagrammatic properties of any subdiagram as in (a),
comprising any number of edges, corresponding to a factor group of $U$, are
the same as of the whole diagram of $U$ - i.e., property \textbf{(a)}
"repeats itself".
\end{lemma}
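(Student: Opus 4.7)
The plan is to read both statements through the subdirect product structure established in Theorem 34 of \cite{StG1}. That theorem exhibits $U$ as a subdirect product inside $\prod_{i=1}^{m} S_i$, with every projection $\pi_i : U \twoheadrightarrow S_i$ surjective and with a common quotient $H$ (the head of the diagram) playing the role of amalgamating factor; each edge then encodes the non-split extension $S_i$ of $N_i = \ker(S_i \twoheadrightarrow H)$ by $H$. Under this reading, a subdiagram obtained by retaining only the edges in some index set $J \subsetneq \{1,\dots,m\}$ is naturally identified with the image $\pi_J(U)$ of $U$ under the coordinate projection onto $\prod_{j\in J} S_j$.

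For part \textbf{(a)}, I would first observe that $\pi_J(U)$ is manifestly a quotient of $U$ (its kernel on $U$ being $U \cap \ker \pi_J$), so any proper subdiagram, and in particular any single edge, comes with a canonical factor group interpretation. The remaining, harder, task is to show that no \emph{subgroup} of $U$ can correspond to such a proper subdiagram containing the top vertex. Here is where I would invoke lemmata 2, 9, 28 and 35 of \cite{StG1}: their combined content says that once the head $H$ is fixed, the compatibility conditions across all legs force any subgroup that surjects onto $H$ along the retained legs to also have full projection $S_{i_0}$ on every omitted leg $i_0 \notin J$ (the fibres over $H$ cannot be truncated without collapsing $H$ itself). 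Consequently, such a hypothetical subgroup would already carry the missing edges, contradicting the assumption that the subdiagram is proper; the corollary about proper subdiagrams containing the top vertex drops out immediately.

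For part \textbf{(b)}, the task is to verify that the factor group $\pi_J(U)$ itself meets the hypotheses of Theorem 34 with respect to the restricted family $\{S_j\}_{j\in J}$, so that (a) can be applied to it verbatim. This is essentially a checking step: each restricted projection remains surjective (by composition with the original $\pi_j$), the kernels $N_j$ and the head $H$ are unchanged when one forgets coordinates outside $J$, and the pairwise amalgamation conditions on fibres restrict to any subfamily of legs. Once this is in place, the argument of (a), applied now with $\pi_J(U)$ in place of $U$ and any $J' \subsetneq J$ in place of $J$, yields exactly the "repetition" asserted.

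The main obstacle is the non-existence part of (a): one must be precise about what it means for a subdiagram to \emph{correspond} to a subgroup versus a factor group, and then extract from lemmata 28 and 35 of \cite{StG1} the rigidity statement that the head together with any proper subset of legs already determines full projections onto the remaining legs. Once that step is cleanly formulated, both (a) and (b) reduce to bookkeeping on the subdirect product description of $U$.
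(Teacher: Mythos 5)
Your proposal matches the paper's intent: the paper offers no written proof of this lemma, asserting only that it is "made clear" by Theorem 34 and lemmata 2, 9, 28 and 35 of \cite{StG1}, and your reconstruction derives both parts from exactly that subdirect-product machinery (identifying proper subdiagrams with coordinate projections $\pi_J(U)$, hence factor groups, and using the cited rigidity lemmata for the non-existence of a corresponding subgroup). The argument is consistent with the fibre-product picture the paper sets up, so this is essentially the same approach, merely written out in more detail than the paper provides.
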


\bigskip Notice that whenever we speak of subdiagrams here, we shall mean
that they are connected (unless otherwise stated) and that they include any
edge of the given one if and only if they also include both its ends.

But there is another major feature to justify taking this kind of simple
diagrams as a major cornerstone for a diagrammatic theory:\ \ Namely, what
makes such a diagrammatic depiction especially interesting and worth
studying for us is its \textbf{virtuality}, in the sense that the multiple
direct factors of the "socle" \textit{also are determined set-theoretically}
(as specific subsets of well defined subsections of $U$, by extending
set-theoretical to a module-theoretical sense) and not just up to
isomorphism - meaning that: \textbf{Their vertices correspond to
well-defined subsets of well-defined subsections.} Notice that this
virtuality could not possibly be claimed just by reference to pull-backs, as
these are only defined up to isomorphism; this is why our first approach has
been through "subgroups of direct products".

\textit{Our main focus with diagrammatic methods shall from now on however
shift from groups to modules and representation theory. We want to move
toward a new kind of diagrams there, one having "virtual properties" in a
sense that generalizes the basic "virtuality" described above. The original
motivation toward the main subject of this article has actually been to
begin understanding and substantializing "virtuality" as well as possible:
then I chose to generalize by considering the more difficult category of
groups instead of that of abelian groups or modules, while viewing it as
very interesting also for its own sake. }

The next natural step would now be to consider the dual case, i.e. push-outs.

Things do however get somewhat complicated, if we attempt to dualize these
ideas in the category of groups: In that category coproducts are namely
given by free products - and push-outs by amalgamated \textit{free}
products. That deviates quite from our original motive - and we shall
therefore not look at them in this article. Of particular interest is,
however, the case of extensions of an arbitrary group $G$ by an abelian
group $A$: then the push-out gives the extension that is (functorially)
induced by homomorphisms $A\rightarrow A^{\prime }$\ of $G$-modules, while
the ones induced by homomorphisms $G^{\prime }\rightarrow G$\ are as usually
(see below) obtained as pull-backs. The push-out mentioned here is in this
case a quotient of the semidirect, rather than the direct, product (see \cite%
[IV 3, exercise 1(b); see also ex. 2]{KB}).

On the contrary, things become much more (dually) analogue to what we have
seen about pull-backs also in the case of push-outs, if we confine ourselves
to the category $\mathfrak{Ab}$ of abelian groups: then the push-out of a
family $S\rightarrow A_{i}$, $i\in I$, of morphisms is obtained as a certain
factor group of their direct sum (coproduct); one may compare this with our
example 92 in \cite{StG1}.

So, in $\mathfrak{Ab}$ we may draw diagrammatic fan-like depictions for
push-outs as well, like the above on pull-backs but "dual" to them, i.e. the
common vertex of all edges shall now be at the bottom, in a form like $%
\begin{array}{c}
\diagdown |\diagup%
\end{array}%
$. \textit{Such a diagrammatic depiction should at this phase be viewed as a
tool to summarize, hold together and overview some facts and arguments, such
as the statements of the last lemma, which we may now dualize in the
following one.}

Speaking of pull-backs and push-outs with corresponding fan-like diagrams,
we must point out that such a "fan"-diagram may in both cases even
degenerate to a single edge; we are also going to investigate this case -
but not even in the category $\mathfrak{Ab}$: From now on we shall be
considering \textbf{modules} instead, by letting a ring (/a $\Bbbk $%
-algebra, f.ex. a group ring) act on abelian groups (/on $\Bbbk $-modules) -
to begin with, generally in a category of (left) Artinian and Noetherian $R$%
-modules, $R$ being a ring with 1: For example, in a category of finitely
generated modules over an Artinian ring.

\bigskip We are now dualizing the above lemma:

\begin{lemma}
\label{Pof}a. In any fan-like push-out module diagram, i.e. of type $%
\begin{array}{c}
\diagdown |\diagup%
\end{array}%
$, in which vertices are virtually determined (\& not just up to
isomorphism), any proper subdiagram comprising a single edge (or any proper
subset of the set of edges), i.e. any "subfan", corresponds to a certain
(virtually determined) proper submodule, which cannot be obtained as a
factor module. This property holds also for any subdiagram of the given of
the same type (: any subfan) - i.e., comprising the common vertex at the
bottom end of the edges.

b. The sum of two such submodules realizes the subfan having as vertices the
union of the two corresponding sets of vertices. This is still true in the
case of "generalized diagrams" of this type (see remark \ref{ddep} below).
\end{lemma}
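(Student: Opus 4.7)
The plan is to mirror the argument for Lemma~\ref{pbf}, replacing each pull-back step by its dual. The key structural input is the one already recalled in the excerpt: in $\mathfrak{Ab}$ (and hence in our module category) a push-out of a family $S\to A_i$, $i\in I$, is obtained as a specific quotient of the coproduct $\bigoplus_i A_i$, identifying the images of $S$. Thus a virtually determined push-out fan of type $\begin{array}{c}\diagdown|\diagup\end{array}$, with common vertex $S$ at the bottom and top vertices $A_1,\ldots,A_n$, realizes the ambient module $M$ as the internal sum $\sum_i A_i$, where each $A_i$ is a \emph{specific} submodule (not merely an isomorphism class) containing the \emph{specific} submodule $S$.

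For part (a), I would consider any proper subset $J\subsetneq\{1,\ldots,n\}$ of the top vertices together with $S$. The corresponding subfan is realized by the submodule $N_{J}:=\sum_{i\in J}A_i$, which already contains $S$. Virtuality of the vertices secures that $N_{J}$ is set-theoretically determined inside $M$; the missing edges $k\notin J$ guarantee $N_{J}\subsetneq M$, since the virtual distinction of the $A_k$'s is exactly the dual of the independence which in Lemma~\ref{pbf}(a) forbade a proper subset of factors from recovering the whole. The assertion that $N_{J}$ \emph{cannot} be obtained as a factor module of $M$ is the literal dual of Lemma~\ref{pbf}(a): passing to a quotient would collapse the common bottom vertex $S$ and the chosen $A_i$'s together, rather than picking them out as submodules, so the same subfan would then describe a quotient of $M$, not a submodule. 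Iterating the argument on sub-push-outs inside $N_{J}$ should furthermore give the ``repeats itself'' character that the subfan property shares with its pull-back analogue.

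For part (b), once one has identified a subfan with the corresponding $N_{J}$, the verification is purely formal: $N_{J_1}+N_{J_2}=\sum_{i\in J_1}A_i+\sum_{j\in J_2}A_j=\sum_{k\in J_1\cup J_2}A_k=N_{J_1\cup J_2}$, which on the diagram side is exactly the subfan whose vertex set is the union. The extension to the ``generalized diagrams'' of remark~\ref{ddep} should run identically, because internal sums of submodules are insensitive to whether the fan arises from a single push-out or from an iterated push-out built from it.

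The main obstacle I anticipate is making precise what ``virtually determined'' actually buys in part (a): one needs to know that each $A_i$ is a genuine submodule of $M$ (and not just an abstract summand up to isomorphism) and that its intersection with $\sum_{j\neq i}A_j$ is exactly $S$ -- or, in the generalized case, the prescribed submodule sitting at the bottom vertex. This is the role played in the pull-back setting by the lemmata cited from~\cite{StG1}; here it must be replaced by the corresponding dual statements about push-outs of monomorphisms in $\mathfrak{Ab}$, and recording these carefully is the only real technical ingredient the proof actually requires.
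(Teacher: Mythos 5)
The paper does not actually prove Lemma~\ref{Pof}: it is introduced with \textquotedblleft We are now dualizing the above lemma,\textquotedblright\ and Lemma~\ref{pbf} is itself deferred to \cite{StG1}, so there is no in-text argument to compare yours against; your proposal must be judged on its own soundness. Your push-out decomposition $M=\sum_i A_i$ with the internal intersection condition $A_k\cap\sum_{i\ne k}A_i=S$ is correct (it holds automatically for a universal push-out of a family of monomorphisms), and it is enough for part~(b). It is \emph{not} enough for the second assertion of part~(a), and the sentence you offer there (\textquotedblleft passing to a quotient would collapse $S$ and the chosen $A_i$'s together\ldots so the same subfan would then describe a quotient of $M$, not a submodule\textquotedblright) is a paraphrase of the claim, not an argument. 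In fact the hypotheses your proof actually uses are satisfied by a counterexample: with $S=\mathbb Z/2$, $A_1=A_2=\mathbb Z/4$, the push-out $M=A_1\sqcup_S A_2\cong\mathbb Z/4\oplus\mathbb Z/2$ has both edges non-split and $A_1\cap A_2=S$, yet $A_1$ \emph{is} realized as the factor module $M/L$ with $L=\langle[1,0]-[0,1]\rangle$, and because $L\cap A_1=0$ this is a genuine virtual identification of type~f2 from Definition~\ref{VCat}. So an argument relying only on \textquotedblleft push-out of non-split extensions over $S$\textquotedblright\ would prove something false.

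What the hypothesis \textquotedblleft fan-like push-out module \emph{diagram}\textquotedblright\ (with virtually determined vertices) adds, and what any correct proof must invoke, is that the fan actually depicts $M$, so that $S$ is an essential submodule of $M$ --- in the non-generalized case $S=\mathrm{soc}\,M$ and the $A_i/S$ exhaust the head. In the example above the fan is not the diagram: $\mathrm{soc}\,M\cong\mathbb Z/2\oplus\mathbb Z/2\supsetneq S$. With essentiality in hand the claim closes immediately: for any $0\ne L\subseteq M$ one has $L\cap S\ne 0$, hence $L\cap N_J\ne 0$ for every $J$, so no f2-type identification of $M/L$ with $N_J$ is available, while $L=0$ forces $N_J=M$, contradicting $J$ proper. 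Equivalently, in the simple case every proper quotient of $M$ factors through the semisimple $M/S$, whereas each $N_J$ with $J\ne\varnothing$ contains the non-split two-layer $A_j$ (Remark~\ref{ens}). Your closing paragraph locates the danger in the wrong place: the independence $A_i\cap\sum_{j\ne i}A_j=S$ you worry about is automatic and does not rescue the argument; what is genuinely needed is the essentiality of $S$, the dual of \textquotedblleft the common vertex of the pull-back fan is the head of $U$\textquotedblright\ that you correctly carry over for everything else.
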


\bigskip

\begin{remark}
\label{ens}\bigskip Assume that such a fan-like diagram consists of $f+1$\
edges.\ The assertion that any non-empty, proper subset of them corresponds
to a quotient (resp., a submodule) but not to a submodule (resp., not a
quotient module) in particular implies that every edge represents a
non-split extension: To see this for an edge just apply that assertion to
the subset of all the others. Then we reasonably extend this demand to an
assumption, that we already have done from the beginning: Any diagram edge,
whether or not involved in a fan-type subdiagram, represents (realizes!)
some non-split virtual extension module (which is of course a subquotient of
the hosting module $M$); we shall delve deeper into this question in section
3.

It is easy to see that "splitness" (and, therefore also, non-splitness) goes
through the identified (submodules of) subquotients of the virtual category
without problem: Hence the notion of splitness is well defined in the
virtual category.
\end{remark}

\bigskip \bigskip

\begin{remark}
\label{ddep}Notice that the diagrammatic depictions in the preceding 2
lemmata, the previous remark the and even the following Criterion \ref{Crit}
are supposed to be virtual - but they need not be diagrams in the sense we
introduced in \cite{StG}: They are meant in a more general sense, inasmuch
as their vertices do \textit{not necessarily} represent \textit{simple}
subquotients. This kind of diagrams is of course less analytic but more
comprehensive, we shall therefore sometimes use them also in order to
characterize some "types" of diagrams, while we shall be referring to them
as "generalized diagrams". Notice that virtual diagrams in the usual sense
are also a kind of "generalized diagrams". In fact, any "generalized
diagram" represents (in the way we explain below) a sublattice of the
(visible part of) the lattice of submodules of the module $M$.

So, in what follows whenever we use the term "generalized" concerning a
virtual (sub)diagram, we shall mean one where also non-simple modules may be
represented by a vertex. This is actually utilized in the proof of the
existence theorem of a "centrally tuned" diagram (see \cite{StG}).
\end{remark}

\bigskip

\begin{definition}
We call the common vertex in a generalized diagram of the type of lemma \ref%
{pbf}, viewed as a subdiagram of a virtual diagram $D_{M}$\ of a module $M$,
its \textbf{(head) node}; we shall further call the outgoing (generalized)
edges \textbf{legs}, \textbf{if there are no paths} (of any length) \textbf{%
that join them further down on the diagram }$D_{M}$\textbf{\ with other
paths outgoing from the node.} Dually, in a generalized diagram of the type
of lemma \ref{Pof} we shall call the common vertex its \textbf{(basis) node}%
, while the outgoing generalized edges shall be called \textbf{arms}, if
there are no paths that join them with other paths outgoing from the basis
node higher up on the diagram $D_{M}$. Let a common designation for both
arms and legs be \textbf{limbs}. Nodes that are of both kinds above are
called \textbf{combined nodes}. If we cut the node (together with its edge)
off a limb, we get the corresponding \textbf{blunted limb}.
\end{definition}

\bigskip In this language we may summarize the two last lemmata in

\begin{lemma}
\label{comprL}A leg (or a number of legs, together making a "subfan" of the
fan) corresponds to a quotient, a blunted leg (or a set of such ones, out of
the same node) to a submodule.

An arm (or more of them, making out a "subfan")\ corresponds to a submodule,
a blunted arm (or a set of such ones, out of the same node) to a quotient
module.
\end{lemma}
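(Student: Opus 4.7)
The plan is to derive this essentially as a change of terminology, applying Lemmata \ref{pbf} and \ref{Pof} once the definitions of leg, arm, node, and blunted limb are unwound. The statement involves no new content beyond those two lemmata; its work is to translate them into the limb language just introduced, so the ``proof'' is organized around four cases matching the four clauses of the statement.

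First I would treat the legs. By the definition of leg, a single leg together with its head node is a one-edge pull-back-type subfan, and a collection of legs out of the same node, together with that node, is precisely what one calls a ``subfan'' of the pull-back fan centered at the node --- the condition that no paths further down rejoin sibling paths is exactly what guarantees that the subdirect-product structure at the node does restrict to that subcollection of legs. Lemma \ref{pbf}(a) then says that any such subfan containing the head vertex corresponds to a factor module of the module sitting at that node (hence to a quotient of $M$ when the node is the top of $D_M$), and Lemma \ref{pbf}(b) ensures that the same property is inherited by every subfan, proper or not.

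Next, for a blunted leg (or a set of blunted legs out of the same node), cutting off the head node from the relevant subfan leaves only the virtually determined lower-level vertices with their downward edges. These are the direct factors in the pull-back / subdirect product at the node, and by the ``virtuality'' clause of Lemma \ref{pbf}(a) their sum in $M$ is a proper submodule --- exactly the case of a proper subdiagram not containing the top vertex that the lemma explicitly asserts corresponds to a submodule (never a factor). For arms and blunted arms I would run the dual argument: a collection of arms out of a basis node, together with the node, is a push-out subfan, which by Lemma \ref{Pof}(a) corresponds to a virtually determined submodule --- indeed, by \ref{Pof}(b), the sum of the submodules carried by the individual arm endpoints. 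Blunting the basis node off such a subfan leaves the upper-level vertices, which again by \ref{Pof}(a) correspond to a quotient module rather than a submodule.

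The main obstacle, which is essentially bookkeeping, is verifying that the ``no joining paths'' clause in the definition of leg and arm really is the right local condition to guarantee that a subcollection of limbs about a common node inherits a bona fide pull-back (respectively push-out) subfan structure from the ambient virtual diagram $D_M$, so that Lemmata \ref{pbf} and \ref{Pof} apply to it verbatim and not only to the full fan originally considered there. Once this compatibility is granted --- and Remark \ref{ddep} about generalized diagrams is what legitimizes it --- the four clauses follow by direct substitution of terminology.
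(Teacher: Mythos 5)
Your reading is exactly right and matches the paper's own treatment: the paper introduces Lemma \ref{comprL} with the words ``In this language we may summarize the two last lemmata in\ldots'' and gives no separate proof, so the intended derivation is precisely the terminology translation of Lemma \ref{pbf} and Lemma \ref{Pof} that you carry out, with the ``no rejoining paths'' clause in the definitions of leg and arm playing the role you identify of licensing the transfer to subfans inside a larger diagram. You have, if anything, been more careful than the paper in spelling out the four cases and in flagging that the blunted-limb clauses are the contrapositive/complementary reading of \ref{pbf}(a) and \ref{Pof}(a), which is a legitimate unpacking rather than a deviation.
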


The condition we put on limbs, that they do not touch "the stem" again, is
strictly and absolutely necessary for the following expansion of lemma \ref%
{comprL}, as we are going to demonstrate in proposition \ref{ReD}.

\begin{definition}
We shall call such a (generalized) "fan-like" subdiagram \textbf{a
(generalized) fan}, downward or upward, accordingly.
\end{definition}

So lemma \ref{comprL} is amended to the following:

\begin{lemma}
\label{CL}In a generalized virtual module diagram, going down (by choosing
one or more legs at each head node) means taking quotients, going up (by
choosing one or more arms at each basis node) means taking submodules. That
means that in going down (: factoring out) we cannot "cleave" any upward
fan, in going up (: taking submodules) we may not cleave any downward fan -
where by "cleaving" it must be understood choosing a proper subfan (i.e. a
proper subdiagram including the node, so that it still be a fan).
\end{lemma}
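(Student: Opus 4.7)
The plan is to extend Lemma \ref{comprL} from a single fan to an arbitrary generalized subdiagram by successive application, organizing the argument as an induction on the number of nodes encountered along a traversal from the top (for the descending claim) or from the bottom (for the ascending claim).

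For the descending direction, I would begin at the topmost head node of the chosen subdiagram and iteratively apply the leg-case of Lemma \ref{comprL}: each choice of legs at a head node converts the virtual constituent sitting there into a (virtually determined) quotient of it, and since virtual constituents are themselves subquotients of $M$ (see Remark \ref{ens}), composing successive quotient operations yields a well-defined subquotient of $M$ whose status as a quotient of the starting piece is preserved. The dual argument, starting from the bottommost basis node and iterating the arm-case of Lemma \ref{comprL}, gives the ascending claim.

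The cleaving restriction is the more delicate half, and it is precisely where the limb condition (legs/arms do not re-meet the stem) does its work. If, while descending, we meet a basis node whose arms are among the vertices retained in the subdiagram, then by the arm-case of Lemma \ref{comprL} any proper subfan of those arms would correspond to a \emph{submodule} of the current virtual constituent. Interpolating such a submodule-choice into a sequence of quotient-choices is incompatible with the quotient nature of the descent; hence either all arms at that basis node must be kept, or none of them (equivalent to truncating the traversal there). The dual argument forbids cleaving downward fans while ascending. The limb condition guarantees that no arm of the cleaved fan sneaks back into the stem below, which would otherwise open a back-door identification and invalidate the dichotomy between "taking quotients" and "taking submodules".

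The main obstacle I foresee is not the combinatorics but the verification that composing two operations of the same polarity across virtual identifications produces a well-defined object of $\mathfrak{V}(M)$; that is, the identifications built into the virtual category must commute with successive pull-backs and push-outs of the elementary fan operations of Lemmata \ref{pbf} and \ref{Pof}. Once this compatibility is in place, the conclusion follows by stacking the single-fan conclusions of Lemma \ref{comprL} along the traversal, and the necessity of the limb condition is exactly what Proposition \ref{ReD} will confirm from the opposite direction.
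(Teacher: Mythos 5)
The paper actually offers no proof of Lemma \ref{CL}; it simply states it as an ``amendment'' of Lemma \ref{comprL}, and the preceding sentence defers the necessity of the limb condition to Proposition \ref{ReD}. Your proposal therefore fills a gap that the author left implicit, and it does so in the way the surrounding text evidently intends: iterate the single-fan statements of Lemmata \ref{pbf} and \ref{Pof} (packaged as Lemma \ref{comprL}) along a monotone traversal, using the polarity dichotomy (subfans of downward fans give quotients but never submodules; subfans of upward fans give submodules but never quotients) to rule out cleaving a fan of the wrong orientation mid-traversal. The one point you flag as a worry --- that successive quotient-steps across virtual identifications compose to give a well-defined object of $\mathfrak{V}(M)$ --- is indeed where the real content lies; the identifications of type f3 and type S in Definition \ref{VCat} are precisely what make ``quotient of a quotient is a quotient'' and ``submodule of a submodule is a submodule'' hold virtually, a point the paper uses only later, in the proof of Proposition \ref{StP}. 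You could tighten your cleaving argument by appealing directly to the ``never a submodule'' / ``never a quotient module'' clauses of Lemmata \ref{pbf}(a) and \ref{Pof}(a) rather than the somewhat vague phrase ``incompatible with the quotient nature of the descent,'' but the idea is right.
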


\bigskip\ \bigskip

The natural frame for our work on a given module $K$\ from our point of view
shall be its "virtual category"; we copy here our definition from \cite{StG}:

\begin{definition}
\label{VCat}Given a module $K$, we define its "virtual category" \bigskip $%
\mathfrak{V}\left( K\right) $, with objects all of $K$' s (submodules of)
subsections, identified as such (and NOT up to isomorphism!), on the class
of which we make identifications (also elementwise meant, through fixed
natural isomorphisms) of all pairs of the following types: \textbf{(i)} "of
type f2" $\left\{ \left( A+B\right) /B\text{,}\ A\diagup \left( A\cap
B\right) \right\} $, \textbf{(ii)} "of type f3" $\left\{ A/B\text{,}\
A/C\diagup B/C\right\} $,\ \textbf{(iii)} "of type p", meaning that, given a
canonical epimorphism $p:V\rightarrow V/W$, $V_{0}$, $W$\ submodules of V,
such that $res_{V_{0}}p$ is injective, identify\ $\left\{ V_{0}\text{,}\
p\left( V_{0}\right) \right\} $,\ and \textbf{(iv)} naturally isomorphic\
pairs "of type $s$" $\left\{ \left( \tbigoplus\limits_{i=1}^{n}M_{i}\right)
\diagup \left( \tbigoplus\limits_{i=1}^{n}S_{i}\right) \text{,}\
\tbigoplus\limits_{i=1}^{n}M_{i}/S_{i}\right\} $ of constituents of $K$, as
well as pairs "of type $S$", meaning the following: given subsections $A$, $B
$, $C$ of $K$, with $C\subset B\subset A$, $C$\ viewed as\ a submodule of $B$
is identified with $C$\ viewed as\ a submodule of $A$.\ As morphisms we
accept in this category the maps induced by module homomorphisms between the
virtual constituents.
\end{definition}

\bigskip

As especially interesting special cases of virtually identifiable pairs "of
type $S$" we first mention the case when $B$ is a direct summand of $C$ and,
as a special subcase of this, pairs of the form $\left\{ N\text{, }\varpi
\left( N\right) \right\} $, described by introducing the notion of a
"confinement" $\varpi \left( N\right) $ of a (suitable) subsection $N$, i.e.
some "minimally framed" isomorphic submodule-preimage through canonical
epimorphisms from direct sums, also to be called "confined preimages".

Now we will describe the notion of "confinement":

Let $M=\tbigoplus\limits_{i=1}^{n}M_{i}$ be a certain decomposition of the
section $M$ of a given module $K$\ as a direct sum of indecomposables with
each $M_{i}$ posessing a certain submodule $S_{i}$; consider the natural
isomorphism $\sigma :\left( \tbigoplus\limits_{i=1}^{n}M_{i}\right) \diagup
\left( \tbigoplus\limits_{i=1}^{n}S_{i}\right) \longrightarrow \
\tbigoplus\limits_{i=1}^{n}M_{i}/S_{i}$,\ and the canonical epimorphisms $%
\pi _{i}:M_{i}\longrightarrow M_{i}/S_{i}$. Further, for any subset $J$=$%
\left\{ j_{1},...,j_{s}\right\} $ of $\left\{ 1,...,n\right\} $ define as
well the canonical isomorphism $\sigma _{J}:\left( \tbigoplus\limits_{j\in
J}M_{j}\right) \diagup \left( \tbigoplus\limits_{j\in J}S_{j}\right)
\longrightarrow \tbigoplus\limits_{j\in J}M_{j}\diagup S_{j}$, the direct
sum of canonical epimorphisms$\ \pi _{J}:=\left( \pi _{j_{1}},...,\pi
_{j_{s}}\right) :\tbigoplus\limits_{j\in J}M_{j}\longrightarrow
\tbigoplus\limits_{j\in J}M_{j}/S_{j}$ and, finally, let $p_{J}$\ be the
usual direct sum projection corresponding to the index subset $J$, i.e. $%
p_{J}:\tbigoplus\limits_{i=1}^{n}M_{i}/S_{i}\twoheadrightarrow
\tbigoplus\limits_{j\in J}M_{j}/S_{j}$.

Assume, now, $N$ to be any submodule of $M\diagup \left(
\tbigoplus\limits_{i=1}^{n}S_{i}\right) $. Notice that, if we are \textit{%
"inside of a module }$K$\textit{"}, i.e. if $M$ is a section of a module $K$%
, then $M\diagup \left( \tbigoplus\limits_{i=1}^{n}S_{i}\right) $ and $%
\tbigoplus\limits_{i=1}^{n}M_{i}/S_{i}$ are being "virtually" (i.e., in the
virtual category $\mathfrak{V}\left( K\right) $ of $K$) identified (as pairs
"of type $s$"). $\ \ \ \ \ \ \ \ \ \ \ \ \ \ \ \ \ \ \ \ $

\begin{definition}
\bigskip Let $J$\ be the subset of $\left\{ 1,...,n\right\} $, minimal with
the property that $\sigma \left( N\right) $ is contained in $%
\tbigoplus\limits_{j\in J}M_{j}/S_{j}$ (equivalently, that $res_{N}\left(
p_{J}\circ \sigma \right) $\ be injective). We define "\textbf{the
confinement }of $N$" as, $\varpi \left( N\right) :=\left( \sigma
_{J}^{-1}\circ p_{J}\circ \sigma \right) \left( N\right) $\ \ \ \ (a
submodule of $\left( \tbigoplus\limits_{j\in J}M_{j}\right) \diagup \left(
\tbigoplus\limits_{j\in J}S_{j}\right) $).
\end{definition}

\bigskip In the following we shall just be using the notation $\varpi \left(
N\right) $\ for it, without further notification whenever the context is
clear. Take care of the important fact, that this preimage depends on the
choice of a decomposition $M=\tbigoplus\limits_{i=1}^{n}M_{i}$ into
indecomposables, or at least of the direct summand $\tbigoplus\limits_{j\in
J}M_{j}$, unless all the indecomposables $M_{i}$, $i=1,...,n$, are
non-isomorphic, in which case the definition still remains unambiguous even
without any further specification.\ Anyway, even to the extent that the
confinement of $N$ as a submodule of $M\diagup \left(
\tbigoplus\limits_{i=1}^{n}S_{i}\right) $ may be dependent on the particular
decompositions $\tbigoplus\limits_{i=1}^{n}M_{i}$\ and\ $\tbigoplus%
\limits_{i=1}^{n}S_{i}$, we are always going to use it in the context of
some given (/chosen) such decompositions. This, however, becomes\ really
relevant in the continuation of this present work into \cite{StG}.

\bigskip Notice that the direct sums that are relevant in our context here
are just ones that appear as sections of the module: general direct sums are
not definable in the Virtual Category of a module.

\bigskip

In the frame of the virtual category $\mathfrak{V}\left( M\right) $ of a
module $M$\ (see also \cite{StG}) we need a special ordering "$\lessdot $",
defined as follows:

\begin{definition}
\label{VO}In the frame of the virtual category $\mathfrak{V}\left( M\right) $
of $M$ define on the family of submodules of subquotients of $M$\ the
ordering "$\lessdot $", generated by the stipulation that "less than" mean
to be a submodule of a (sub)quotient of, in a non-split way (i.e. not as a
direct summand); we shall accordingly speak of "slimmer" or, on the
contrary, of "thicker" subquotients, especially referring to indecomposable
ones.
\end{definition}

\bigskip We shall occasionally also use the term \textit{"to enclose"} (or 
\textit{"be enclosed by"}) w.r.t. this ordering - and the symbol "$\widehat{=%
}$" to denote equality in this category - i.e. \textit{"in virtual terms"}.
A general term to descrive the objects of the virtual category $\mathfrak{V}%
\left( M\right) $ of $M$ shall be "\textbf{virtual constituents} of $M$".

\begin{definition}
For $0\leq i<j\,\,$, any indecomposable summand of the "radical section" $%
rad^{i}M/rad^{j}M$ shall be called "$\left\{ i,j\right\} $\textbf{-pillars}%
", of \textbf{height }(: Loewy length) at most $j-i$; the least possible of
such heights for the given pillar is its actual height.

A pillar with no other isomorphic pillars in the same radical section shall
be called a \textbf{single pillar.} An $\left\{ i,j\right\} $\textbf{%
-colonnade of rank r, r\TEXTsymbol{>}1,} is a maximal direct sum of \ $r$
isomorphic $\left\{ i,j\right\} $-pillars, the maximality referring to $r$,
in the sense that the sum is not properly contained in a direct sum of more
than $r$\ $\left\{ i,j\right\} $-pillars. A \textbf{specification} \textbf{%
of a colonnade} is any choice of specific pillars, a specification that is $%
\mathfrak{D}$-visible ($\mathfrak{D}$-manifest) is necessarily unique for $%
\mathfrak{D}$.

A pillar $B$ is "dominated" $\left( \leq \right) $ by another one $A$ (or we
may alternatively say that $A$ \textbf{overcoats} $B$),\ if there exists a
specification of each of them such that any pillar of the specification of
the first ($B$) is enclosed (that is, in the sense of "$\lessdot $") by a
pillar of the specification of the latter ($A$). In this ordering we shall
be epsecially interested in "maximally/minimally dominating pillars".
\end{definition}

\bigskip

\textbf{We shall say that an oriented graph }$D$\textbf{\ (without loops or
multiple edges) gives a virtual diagram for a module }$M$\textbf{\ if the
following conditions are satisfied:}

\begin{criterion}
\label{Crit}We demand the following of a virtual diagram $\mathfrak{D}$ for
a module $M$:

(i) Virtual Correspondence: All vertices are virtually determined - i.e., as
well-defined (simple) constituents in the virtual category $\mathfrak{V}%
\left( M\right) $\ of $M$ (i.e. specifically placed "inside $M$" \& not just
up to isomorphism).

(ii) Lemma \ref{CL} and remark \ref{ens} (in particular, vertices represent
non-split extensions) are satisfied \textbf{(hence also their consequences
that are demonstrated in the rest of this subsection)}.

(iii) \textit{By a subdiagram of }$D$\textit{\ we shall mean a subgraph of
it, with the same virtual correspondence of the remaining vertices. SATIETY:
There is no diagram satisfying (i) and (ii) of which }$\mathfrak{D}$\textit{%
\ is a proper subdiagram.} \ 

(iv) A "centrally tuned" diagram must be optimal, in the sense that it
realizes some specification of any maximal colonnade (see \cite{StG}).
\end{criterion}

\begin{definition}
"Realizability" of a connected subdiagram of a module diagram $\mathfrak{D}%
\left( M\right) $\ amounts to the existence of a submodule of a virtual
subquotient (i.e., existence of a virtual constituent) $Q$ of the module,
which corresponds to that subdiagram (as a whole) in the given virtual
diagrammatic depiction\ $\mathfrak{D}$. Conversely, $Q$\ shall then be
called a $\mathfrak{D}$-visible (or $\mathfrak{D}$-manifest) virtual
constituent.
\end{definition}

From now on we shall mainly be interested in and referring to centrally
tuned diagrams.

\begin{lemma}
\bigskip \bigskip\ If $C\subset B\subset A$ are virtual subfactors, then $%
A\diagup B\lessdot A\diagup C$.
\end{lemma}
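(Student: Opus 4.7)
The plan is to reduce the statement to a direct application of the virtual identifications built into $\mathfrak{V}(M)$, specifically the type-f3 identification. For the chain $C\subset B\subset A$, Definition \ref{VCat}(ii) identifies the pair $\{A/B,\ (A/C)/(B/C)\}$ in $\mathfrak{V}(M)$ via the canonical third-isomorphism-theorem map. So virtually $A/B\,\widehat{=}\,(A/C)/(B/C)$, and this realizes $A/B$ as a canonical quotient of $A/C$ with kernel $B/C$---hence in particular as a subquotient of $A/C$ in the virtual category.

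From here the conclusion $A/B\lessdot A/C$ is essentially the generating relation of Definition \ref{VO}: $A/B$ is (a submodule of) a subquotient of $A/C$. The only point requiring bookkeeping is the ``non-split'' clause. In the generic situation, $B/C$ is not a direct summand of $A/C$, and the canonical surjection $A/C\twoheadrightarrow (A/C)/(B/C)\,\widehat{=}\,A/B$ witnesses the non-split submodule-of-subquotient relation directly. In the degenerate case where $B/C$ \emph{does} split off $A/C$, a choice of complement gives $A/B$ as an isomorphic submodule of $A/C$, which again supplies the witness for the generating relation (via the trivial subquotient $A/C$ of itself). Together with the virtual identification, this yields $A/B\lessdot A/C$.

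The main (and essentially the only) obstacle is conceptual rather than computational: one must keep careful track of the identification-preserving way in which $A/B$, $(A/C)/(B/C)$, and $A/C$ are to be compared inside $\mathfrak{V}(M)$, and verify that the relation we produce actually falls under the generating clause of Definition \ref{VO}. Once that match is made explicit through the f3 identification, no further calculation is needed beyond the third isomorphism theorem.
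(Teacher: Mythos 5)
Your core argument is exactly the paper's: invoke the type-f3 identification to get $A/B\,\widehat{=}\,(A/C)/(B/C)$ in $\mathfrak{V}(M)$, so $A/B$ is a quotient of $A/C$, and conclude $A/B\lessdot A/C$ from the generating clause of Definition \ref{VO}. The paper's proof is exactly that one line and stops there.

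The extra material you add about the ``non-split'' clause is not in the paper, and the split-case branch as you wrote it does not actually close the gap: if $B/C$ were a direct summand of $A/C$, then the complement you exhibit is itself a direct summand, which is precisely what the non-split stipulation in Definition \ref{VO} excludes, so that complement cannot serve as ``the witness for the generating relation.'' In other words, your degenerate case, read against the literal definition, would show $A/B\not\lessdot A/C$ rather than $A/B\lessdot A/C$. The paper's own one-line proof similarly does not engage with that clause at all, so your main route is in step with the source; just be aware that the patch you attempted for the split case is not a patch, and if the clause really bites then both your proof and the paper's need an additional convention (e.g.\ that ``$\lessdot$'' is the reflexive-transitive closure including equalities) to cover it.
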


\begin{proof}
Because in the virtual category $A\diagup B=\left( A\diagup C\right) \diagup
\left( B\diagup C\right) $, i.e. $A\diagup B$ is a quotient of $A\diagup C$.
\end{proof}

\begin{proposition}
\label{StP}Given a module $M$\ and a virtual diagram $D_{M}$ of it, we may
get quotients by going down as in lemma \ref{CL}, thus truncating legs, and
then get to realizable submodule of that quotient by\ continuing reversely,
by going up, thus truncating arms; we would get to the same object of the
virtual category $\mathfrak{V}\left( M\right) $ by going first up, then down
- or even by shuffling steps up with steps down in any possible manner (but
toward the same final subdiagram, of course), as long as we end up in the
same subdiagram of $D_{M}$.
\end{proposition}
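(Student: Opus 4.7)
My strategy is to reduce the statement to a pairwise commutation principle: any single leg‑truncation commutes, in the virtual category $\mathfrak{V}(M)$, with any single arm‑truncation. Once this is secured, an elementary bubble‑sort argument shows that any two sequences of up/down steps ending at the same final subdiagram $\Delta$ yield the same object of $\mathfrak{V}(M)$.

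\textbf{Step 1 (normal form).} At each intermediate stage, Lemma \ref{CL} together with the type~$f3$ and type~$p$ identifications of Definition \ref{VCat} lets me write the current virtual constituent canonically as $A/B$ with $B\subset A\subset M$. A leg‑truncation replaces $A/B$ by $A/B'$ for some $B\subset B'\subset A$ (enlarging the denominator, \emph{i.e.} passing to a quotient), while an arm‑truncation replaces $A/B$ by $A'/B$ for some $B\subset A'\subset A$ (shrinking the numerator, \emph{i.e.} passing to a submodule). In both cases $A'$ and $B'$ are read off from the new subdiagram and are independent of the step that produced the current state.

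\textbf{Step 2 (pairwise commutation).} I compare the two orderings of one leg‑step at $\ell$ and one arm‑step at $\alpha$. The "down‑then‑up" route yields $(A/B')$ followed by the submodule $A'/B'$; the "up‑then‑down" route yields $A'/B$ followed by its quotient, which by the type~$f3$ identification $(A'/B)/(B'/B) = A'/B'$ is virtually the same constituent. The type~$p$ identification also enters when $B'$ is transported through the canonical projection $A\twoheadrightarrow A/B$ to extract its image. Hence the two orders commute in $\mathfrak{V}(M)$. Observing that leg‑truncations commute among themselves (the numerators accumulate by sums of socle‑pieces $S_\ell$, and sums do not depend on order) and arm‑truncations commute among themselves (denominators accumulate by intersections), the bubble‑sort argument yields a unique canonical form -- all down‑steps followed by all up‑steps -- determined entirely by $\Delta$.

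\textbf{Step 3 and main obstacle.} The delicate point is Step 2 when $\ell$ and $\alpha$ are not in disjoint regions of $D_M$, especially when they meet at a combined node or share intermediate vertices. I expect the main obstacle to be to verify that the fan hypothesis -- the condition emphasized before Lemma \ref{CL} that legs do not rejoin the stem further down, and dually for arms (to be exploited in Proposition \ref{ReD}) -- really does prevent the virtual denominators $B'/B$ and numerators $A/A'$ from "interfering" with one another, so that the $f3$‑identification above is legitimate. Once this is checked, the induction on the length of the step sequence closes the proof, and the same argument covers arbitrary shufflings of up‑ and down‑steps leading to the same $\Delta$.
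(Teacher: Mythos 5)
Your approach is genuinely different from the paper's and considerably more explicit. The paper argues, informally, that the blunted legs being factored out ``hang free'' from their head nodes, so that the virtual-category identifications of Definition~\ref{VCat} let one factor out their direct sum in a single step regardless of order, and dually for arms: there is no normal form and no commutation lemma in the paper's proof. Your strategy -- put each intermediate virtual constituent in the normal form $A/B$ with $B\subset A\subset M$, record a leg-truncation as enlarging $B$ and an arm-truncation as shrinking $A$, then reduce everything to the single type-$f3$ identity $(A'/B)/(B'/B)=A'/B'$ and a bubble-sort into all-downs-then-all-ups -- is a cleaner, lattice-theoretic rendering of the same intuition, and it actually anticipates how Lemma~\ref{ocSD} and the later discussion of the sublattice $\mathcal{L}_{D}(M)$ package these operations. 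What it buys is a reduction of the entire proposition to one explicit commutation.

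That said, the gap you flag in Step~3 is real and not discharged. You do not verify the two hypotheses on which your $f3$ identification rests: (i) that the modules $A'$ and $B'$ ``read off from the new subdiagram'' are indeed the same regardless of the order of moves (so that you are comparing the same pair along both routes), and (ii) that $B'\subset A'$, so that $B'/B$ is actually a submodule of $A'/B$. Both hinge on the structural constraint emphasized before Lemma~\ref{CL}, that limbs do not rejoin the stem, which is what guarantees that the socle piece you truncate downward cannot already have been removed by an upward truncation -- and this is exactly what Proposition~\ref{ReD} later makes precise. The paper's own proof does not isolate this step either (it is hidden in the phrase ``hanging freely''), but since your write-up promotes the commutation to the load-bearing step, it would need to be discharged for the argument to be complete; an explicit observation that a blunted leg is contained in every open subdiagram reachable before the step that truncates it would close the gap.
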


\begin{proof}
\bigskip It is quite clear from their definition that generalized legs and
arms are realizable subdiagrams.

As for blunted legs, they are "hanging free" and they represent submodules
of $M$.

Now the identifications that lay the fundament for the definition of the
virtual category (see \ref{VCat}) imply that \ in going down, when we are in
several steps actually factoring blunted legs out, that in a sense are
"hanging freely" from their head nodes, in the virtual category this amounts
to just factoring out their direct sum, just in one step!
\end{proof}

\bigskip Given a module $M$\ with a virtual diagram $D_{M}$,\ it is
reasonable to ask, also in view of last proposition, when a subdiagram is 
\textit{realizable}, meaning that it corresponds to an object of the virtual
category $\mathfrak{V}\left( M\right) $.

Now we are ready to prove the following:

\begin{proposition}
\label{ReD}Given a module $M$\ and a virtual diagram $\mathfrak{D}_{M}$ of
it, a connected subdiagram $D$\ of that is realizable if and only if there
exists no generalized (in the sense or remark \ref{ddep}) vertical (i.e.
monotone) path connecting two of its vertices at different radical layers
and containing at least one vertex outside $D$.
\end{proposition}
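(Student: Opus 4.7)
The plan is to establish both implications using the up/down-move machinery of Proposition \ref{StP} together with the fan constraints of Lemma \ref{CL}, proceeding by induction on the number of vertices of $\mathfrak{D}_M$ that lie outside $D$.

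For the \emph{if} direction, suppose every monotone path joining two vertices of $D$ is entirely contained in $D$. I would construct a realization of $D$ by a sequence of up/down moves starting from $M$ itself. The induction step requires locating, at each stage, a vertex $w$ outside $D$ that is \emph{peripheral}, in the sense that $w$ lies on some limb (leg or arm) at a node $n$ of the current diagram for which the entire blunted limb past $w$ also lies outside $D$. The convexity hypothesis guarantees such a peripheral vertex exists: if every $w \notin D$ had some $D$-vertex further along its limb, two such $D$-vertices on the same monotone branch would produce an external monotone path, contradicting the hypothesis. Truncating such a limb is a legitimate move by Lemma \ref{CL} because no fan containing $D$-vertices is cleaved, and the remaining realized constituent still contains all of $D$. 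After finitely many truncations the realized virtual constituent coincides with $D$, and Proposition \ref{StP} guarantees independence from the order of moves.

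For the \emph{only if} direction, suppose an external monotone path, say downward, $u = w_0, w_1, \dots, w_k = v$ joins two vertices $u, v \in D$ at different radical layers with some $w_i \notin D$. By Remark \ref{ddep} I may collapse the external stretch of the path into a single generalized vertex sitting on a leg at a head node $n$ of the $D$-side of $u$. Any realization of $D$ via Proposition \ref{StP} must at some stage execute a downward move that reaches the layer of $v$ through $n$; by Lemma \ref{CL} such a move requires retaining the full leg hosting that generalized vertex, and thus all $w_i$. Any attempt to cleave that leg — so as to discard the external $w_i$ while keeping $v$ — would cleave an upward fan whose basis node lies at (or below) $v$, contradicting Lemma \ref{CL}. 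The dual argument, swapping the roles of arms and legs, covers the upward case.

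The main obstacle I anticipate is making the "peripheral vertex" selection in the \emph{if} direction fully rigorous: one must show that convexity of $D$ inside $\mathfrak{D}_M$ really produces, at every non-trivial stage of the induction, a limb at some node whose post-$w$ part is disjoint from $D$. A natural candidate is the node of $\mathfrak{D}_M$ realizing a $D$-vertex of extremal radical layer adjacent (via a generalized edge) to an external vertex; the convexity then forbids any $D$-vertex further along that branch. Once this combinatorial lemma is in place, the remainder is a straightforward application of Proposition \ref{StP} and Lemma \ref{CL}, with the "only if" side essentially the contrapositive packaging of the same fan-cleaving obstruction.
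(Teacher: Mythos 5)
Your ``only if'' direction has a genuine gap. You argue that \emph{any} realization of $D$ produced via Proposition \ref{StP} ``must at some stage execute a downward move that reaches the layer of $v$ through $n$,'' and that cleaving the hosting leg there is blocked by Lemma \ref{CL}. But nothing in Proposition \ref{StP} forces a specific node $n$ or a specific order of moves to appear in every realizing sequence; the whole point of that proposition is that moves may be shuffled and routed quite freely. One could in principle reach $D$ via a sequence of truncations that never passes through the node you designate, so the obstruction you describe is not established to be unavoidable. The paper's own proof of this implication is structured quite differently and does not argue about move sequences at all: assuming $D$ is realized by some $N$, it first isolates the indecomposable summand $S$ of the appropriate radical section that carries the external path $T$ (with endpoints representing $A$, $B$), then finds the ``slimmest'' indecomposable $Q$ inside $N$ that encloses both $A$ and $B$, forms $\widetilde{N} = Q + S'$ with $S'$ a suitable preimage of $S$, and uses Lemmas \ref{pbf} and \ref{Pof} to show $Q$ must be simultaneously a quotient and a submodule of $\widetilde{N}$, hence a direct summand --- contradicting the non-splitness of the edges of $T$. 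This module-theoretic contradiction is what your ``fan-cleaving'' intuition gestures at, but your argument as stated does not deliver it; the contradiction must be located inside a concrete virtual constituent, not inferred from an undemonstrated claim about which moves every realization is obliged to make.

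Your ``if'' direction is closer in spirit to the paper's, but it is also cleaner in the paper: rather than inducting on the number of external vertices and hunting, at each stage, for a ``peripheral'' vertex to shave off (the combinatorial lemma you yourself flag as the obstacle), the paper first truncates to the relevant radical section and observes that $D$ together with all paths issuing from it forms a connected component of that section's diagram, hence is realizable as a direct summand $X$. It then clusters the outgoing paths into a few disconnected generalized legs and arms (possible precisely because no external monotone path returns to $D$) and applies Proposition \ref{StP} to $X$ in a bounded number of steps. This avoids the peripherality induction entirely and is the step you should adopt if you want to repair your argument.
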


\begin{proof}
\textbf{"}$\Longrightarrow $\textbf{":} \bigskip Let $l$\ denote the
"vertical length" of the subdiagram $D$; then we may actually restrict our
attention to the corresponding radical section $rad^{s}M\diagup rad^{s+l}M$\
of $M$.\ We may further make the following two assumptions:

(i) The only vertices in such a path $T$, of length $l\left( T\right) $,
that also belong to $D$, are its endpoints, and (ii) There exists no other
path of length less than $l\left( T\right) $, with the same property and
including at least one of $T$'s intermediate vertices: Because we can then
substitute $T$\ with another one, may be in several steps (for (ii)), having
these two properties.

Call $A$, resp. $B$, the two virtual irreducible subquotients of $M$, that
correspond to the endpoints of the path $T$, respectively lying, say, on the 
$\kappa $-th and $\left( \kappa +\lambda \right) $-th radical\ layer of $M$,
probably with $s+1\leq \kappa $, $\kappa +\lambda \leq s+l$, $\lambda \geq 2$%
\ .

We define as $S=S_{M}\left( T\right) $\ the indecomposable direct summand of
the appropriate (least) radical section $rad^{\kappa -1}M\diagup rad^{\kappa
+\lambda }M$\ of $M$, containing all the virtual simple subquotients
corresponding to the vertices of the path $T$\ from $A$ to $B$; notify the
fact that these indeed lie in the same such indecomposable summand (just
denoted $S$), or we would get a contradiction to the existence of the path $%
T $ and the non-splitness of the extensions represented by its edges. This $%
S $\ has on the one hand to be visible in $D_{M}$, as the \textit{uniquely
determined} such indecomposable summand "containing" a $D_{M}$-path, on the
other the subgraph $D_{S}$ of $D_{M}$,\ that represents it, must be a
connected component of the section of $D_{M}$\ corresponding to $rad^{\kappa
-1}M\diagup rad^{\kappa +\lambda }M$; but then assumptions (i) and (ii)
above imply that $D_{S}$ must have a generalized diagrammatic depiction of
the form: $S=%
\begin{array}{c}
A \\ 
| \\ 
K \\ 
| \\ 
B%
\end{array}%
$\ for some (non-simple, in general) subquotient $K$. \ \ 

Assume first that $D$\ is realizable by a subquotient $N$ and denote by $%
Q:=S_{N}\left( A,B\right) $\ the "slimmest" (i.e. the minimal according to "$%
\lessdot $") among those indecomposable subquotients of $N$, that are
thicker than (: "contain") both $A$\ and $B$; it is not difficult to see
that the family of the indecomposable subquotients of $N$, that are thicker
than both $A$\ and $B$, is inductively ordered with respect to the opposite
of "$\lessdot $": Because any totally ordered subset has to be finite, since
at any step of a (descending) chain at least one of its virtual composition
factors is taken away; but then we reach at a lower bound of the chain with
the last step. Hence there exists a minimal such indecomposable $Q$,
according to Zorn's Lemma, which then has to be unique, because it is
minimal (slimmest) as an indecomposable that is thicker than the virtuals $A$%
\ and $B$ (which in this case is easily seen, when combined with its
minimality, to simply mean that $A$\ must be a quotient and $B$ a submodule
thereof). \textit{Its uniqueness now guarantees its }$D_{M}$\textit{%
-visibility}. Then $Q=%
\begin{array}{c}
A^{\prime } \\ 
| \\ 
L \\ 
| \\ 
B^{\prime }%
\end{array}%
$\ \textit{(a generalized diagram)} for some subquotient $L$, where $A$\ is
contained in $Soc\left( A^{\prime }\right) $, $B$\ is contained in $Hd\left(
B^{\prime }\right) $. Assume that the radical span of $B^{\prime }$\ is from
the $\left( \kappa +\lambda \right) $-th to the $\left( \kappa +\lambda +\mu
\right) $-th radical\ layer of $M$. Take the preimage $S^{\prime }$ of $S$\
under the canonical epimorphism $rad^{\kappa -1}M\diagup rad^{\kappa
+\lambda +\mu }M\twoheadrightarrow rad^{\kappa -1}M\diagup rad^{\kappa
+\lambda }M$; define then the module $\widetilde{N}=Q+S^{\prime }$ (a sum of
submodules of $rad^{\kappa -1}M\diagup rad^{\kappa +\lambda +\mu }M$),\
which must then be indecomposable and have a generalized diagrammatic
depiction of the form $%
\begin{array}{ccc}
& A^{\prime } &  \\ 
\diagup &  & \diagdown \\ 
L_{{}}\frame{} &  & \frame{}_{{}}K \\ 
\diagdown &  & \diagup \\ 
& B^{\prime } & 
\end{array}%
$ in the sense of the two previous lemmata.

But then by successive application of the two lemmata \ref{pbf} \& \ref{Pof}
we get for the left "column" $%
\begin{array}{c}
A^{\prime } \\ 
| \\ 
L \\ 
| \\ 
B^{\prime }%
\end{array}%
$, corresponding to (: realized by) the virtual subquotient $Q$, that it
must be both, a quotient and a submodule of $\widetilde{N}$: So, $Q$ has to
be a direct summand of $\widetilde{N}$, a contradiction to the fact that
both $%
\begin{array}{l}
A^{\prime } \\ 
| \\ 
K%
\end{array}%
$\ and $%
\begin{array}{l}
K \\ 
| \\ 
B^{\prime }%
\end{array}%
$\ in the diagram are non-split. Therefore can $D$\ not be realizable.

\textbf{"}$\Longleftarrow $\textbf{":} \textit{Assume, conversely, that
there exists no such path }$T$\textit{.}

That implies that any path outgoing from a vertex of $D$\ does not return to 
$D$. Truncate the parts of these paths, that do not lie inside the radical
section of $M$ containing $D$. All of them together with $D$ generate a
diagram, which is clearly a connected component of the appropriate radical
section of $M$, hence it is realizable as a direct summand $X$ of that
section. But then we can cluster these downward, resp. upward directed paths
up into disconnected (outside $D$) thicker branches of two kinds: down-going
(generalized legs) and up-going (generalized arms). We then use proposition %
\ref{StP} to get to $D$\ as a virtual subquotient of $X$.
\end{proof}

\bigskip The next corollary refers to some definitions given in \cite{StG},
which we do not repeat here, since this corollary is not used anywhere else
in this article.

\begin{corollary}
\bigskip (i) Let $N$ be a module of simple head $A$\ and simple socle $B$,
with a virtual diagram that is a (possibly) composite $\left( m,l\right) $%
-shell ($m>1$, $l>1$) - or even a thick hybrid obtained via such a diagram
(see definitions in \cite{StG}).\ There can be no realizable proper
subdiagrams that contain both $A$\ and $B$.

(ii) If we have such a (hybrid, composite) shell subdiagram as in (i) in the
virtual diagram of a module $M$, so that its only vertices that also are
ends of edges outside the shell are $A$\ and $B$, also then there can be no
realizable proper subdiagrams that contain both $A$\ and $B$.
\end{corollary}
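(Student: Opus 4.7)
The plan is to reduce both statements to a single application of proposition \ref{ReD}. First I would extract from the definitions of $(m,l)$-shell and thick hybrid in \cite{StG} the structural fact that, since $m>1$ and $l>1$, every vertex $v$ of the shell lies on some monotone (strictly descending) generalized path from $A$ down to $B$ that is contained in the shell. This is essentially the hallmark of having more than one arm on each side: each interior vertex can be reached by an ascending path up to $A$ and by a descending path down to $B$, whose concatenation is a monotone $A$-to-$B$ path through $v$.

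For part (i), let $D'$ be any proper subdiagram of the shell containing both $A$ and $B$. Since $D'$ is proper, there is some vertex $v$ of the shell with $v\notin D'$. By the structural fact above, there is a monotone path $T$ from $A$ to $B$ in the shell passing through $v$; that is, $T$ is a monotone (generalized) path between two vertices of $D'$ (namely $A$ and $B$) which contains a vertex ($v$) outside $D'$. Proposition \ref{ReD} then immediately rules out realizability of $D'$.

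For part (ii), the shell now sits inside the larger virtual diagram $\mathfrak{D}_M$ of a module $M$, with only $A$ and $B$ shared with the rest of the diagram. The same monotone path $T$ constructed in (i) lies entirely within the shell, its endpoints $A,B$ are in $D'$, and it still passes through the missing vertex $v\notin D'$. Hence proposition \ref{ReD} again obstructs realizability; the additional edges at $A$ and $B$ and the rest of $\mathfrak{D}_M$ are irrelevant because the monotone obstruction is already internal to the shell. The restriction that $A$ and $B$ be the \emph{only} vertices of the shell that meet the rest of $\mathfrak{D}_M$ is what guarantees that the shell functions as a self-contained piece of $\mathfrak{D}_M$ with respect to the realizability criterion, so that ``proper subdiagram'' retains the unambiguous meaning it had in (i).

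The main obstacle I anticipate is not in the logic but in verifying the structural fact for \emph{all} the variants mentioned --- composite $(m,l)$-shells and the thick hybrids built from them --- which requires unpacking definitions given in \cite{StG} and not reproduced in the present excerpt. Once one has confirmed that every vertex of these configurations lies on an internal monotone $A$-to-$B$ path, both parts are immediate consequences of proposition \ref{ReD}.
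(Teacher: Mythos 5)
Your part (i) is essentially the paper's argument (the paper simply declares it "a direct consequence of the proposition"), with the merit that you isolate the one structural fact actually being used, namely that every vertex of a composite $(m,l)$-shell or thick hybrid lies on a monotone $A$-to-$B$ path inside the shell; neither you nor the paper verifies this from the definitions in \cite{StG}, so you are not worse off there, and your observation that subdiagrams are induced (hence a proper one must omit a vertex) is needed and correct. For part (ii) you take a genuinely different and more direct route: you apply proposition \ref{ReD} in the ambient diagram $\mathfrak{D}_M$, observing that the obstructing monotone path is already internal to the shell, whereas the paper first argues that the shell is realizable as a direct summand $V$ of the appropriate radical section of $M$ (this is where it uses the hypothesis that only $A$ and $B$ meet edges leaving the shell, making the shell a connected component of that section's subdiagram) and then invokes (i) inside $V$. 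What your route buys is economy and robustness: you never need the shell itself to be realizable, and in fact your argument does not use the "only $A$ and $B$ touch the outside" hypothesis at all, which suggests the corollary holds under weaker assumptions; what the paper's route buys is that it exhibits the shell as an honest virtual constituent $V$ before restricting attention to it, which is the more conceptual reduction and is arguably what the hypothesis in (ii) is there for. Your closing caveat is the right one: the only real work left in either approach is unpacking the shell/hybrid definitions from \cite{StG} to confirm the internal-path property.
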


\begin{proof}
\bigskip (i) is a direct consequence of the proposition. As for (ii), it is
clear that such a subdiagram corresponds to a direct summand $V$ of the
appropriate radical section of\ $M$, as it is a connected component of the
subdiagram corresponding to that section - i.e. it realizable; then use (i).
\end{proof}

\begin{corollary}
\bigskip There can be no subdiagram of the type $%
\begin{array}{cc}
A & \searrow _{{}} \\ 
\downarrow & _{{}}L \\ 
B & \swarrow ^{{}}%
\end{array}%
$\ contained in a virtual module diagram. The same remains true if we get to
a generalized subdiagram, by allowing $L$\ be even non-simple.
\end{corollary}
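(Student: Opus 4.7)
The plan is to derive a contradiction by applying the non-realizability criterion of Proposition \ref{ReD} to the subdiagram consisting of the alleged direct edge from $A$ to $B$. The configuration shown contains two paths between $A$ and $B$: the single edge on the left, and the two-step path through $L$ on the right. Taking the left edge as an isolated subdiagram $D=\{A,B\}$, I would argue that $D$ must be both realizable (because edges represent virtual extensions) and non-realizable (because the path through $L$ witnesses a disqualifying vertical excursion outside $D$).

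More concretely, first I would invoke Criterion \ref{Crit}(ii) together with Remark \ref{ens}: every edge of a virtual diagram realizes a non-split virtual extension, so the single-edge subdiagram $D$ consisting of the vertices $A$, $B$ joined by the direct edge is a realizable (connected) subdiagram of $\mathfrak{D}_M$. In particular, since this edge is non-split, the vertices $A$ and $B$ necessarily sit at distinct radical layers of $M$, as required by the hypotheses of Proposition \ref{ReD}.

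Next, I would exhibit the forbidden path. The arrows $A\searrow L$ and $L\swarrow B$ together furnish a monotone (vertical) path in $\mathfrak{D}_M$ from $A$ down to $B$ whose interior vertex $L$ lies outside $D$. This is exactly the situation ruled out by Proposition \ref{ReD}: such a path forces $D$ to be non-realizable, contradicting the previous paragraph. Hence no such subdiagram can exist.

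For the generalized version, where $L$ is allowed to be non-simple, no change is required: Proposition \ref{ReD} is stated precisely for \emph{generalized} vertical paths in the sense of Remark \ref{ddep}, so the same argument applies verbatim with $L$ a (possibly non-simple) vertex of a generalized subdiagram. I do not anticipate a genuine obstacle here; the only point worth checking carefully is that the two layer assumption on $A$ and $B$ is automatic from the non-splitness forced by Remark \ref{ens}, which is what allows Proposition \ref{ReD} to be invoked cleanly against the minimal subdiagram $\{A,B\}$.
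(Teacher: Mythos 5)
Your argument is correct, but it takes a genuinely different route from the paper's. The paper works directly with the fan lemmas (\ref{comprL}, \ref{CL}): it realizes the whole triangle as a connected component $N$ of a radical section, then observes that truncating the leg to $L$ realizes the single edge as a \emph{quotient} of $N$, while truncating the arm from $L$ realizes it as a \emph{submodule} of $N$; hence the edge (and therefore also $L$) would be a direct summand of $N$, contradicting the non-splitness required of diagram edges. You instead apply Proposition \ref{ReD} to $D=\{A,B\}$ (the single edge), exhibit the monotone path $A\to L\to B$ as the disqualifying excursion outside $D$, and derive non-realizability of $D$, contradicting Remark \ref{ens}/Criterion \ref{Crit}(ii). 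Your route is actually a bit leaner: you never need to realize the full triangle $N$, only the edge itself, and it parallels how the paper proves its first (shell) corollary as a direct consequence of \ref{ReD}. What the paper's version buys is self-containedness at this spot: the direct-summand contradiction is exhibited explicitly, without unwinding the proof of \ref{ReD} (which internally runs an analogous direct-summand argument on a larger diamond $\widetilde{N}$). Both handle the generalized-$L$ case the same way; the paper by noting that the triangle is still a connected component of a radical section, you by noting that \ref{ReD} is stated for generalized paths.
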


\begin{proof}
\bigskip By allowing $L$\ to be non-simple, if such a subdiagram existed, it
would have been realizable, as a connected component of a radical (or
socle!) series section - call that virtual subquotient $N$.\ The subdiagram $%
\begin{array}{c}
A \\ 
\downarrow \\ 
B%
\end{array}%
$\ has to be realizable, say as a virtual $C$, according to Criterion \ref%
{Crit}, (i). But then $C$\ must be both a quotient and a submodule of $N$,
implying that it is a direct summand of $N$, hence also that $L$\ is such
one, contrary to the meaning of the two edges attached to it.
\end{proof}

\bigskip Since there can be no subdiagrams of the type of the last
corollary, the proposition yields:

\begin{corollary}
A connected subdiagram of radical length 2 is always realizable.
\end{corollary}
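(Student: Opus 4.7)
The plan is to apply Proposition~\ref{ReD} directly. Let $D$ be a connected subdiagram of radical length $2$. Since $D$ is connected and edges of a virtual diagram only join adjacent radical layers of $M$, the vertices of $D$ must lie on two consecutive layers of $M$, say layers $i$ and $i+1$ (a connected subdiagram cannot skip an intermediate layer). I would then pick arbitrary vertices $A,B\in D$ at different layers, $A$ on layer $i$ and $B$ on layer $i+1$.

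To meet the hypothesis of Proposition~\ref{ReD} I must verify that no generalized monotone path in the ambient diagram of $M$ joins $A$ to $B$ through a vertex outside $D$. The key observation is that any monotone path of length at least $2$ descends through strictly more than one radical layer: each regular edge costs one layer, and inserting a non-simple generalized intermediate vertex $L$ (in the sense of remark~\ref{ddep}) of Loewy length $\ell\geq 1$ costs an additional $\ell$ layers on top of the two flanking edges. Since $A$ and $B$ are separated by exactly one radical layer, any monotone path between them must be a single direct edge $A\to B$, which carries no intermediate vertex at all.

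The only residual configuration to worry about is a ``triangle'' where the direct edge $A\to B$ coexists with a parallel generalized path $A\to L\to B$ through an outside vertex $L$; but this is precisely the shape ruled out by the previous corollary. Hence the offending path of Proposition~\ref{ReD} cannot exist, and its hypothesis is vacuously satisfied, so $D$ is realizable. The most delicate step, and the only place I would take real care, is the Loewy-length bookkeeping in the generalized case: once one is convinced that any non-simple generalized intermediate vertex forces a monotone path to span at least two radical layers, the argument collapses to a one-line appeal to Proposition~\ref{ReD} and the preceding corollary.
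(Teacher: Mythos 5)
Your proof is correct and follows essentially the same route as the paper: the paper likewise derives the corollary from Proposition~\ref{ReD} together with the preceding corollary ruling out the triangle configuration $A\rightarrow L\rightarrow B$ alongside a direct edge $A\rightarrow B$. Your added layer-counting observation (that a monotone path with an intermediate vertex must descend at least two radical layers) is a reasonable explicit justification of what the paper leaves implicit.
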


Given two subdiagrams $\mathfrak{D}_{A}$, $\mathfrak{D}_{B}$ of $\mathfrak{D}%
_{M}$, we shall understand as their "union" the subdiagram of $\mathfrak{D}%
_{M}$, that consists of the two but also \textit{including any single edges
joining vertices of the one to vertices of the other.} 

\begin{corollary}
\label{vsce}\bigskip In the notation of the proposition, let there be given
two realizable subdiagrams $\mathfrak{D}_{A}$, $\mathfrak{D}_{B}$ of $%
\mathfrak{D}_{M}$ (with the two former realizing the virtual contituents $A$
and $B$), whose overlap is non-empty and which are realized by the virtual
constituents $A$ and $B$\ of $M$. Call $\mathfrak{D}_{K}$  the intersection
and $\mathfrak{D}_{C}$ the union of the two subdiagrams $\mathfrak{D}_{A}$, $%
\mathfrak{D}_{B}$. Then $\mathfrak{D}_{K}$ is realizable - say, by $K$. If
we further assume for the two subdiagrams, that there exist no generalized
(in the sense of remark \ref{ddep}) monotone paths outside them (i.e., going
through some simple virtuals, that do not lie in their union) joining $%
\mathfrak{D}_{A}$ and $\mathfrak{D}_{B}$, then $\mathfrak{D}_{C}$ is also
realizable by some virtual constituent $C$ of $M$, on the condition that one
of the following holds: (i) The overlap $\mathfrak{D}_{K}$ of $\mathfrak{D}%
_{A}$ and $\mathfrak{D}_{B}$ is non-empty, (ii) the union $\mathfrak{D}_{C}$
contains at least one new edge (i.e., not contained in either $\mathfrak{D}%
_{A}$ or $\mathfrak{D}_{B}$) or, finally, (iii) $A$ and $B$ may be viewed as
direct summands of some virtual constituent of $M$.

It is then clear that $K$ is the common enclosure of $A$ and $B$, for which
we shall be writing $A\wedge B$.\ We shall further call the virtual object $C
$ just defined the \textbf{virtual sum} of $A$ and $B$ - and write $A%
\widehat{+}B$ or $A\vee B$; i.e., $A\widehat{+}B=C$.

On the other hand, if such a monotone path outside the two realizable
subdiagrams existed, then their union, viewed as a subdiagram of $\mathfrak{D%
}_{M}$, is not realizable. \ \ \ \ \ \ \ 
\end{corollary}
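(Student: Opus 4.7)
The plan is to reduce both realizability claims to Proposition \ref{ReD}, and then to use Proposition \ref{StP} to identify the realizing virtual objects as lattice meet and join.

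First I would establish realizability of $\mathfrak{D}_K$. Assume toward contradiction that a generalized monotone path $T$ connects two vertices $u,v$ of $\mathfrak{D}_K$ through a vertex $w$ lying outside $\mathfrak{D}_K$. Since $w\notin \mathfrak{D}_A\cap\mathfrak{D}_B$, at least one of $\mathfrak{D}_A$, $\mathfrak{D}_B$ (say $\mathfrak{D}_A$) does not contain $w$; but then $T$ is a generalized monotone outside path between two vertices of the realizable subdiagram $\mathfrak{D}_A$, contradicting Proposition \ref{ReD} for $\mathfrak{D}_A$.

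Next I would prove realizability of $\mathfrak{D}_C$ under the stated hypotheses. Suppose a generalized monotone path $v_0\to v_1\to\cdots\to v_n$ between two vertices of $\mathfrak{D}_C$ has at least one intermediate vertex outside $\mathfrak{D}_C$. Pick a maximal consecutive block $v_a,\ldots,v_b$ of such outside vertices; then $v_{a-1},v_{b+1}\in\mathfrak{D}_A\cup\mathfrak{D}_B$. If both endpoints of the block lie in $\mathfrak{D}_A$ (respectively $\mathfrak{D}_B$), the sub-path contradicts the realizability of $\mathfrak{D}_A$ (resp.\ $\mathfrak{D}_B$). The only remaining possibility places $v_{a-1}\in\mathfrak{D}_A\setminus\mathfrak{D}_B$ and $v_{b+1}\in\mathfrak{D}_B\setminus\mathfrak{D}_A$ or vice versa, which is exactly the type of outside generalized monotone path joining $\mathfrak{D}_A$ and $\mathfrak{D}_B$ that the extra hypothesis excludes. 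Connectedness of $\mathfrak{D}_C$ follows from hypothesis (i), from the bridging edge of (ii), or, in case (iii), from the type-$s$ identification in Definition \ref{VCat}, which allows $A\oplus B$ to be viewed as the virtual realization of the otherwise disjoint union.

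To identify $K$ with $A\wedge B$ and $C$ with $A\widehat{+}B$, I would use Proposition \ref{StP}. Since $\mathfrak{D}_K\subseteq\mathfrak{D}_A$ are both realizable, that proposition presents $K$ as obtainable from $A$ by a sequence of upward/downward passages within $\mathfrak{D}_A$, so $K\lessdot A$; symmetrically $K\lessdot B$. Any putative common lower bound $K'$ with $K'\lessdot A$ and $K'\lessdot B$ is realized by a subdiagram contained in both $\mathfrak{D}_A$ and $\mathfrak{D}_B$, hence in $\mathfrak{D}_K$, yielding $K'\lessdot K$ and the meet property. The dual argument on $C$, using $\mathfrak{D}_A,\mathfrak{D}_B\subseteq\mathfrak{D}_C$, gives the join property. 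The last assertion, that $\mathfrak{D}_C$ is not realizable once an outside monotone path joining $\mathfrak{D}_A$ and $\mathfrak{D}_B$ exists, is an immediate application of Proposition \ref{ReD} to $\mathfrak{D}_C$, since that path is precisely the obstruction the proposition rules out. The main obstacle I anticipate is case (iii): when $\mathfrak{D}_A$ and $\mathfrak{D}_B$ are disjoint with no bridging edges, the connectedness hypothesis of Proposition \ref{ReD} fails, so realizability must be read off from the direct-summand structure via the type-$s$ identifications in $\mathfrak{V}(M)$; verifying carefully that the resulting $A\oplus B$ really does correspond to $\mathfrak{D}_C$ inside $\mathfrak{D}_M$ and meshes with the meet/join identifications of the preceding step is the one place that requires genuine care beyond routine diagram chasing.
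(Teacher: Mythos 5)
The paper actually supplies no proof for this corollary: it is stated immediately after Proposition~\ref{ReD} with no \emph{Proof} environment, and the identifications of $K$ with $A\wedge B$ and of $C$ with $A\,\widehat{+}\,B$ are, in the author's own wording, \emph{definitions} ("we shall be writing $A\wedge B$," "we shall further call \dots the \textbf{virtual sum}"). So there is no paper argument to compare yours to; you are filling a gap the author left implicit, and reducing everything to the two directions of Proposition~\ref{ReD} together with a block-decomposition of the offending monotone path is the natural route. Your treatment of the union $\mathfrak{D}_C$ in cases (i) and (ii), and the direct application of the only-if direction of Proposition~\ref{ReD} for the final non-realizability assertion, are sound; you also correctly isolate case (iii) as the delicate one.

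There is, however, a genuine gap in the part you treat first. Proposition~\ref{ReD} is stated for \emph{connected} subdiagrams, and the intersection $\mathfrak{D}_K$ of two connected realizable subdiagrams need not be connected (two vertices on the same radical layer can sit in $\mathfrak{D}_A\cap\mathfrak{D}_B$ with no induced edge between them and no common ancestor or descendant inside the intersection). Your outside-path argument correctly shows that no generalized monotone path through a vertex off $\mathfrak{D}_K$ can join two vertices of $\mathfrak{D}_K$, but passing from that to "realizable via Proposition~\ref{ReD}" silently assumes $\mathfrak{D}_K$ is connected; if it is not, you must either prove connectedness from the realizability of $\mathfrak{D}_A$ and $\mathfrak{D}_B$, or invoke exactly the same extension of realizability to disconnected subdiagrams (via the type-$s$ identification as a direct sum) that you flag as the obstacle in case (iii). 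That step cannot be absorbed into the citation of Proposition~\ref{ReD} without comment. A subsidiary remark: the appeal to Proposition~\ref{StP} to deduce $K\lessdot A$ from $\mathfrak{D}_K\subseteq\mathfrak{D}_A$ both being realizable overreads that proposition, which only says the end object is independent of the order of up/down steps once a chain is given, not that every realizable subdiagram of a realizable one arises from such a chain; but since the meet/join identifications are definitions rather than claims, this does not affect the substance of the corollary.
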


\bigskip We further extend the definition of virtual sum to the case when
there (apparently...) is no overlap of the realizable subdiagrams $\mathfrak{%
D}_{A}$, $\mathfrak{D}_{B}$ of $\mathfrak{D}_{M}$, but their corresponding
virtual constituents $A$ and $B$ may also be considered as submodules of the
same constituent: then we define again the virtual sum $A\widehat{+}B$ as
the virtual object corresponding to the sum of the submodules in the virtual
classes of $A$ and $B$. Remark that there may not exist any monotone paths
outside them joining $\mathfrak{D}_{A}$ and $\mathfrak{D}_{B}$, since that
would easily give a contradiction to lemma 1a for the submodule property of
the virtual to the higher end of such a path. Notice further that, also in
this case, in some sense the intersection of the subdiagrams $\mathfrak{D}%
_{A}$ and $\mathfrak{D}_{B}$ is "trivial" (corresponding to their common,
also virtually, trivial submodule) but non-empty, a point of view that
allows us to subdue this to the general case too. Corresponding to this
point of view, we shall in this case define $A\wedge B$ to be the
appropriate virtual trivial module (i.e., the one that may be considered as
a submodule of $A$ and $B$). \ \ \ 

With this extension of the definition of virtual sum, we may now further
observe that also in the case that the $\mathfrak{D}_{M}$-realizable virtual
constituents $A$ and $B$ may be considered as submodules of the same
constituent with some non-trivial intersection, their usual sum as such
ones, respectively their intersection, coincides with the just given
definition of virtual sum, respectively their common enclosure - so that
both new definitions may be viewd as generalizations. Especially
interesting, as an easy exercise, is to see how these new definitions apply
on subfans of fans (representing $\mathfrak{D}_{M}$-realizable virtual
constituents). Remark that $A\widehat{+}A\widehat{=}A$\ for any $\mathfrak{D}%
_{M}$-realizable virtual constituent $A$.\ \textit{Whenever the context is
clear and there is no risk of misunderstanding, we may also just write }$+$%
\textit{\ and }$=$\textit{, rather than }$\widehat{+}$\textit{\ and }$%
\widehat{=}$\textit{.}

The following should now be quite clear:

\begin{lemma}
If the virtual constituents $A$ and $B$\ of $M$ are $\mathfrak{D}_{M}$%
-visible, say through the subdiagrams $\mathfrak{D}_{A}$ and $\mathfrak{D}%
_{B}$, but they may neither (by virtual-class representatives) be considered
as submodules of the same virtual constituent nor the common enclosure of
their diagrams $\mathfrak{D}_{A}$ and $\mathfrak{D}_{B}$ is non-empty, then
the union of the diagrams $\mathfrak{D}_{A}$ and $\mathfrak{D}_{B}$ is not
realizable.
\end{lemma}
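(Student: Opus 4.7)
The approach is a proof by contradiction: assume the union $\mathfrak{D}_C:=\mathfrak{D}_A\cup\mathfrak{D}_B$ is realized by some virtual constituent $V$ of $M$, and derive a contradiction with the first clause of the hypothesis---namely, that $A$ and $B$ cannot be taken as submodules of a common virtual constituent of $M$. As a preliminary step I would translate the emptiness of the common enclosure into a combinatorial statement: any vertex lying in both $\mathfrak{D}_A$ and $\mathfrak{D}_B$ would itself realize a non-trivial virtual simple contained in both realizations, hence furnish a non-empty common enclosure; so $\mathfrak{D}_A$ and $\mathfrak{D}_B$ must be vertex-disjoint.

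The core of the argument is the \emph{disconnected} case, in which no edge of $\mathfrak{D}_M$ joins a vertex of $\mathfrak{D}_A$ to one of $\mathfrak{D}_B$. In this situation $\mathfrak{D}_C$ is the disjoint union of two components, each with its own $\mathfrak{D}_M$-visible realization. Any virtual constituent $V$ realizing this disjoint union must, through the type-$s$ identification in Definition \ref{VCat}, virtually decompose as $V\widehat{=}V_A\oplus V_B$, with $V_A$ realizing $\mathfrak{D}_A$ and $V_B$ realizing $\mathfrak{D}_B$. The $\mathfrak{D}_M$-visibility of $A$ and $B$ (i.e.\ the uniqueness of their specification as $\mathfrak{D}_M$-manifest virtual constituents) then forces $V_A\widehat{=}A$ and $V_B\widehat{=}B$; so $A$ and $B$ are both direct summands---in particular, submodules---of the single virtual constituent $V$, contradicting the hypothesis.

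The remaining case is that some edge of $\mathfrak{D}_M$ joins a vertex of $\mathfrak{D}_A$ to one of $\mathfrak{D}_B$, so that $\mathfrak{D}_C$ is connected. Here I would invoke corollary \ref{vsce}(ii): the very realizability of $V$, by proposition \ref{ReD}, rules out any generalized monotone path outside $\mathfrak{D}_C$ joining $\mathfrak{D}_A$ to $\mathfrak{D}_B$, so the hypotheses of that corollary are satisfied. Proposition \ref{StP} then permits one to carve out, by a suitable sequence of up-truncations inside $V$, a virtual constituent whose diagram places both $A$ and $B$ in submodule position, again contradicting the first clause of the hypothesis. The main obstacle is precisely this connected case: one has to track the orientations of the joining edges and the admissible up/down navigations of proposition \ref{StP} carefully enough to guarantee that $A$ and $B$ can both be extracted as genuine submodules (rather than one of them being forced into a rigid quotient position) of the resulting common constituent. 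The disconnected case, by contrast, is a direct consequence of the type-$s$ identification together with the uniqueness built into $\mathfrak{D}_M$-visibility.
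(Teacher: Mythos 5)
The paper itself offers no proof of this lemma---it is prefaced only by ``The following should now be quite clear,'' so it is meant to be read off from the surrounding discussion around Corollary~\ref{vsce} and its extension to the overlap-free case. Your disconnected case is therefore the substance of the argument, and it is essentially correct: if $\mathfrak{D}_C$ has two components then any virtual constituent corresponding to it must (by the type-$s$ identifications) virtually split as a direct sum whose summands, by uniqueness of the $\mathfrak{D}_M$-realization of a subdiagram, are $A$ and $B$ themselves; hence $A$ and $B$ sit as submodules of a common constituent, contrary to the hypothesis.

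Where your proposal goes wrong is the ``connected case.'' If there is a joining edge from a vertex $a\in\mathfrak{D}_A$ down to a vertex $b\in\mathfrak{D}_B$, then, in any constituent $V$ realizing $\mathfrak{D}_C$, $\mathfrak{D}_A$ sits in a closed subdiagram (hence quotient) position and $\mathfrak{D}_B$ in an open subdiagram (hence submodule) position relative to the edge. No sequence of up/down moves licensed by Proposition~\ref{StP} can reverse that: an up-move retains a submodule, a down-move a quotient, and you must terminate at a subdiagram which either places $A$ above $B$ or forgets one of them. For instance, take $V$ uniserial of length two with head $A$ and socle $B$, $\mathfrak{D}_A=\{A\}$, $\mathfrak{D}_B=\{B\}$: here $B$ is a submodule of $V$, $A$ is only a quotient, and (in a minimal such $M$) there is no constituent having both as submodules---yet $\mathfrak{D}_C$, edge included, is realized by $V$. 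So you cannot hope to ``extract $A$ and $B$ as genuine submodules'' in general, and you in fact acknowledge this as ``the main obstacle'' without resolving it. Your use of Corollary~\ref{vsce}(ii) also cuts the wrong way: that clause is precisely a sufficient condition for realizability of $\mathfrak{D}_C$, not a lever for non-realizability.

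The resolution is not to argue harder in the connected case, but to observe that under the lemma's hypotheses the connected case does not genuinely arise: once $\mathfrak{D}_A$ and $\mathfrak{D}_B$ are vertex-disjoint, the union intended here is the disjoint one, and the existence of joining edges is exactly the situation that Corollary~\ref{vsce}(ii) already treats as a separate, realizable case outside the lemma's scope. You should either state this explicitly (the lemma is the negation of the realizability criteria built up in Corollary~\ref{vsce} and its overlap-free extension, and the missing clause (ii) there should be read as excluded here), or note the ambiguity in the lemma's wording rather than attempting a proof for a case that, as your own tentativeness signals, cannot be closed by the tools you invoke.
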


After this preparation we proceed to also define some formal objects $A%
\widehat{+}B$ even in the case of the preceding lemma, or in the last case
of corollary \ref{vsce}, although this object is no virtual constituent of $M
$, i.e. not any object of the category $\mathfrak{V}\left( M\right) $. To
this end, we stipulate that also the formal object $A\widehat{+}B$ shall
"correspond" to the union of the subdiagrams $\mathfrak{D}_{A}$ and $%
\mathfrak{D}_{B}$. Take account of the important remark that, it is very
well possible, that we have some $\mathfrak{D}_{M}$-visible virtual
constituents $A_{i}$, $i=1,2,3$, of $M$, with\ $A_{1}\widehat{+}A_{2}$ not
being a virtual object, all the while$\ A_{1}\widehat{+}A_{2}\widehat{+}%
A_{3} $\ is!\ \ 

Thus are we lead to define an abelian group $\left( \mathfrak{A}\left( 
\mathfrak{D}_{M}\right) \text{, }\widehat{+}\right) $, with respect to this
extended virtual sum, hence to be called the virtuality group of the virtual
diagram $\mathfrak{D}_{M}$ of $M$, whose formal generators are all the $%
\mathfrak{D}_{M}$-visible virtual constituents of $M$, with relations all
possible relations $A\widehat{+}B=C$, where $A$, $B$ and\ $C$ are $\mathfrak{%
D}_{M}$-visible constituents of $M$.

Then any element of $\mathfrak{A}\left( \mathfrak{D}_{M}\right) $ is by
definition a finite "$\widehat{+}$"-sum $\sum_{i}A_{i}$\ of some $\mathfrak{D%
}_{M}$-visible virtual constituents $A_{i}$ of $M$; it is an easy routine
procedure to prove, that this may in a unique way be written as a sum of a
minimal number of virtual constituents (in our case probably meaning the
same as $\mathfrak{D}_{M}$-realizable virtual constituents), which shall
then be called \textbf{the reduced form} of the sum; call that minimal
number \textbf{the reduced length} of the given element of $\mathfrak{A}%
\left( \mathfrak{D}_{M}\right) $.

All this discussion makes it quite clear that the following lemma is true:

\begin{lemma}
An element of $\mathfrak{A}\left( \mathfrak{D}_{M}\right) $\ corresponds to
a virtual (and $\mathfrak{D}_{M}$-visible) constituent of $M$, if and only
if its reduced length is 1. \ 
\end{lemma}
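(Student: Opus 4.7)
The plan is to reduce the lemma to the definitions of the virtuality group $\mathfrak{A}(\mathfrak{D}_M)$ and of the reduced form introduced just before the statement; both directions are then essentially unpackings of the definitions, once uniqueness of the reduced form has been secured.

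First I would dispatch the direction ($\Longleftarrow$): if $x \in \mathfrak{A}(\mathfrak{D}_M)$ has reduced length $1$, then by definition $x$ is expressible as a single generator $A$, and the generators of $\mathfrak{A}(\mathfrak{D}_M)$ are precisely the $\mathfrak{D}_M$-visible virtual constituents of $M$. Hence $x$ corresponds to the virtual constituent $A$.

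Next, for the converse ($\Longrightarrow$), suppose $x$ corresponds to a $\mathfrak{D}_M$-visible virtual constituent $C$. Then $C$ itself is a formal generator, and $x = C$ gives a one-term expression for $x$. Therefore the minimum number of summands in any expression of $x$ is at most $1$; excluding the virtually trivial element (whose reduced length is $0$ and which is not a bona fide virtual constituent in the present sense), the reduced length must equal exactly $1$.

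The real work of the argument --- hence the main obstacle --- lies in the well-definedness and uniqueness of the reduced form, alluded to as ``an easy routine procedure'' just before the lemma. To make this precise, one must verify that every element of $\mathfrak{A}(\mathfrak{D}_M)$ admits a minimal-length expression in generators and that any two such minimal expressions agree as multisets of $\mathfrak{D}_M$-visible virtual constituents. The natural approach is induction on the minimal length, using that each defining relation $A \widehat{+} B = C$ strictly changes the length by one, so two distinct minimal expressions could not be interconverted by applying relations without passing through an intermediate expression of strictly smaller length --- contradicting minimality. Once this uniqueness is in hand, the equivalence stated in the lemma follows at once.
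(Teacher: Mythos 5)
The paper provides no written proof of this lemma; it is stated directly after the sentence ``All this discussion makes it quite clear that the following lemma is true,'' so your unpacking must be judged against that implicit, definitional argument. Your ($\Longleftarrow$) direction is exactly that argument: a length-$1$ expression is a single generator of $\mathfrak{A}(\mathfrak{D}_M)$, and the generators are by definition the $\mathfrak{D}_M$-visible constituents. Your ($\Longrightarrow$) direction likewise matches the paper's evident intent, which reads ``corresponds to a (visible) virtual constituent'' as ``equals a single generator in the group.''

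Two cautions, though. First, you single out the well-definedness and \emph{uniqueness} of the reduced form as ``the real work''; this is misplaced for the present lemma. It invokes only the reduced \emph{length}, i.e.\ the minimum number of summands over all expressions of the group element, which is automatically a well-defined integer. The stronger claim that the minimal-length multiset of summands is itself unique is used by the paper elsewhere but plays no role in the equivalence above. Second, the paper earlier stipulates that \emph{every} formal sum $A\widehat{+}B$ ``corresponds'' to a union of subdiagrams, so under a literal reading of the hypothesis in ($\Longrightarrow$) the element $x=A_1\widehat{+}\cdots\widehat{+}A_k$ corresponding to a realizable subdiagram $\mathfrak{D}_C$ would still need to be shown equal to the generator $C$ via the binary relations $A\widehat{+}B=C$ — a genuinely nontrivial step, precisely the delicacy the paper flags with its remark that $A_1\widehat{+}A_2$ may fail to be virtual while $A_1\widehat{+}A_2\widehat{+}A_3$ is. Since the paper treats the lemma as immediate, it presumably discounts this reading; your proof reproduces that tacit choice, but it is worth being aware that the stated lemma is not as innocuous as it looks if ``corresponds'' is taken at face value.
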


\ 

Now we want to introduce some more terminology on virtual module diagrams,
also inspired by, among others, \cite{BC}.

Our virtual diagrams shall be considered directed - "downwards". By setting $%
X=\left\{ x_{1},...,x_{t}\right\} $\ for the set of vertices, we shall
denote an edge going from $x_{i}$ to $x_{j}$ by $e\left( x_{i},x_{j}\right) $%
; in that case we shall write the relation $x_{j}<x_{i}$.\ We denote by "$%
\leq $" the ordering relation generated on $X$ by all such inequalities.

Next we introduce a topology on the set of subdiagrams, generated by the
following "open" subdiagrams: Namely any subdiagram $X^{\prime }$ having the
properties that, (i) an edge of $X$ is included in it iff both its endpoints
are, and (ii) whenever $x\in X^{\prime }$\ and $y<x$, $y$\ also\ belongs to $%
X^{\prime }$. Clearly the open subdiagrams (and subsequently also the closed
ones, for which we likewise require condition (i)) are completely determined
by their sets of vertices, hence we may also identify them through those:
This makes the thus generated topology on the set of subdiagrams quite
apparent.

But then we can almost immediately see that the comprehensive lemma \ref{CL}%
\ above is equivalent to the following:

\begin{lemma}
\label{ocSD}Open subdiagrams are realizable as submodules, closed ones as
quotient modules of $M$.
\end{lemma}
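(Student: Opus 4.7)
The statement is essentially a topological reformulation of Lemma~\ref{CL}, and the two halves (open $\Rightarrow$ submodule; closed $\Rightarrow$ quotient) are dual, so it is enough to treat open $\Rightarrow$ submodule. Let $X'$ be open; I aim to exhibit a submodule $N$ of $M$ whose $\mathfrak{D}_M$-image is exactly $X'$.

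The first step is realizability. If $X'$ is connected, I verify the criterion of Proposition~\ref{ReD}: suppose a monotone path from $x\in X'$ to $y\in X'$ at different radical layers passes through $z\notin X'$. Monotonicity together with the fact that both endpoints lie in $X'$ forces $z<x$ (after possibly swapping the roles of $x$ and $y$), and then openness forces $z\in X'$, a contradiction. Hence $X'$ is realizable. If $X'$ is disconnected I apply this to each connected component and take the sum of the resulting virtual constituents; the same openness argument rules out any monotone path through an outside vertex joining two different components, so the union of the component diagrams is again $X'$.

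The second step is to upgrade the realization to a submodule rather than a more general subquotient. I build $X'$ up layer by layer from the bottom, ``going up'' in the sense of Lemma~\ref{CL}: at each basis node of $\mathfrak{D}_M$ that lies in $X'$, include precisely those arms whose upper endpoints also lie in $X'$. According to Lemma~\ref{CL}, the only way such a procedure can fail to produce a submodule is by \emph{cleaving a downward fan}, i.e.\ including the head $A$ of such a fan without all its legs $B_1,\dots,B_k$. But $B_i<A$, so openness immediately forces each $B_i\in X'$; hence no downward fan is ever cleaved. Proposition~\ref{StP} then guarantees that performing these ``up'' moves in any order yields the same virtual constituent, which is visibly a submodule of $M$ with image $X'$. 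The closed-$\Rightarrow$-quotient case follows either by running the dual induction ``going down'' from head nodes (using closedness of $X'$ to avoid cleaving upward fans), or, more concisely, by applying Steps 1--2 to the open complement $X\setminus X'$ to get a submodule $N$ and taking $M/N$. The converses---that every submodule yields an open subdiagram and every quotient a closed one---are immediate from Lemma~\ref{CL}.

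The main subtlety I expect is in the layer-by-layer step, which must deal with \textbf{combined nodes}---vertices that are simultaneously head of a downward fan and basis of an upward fan---as well as legs or arms spanning more than two layers. The resolution is that openness is a condition on the \emph{transitive} relation ``$<$'' generated by edges, not just on single edges, so it transmits downward through compound legs exactly as required; and Proposition~\ref{StP}'s permutation-invariance of up/down sequences lets one freely reorder moves at a combined node so that the construction remains well-defined.
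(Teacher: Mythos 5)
Your argument is correct and does what the paper only gestures at: the paper gives no proof, merely asserting that the topological restatement is ``almost immediately'' equivalent to Lemma~\ref{CL}, while you carefully unpack that claim. The one place you go beyond the paper's implicit intent is invoking Proposition~\ref{ReD} for realizability; the paper's framing suggests the shorter route you also spot, namely that openness is precisely the condition that the ``going up'' moves of Lemma~\ref{CL} never cleave a downward fan (and its dual for closed sets), so that realizability as a submodule/quotient falls out directly rather than needing the more general realizability criterion --- but your extra care with disconnected open sets and combined nodes is genuinely useful, since the paper's realizability apparatus is only stated for connected subdiagrams, and a submodule of $M$ can certainly have a disconnected $\mathfrak{D}_M$-shadow.
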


Given that any object of the virtual category $\mathfrak{V}\left( M\right) $
of $M$ is a submodule of a subquotient, we come readily to the following
conclusion:

\begin{proposition}
A subdiagram of a virtual diagram of the module $M$\ is realizable if we can
get to it by a (finite) succession of steps of, alternately, taking open and
closed subdiagrams (: subsets of vertices), each time meant in the relative
topology of the last step.
\end{proposition}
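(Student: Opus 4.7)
My plan is to argue by induction on the length $n$ of the alternating succession of open/closed selections, using Lemma \ref{ocSD} at each stage to translate each topological move into a module-theoretic operation inside $\mathfrak{V}(M)$.

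For $n=0$ the empty sequence of steps leaves us with $\mathfrak{D}_M$ itself, which is realized by $M$. For the inductive step, assume that after $n$ alternating steps we have produced a subdiagram $D_n$ realized by a virtual constituent $N_n \in \mathfrak{V}(M)$. Regarding $D_n$ as a virtual diagram of $N_n$ (inherited vertex-by-vertex and edge-by-edge from $\mathfrak{D}_M$, with the ordering $\leq$ restricted to the vertex set of $D_n$), the next move --- selecting an open (resp.\ closed) subdiagram of $D_n$ in its relative topology --- is by Lemma \ref{ocSD} realizable as a submodule (resp.\ a quotient) of $N_n$. Since by Definition \ref{VCat} the objects of $\mathfrak{V}(M)$ are exactly submodules of subquotients of $M$, any submodule or quotient of $N_n$ remains an object of $\mathfrak{V}(M)$; so $D_{n+1}$ is realizable and the induction closes.

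The delicate point I anticipate is verifying that at each stage $D_n$ really is a bona fide virtual diagram of $N_n$ in the sense of Criterion \ref{Crit}, so that Lemma \ref{ocSD} may be legitimately reapplied. Vertex-virtuality (condition (i)) survives the restriction thanks to the identifications ``of type $S$'' in Definition \ref{VCat}, which allow clean relabelling along chains $C \subset B \subset A$. The structural content of Lemma \ref{CL} for $D_n$ then follows from the corresponding statement for $\mathfrak{D}_M$ together with Proposition \ref{StP}, which guarantees that the ambient sub/quotient operations commute with the internal ones on $D_n$ in the order-independent way we need for the alternation to be well-defined. Once this coherence is established, what remains is essentially routine bookkeeping of the open/closed pattern.
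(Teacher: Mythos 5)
Your argument is correct and follows essentially the same route as the paper: the paper gives only the one-line observation that objects of $\mathfrak{V}(M)$ are submodules of subquotients, so iterating Lemma \ref{ocSD} keeps you inside $\mathfrak{V}(M)$, and your induction is exactly that observation made formal. The ``delicate point'' you flag --- whether $D_n$ inherits enough of the structure of a virtual diagram of $N_n$ for Lemma \ref{ocSD} to reapply --- is a genuine refinement the paper glosses over, and your appeal to the type-$S$ identifications and Proposition \ref{StP} is the right way to close it.
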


\bigskip If we combine lemma \ref{Pof}(b) with lemma \ref{CL}, we come
easily to the following generalization of \ref{Pof}(b) - which has already
been mentioned during our above discussion around the introcuction of
sirtual sums:

\begin{lemma}
For any $D_{M}$-visible submodules $V_{1}$, $V_{2}$ of $M$, their sum $%
V_{1}+V_{2}$ is realized by the open subdiagram having as set of vertices
the union of those of their respective open subdiagrams. Furthermore the
intersection $V_{1}\cap V_{2}$ is realized by the open subdiagram
corresponding to the intersection of them.
\end{lemma}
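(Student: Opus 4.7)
The plan is to identify both required submodules with those furnished by Lemma \ref{ocSD} applied to the union and intersection of the open subdiagrams of $V_1$ and $V_2$, and then to match these with the set-theoretic sum and intersection using the earlier material on virtual sums and common enclosures. First I would note that since $V_1$ and $V_2$ are $\mathfrak{D}_M$-visible submodules, Lemma \ref{ocSD} gives us open subdiagrams $\mathfrak{D}_{V_1}$ and $\mathfrak{D}_{V_2}$ realizing them, with vertex sets $X_1$ and $X_2$. The defining property of openness, "$x\in X'$ and $y<x$ imply $y\in X'$", is preserved under arbitrary set-theoretic unions and intersections, so $X_1\cup X_2$ and $X_1\cap X_2$ are again open subdiagrams. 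Applying Lemma \ref{ocSD} once more yields submodules $W_\cup$ and $W_\cap$ of $M$ that realize them.

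Second, I would identify $W_\cup$ with $V_1+V_2$. The easy direction is containment: since $\mathfrak{D}_{V_i}\subseteq\mathfrak{D}_{W_\cup}$ for $i=1,2$, Lemma \ref{ocSD} gives $V_i\subseteq W_\cup$, hence $V_1+V_2\subseteq W_\cup$. For the reverse, I would invoke the generalization of Lemma \ref{Pof}(b) developed in the discussion surrounding Corollary \ref{vsce}: because $V_1$ and $V_2$ are both submodules of the same virtual constituent, namely $M$ itself, the extended definition of virtual sum makes $V_1\widehat{+}V_2$ coincide with the ordinary submodule sum $V_1+V_2$, and the subdiagram attached to $V_1\widehat{+}V_2$ is precisely the union $\mathfrak{D}_{V_1}\cup\mathfrak{D}_{V_2}$ (with vertex set $X_1\cup X_2$). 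By the uniqueness of the realization of an open subdiagram (a submodule is determined by the set of its $\mathfrak{D}_M$-visible simple constituents, which are precisely the vertices of its open subdiagram), we obtain $W_\cup=V_1+V_2$.

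Third, the intersection statement is handled by the dual argument. The open subdiagram of $X_1\cap X_2$ is realized by some $W_\cap$ contained in both $V_1$ and $V_2$, hence $W_\cap\subseteq V_1\cap V_2$. Conversely, applying the discussion of the common enclosure $V_1\wedge V_2$ in Corollary \ref{vsce} specialized to the present setting (where $V_1$ and $V_2$ are both submodules of $M$, so $V_1\wedge V_2$ coincides with the ordinary intersection $V_1\cap V_2$), and again using the uniqueness of realization of an open subdiagram by its vertex set, one obtains equality.

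The main obstacle I foresee is the rigorous justification of the claim that the open subdiagram with vertex set $X_1\cup X_2$ is precisely the one attached to $V_1+V_2$ and no larger submodule in between. The cleanest way to settle this is to rely on the bijection between open subdiagrams and the $\mathfrak{D}_M$-visible submodules of $M$ produced by Lemma \ref{ocSD}: a visible submodule is determined by its set of visible simple composition factors, and those are exactly the vertices of its open subdiagram, so matching vertex sets forces matching submodules. Once this bijection is taken for granted the entire proof is a short, symmetric check; without it one would be forced to revisit the inductive argument of Proposition \ref{ReD} to reconstruct the submodule from the open subdiagram directly.
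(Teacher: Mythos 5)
Your proof is correct and lands in essentially the same place as the paper, but it routes through slightly different prior results. The paper disposes of this lemma in one sentence of preamble: "combine Lemma \ref{Pof}(b) with Lemma \ref{CL}," which amounts to an inductive, fan-by-fan argument up and down the diagram. You instead appeal to the virtual-sum/common-enclosure apparatus around Corollary \ref{vsce} (where the extended $\widehat{+}$ and $\wedge$ are shown to specialize to ordinary $+$ and $\cap$ when both objects are submodules of the same constituent) and then close the argument with a uniqueness claim: that an open subdiagram determines, and is determined by, its $\mathfrak{D}_M$-visible submodule. That uniqueness is indeed the crux, and you are right to flag it; Lemma \ref{ocSD} as stated only gives one direction (open subdiagram $\Rightarrow$ realizable as a submodule), and the injectivity of this assignment, while implicit in the entire virtual framework where vertices are honest constituents and not isomorphism types, is nowhere formally established in the paper. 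So your proposal is essentially a more explicit packaging of what the paper leaves as a one-line assertion, with the honest bookkeeping of which bijections are being used; the price is that you are relying on material (Corollary \ref{vsce} and its surrounding discussion) that logically sits a bit later and heavier than the two lemmata the paper actually cites, whereas the paper's intended route stays closer to the elementary fan-combinatorics of \ref{Pof}(b) and \ref{CL}.
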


\bigskip We are now ready to turn to the lattice $\left( \mathcal{L}\left(
M\right) ,\vee ,\wedge \right) $ of submodules of $M$, with lattice
operations induced by the submodule-ordering "$\subseteq $", meaning here
that "$\vee $" (join) is "$+$" and "$\wedge $" (meet) is "$\cap $".

This last lemma shows that the $D_{M}$-visible submodules for a virtual
diagram $D_{M}$ form a sublattice $\mathcal{L}_{D}\left( M\right) $ of\ $%
\mathcal{L}\left( M\right) $.\ 

\begin{remark}
It is not so difficult to show that, in case the lattice $\mathcal{L}\left(
M\right) $\ is distributive, in which case the module $M$\ is called \textbf{%
distributive}, then we actually have $\mathcal{L}_{D}\left( M\right) =%
\mathcal{L}\left( M\right) $.\ In connection to that it is crucial to take
an important result of Victor Camillo (\cite[Theorem 1]{ViC}) into account,
stating the following:

\textit{"An }$R$\textit{-module }$M$\textit{\ is distributive if and only if
for every submodule }$N$\textit{, }$M/N$\textit{\ has square-free socle."}\ 
\end{remark}

\ 

Distributivity is, no less in view of the above result, a very crucial
notion up to the achievement of an existence theorem for a virtual diagram
of a given module (see \cite{StG}).

\begin{remark}
Inclusion of submodules being a special case of "$\lessdot $" in $\mathfrak{V%
}\left( M\right) $, by remembering lemma \ref{ocSD} we realize that
determining the lattice $\mathcal{L}_{D}\left( M\right) $\ of $D_{M}$%
-visible submodules\ is equivalent to determining the "$\lessdot $"-induced
lattice of closed subdiagrams of $D_{M}$, or equivalently (by \ref{ocSD}) of
visible quotient modules (notice that a submodule is $D_{M}$-visible iff the
quotient by it is $D_{M}$-visible, their corresponding $D_{M}$-subdiagrams
being complementary). This is immediately seen to be equivalent to the dual
lattice $\mathcal{L}_{D}^{\prime }\left( M\right) $\ of $\mathcal{L}%
_{D}\left( M\right) $; its meet (say of $M/A$ and$\ M/B$) is $M/\left(
A+B\right) $, its join is defined as $M/\left( A\cap B\right) $.
\end{remark}

We could also get analogue results by starting with a virtual diagram of $M$%
, that is fixed through its socle rather than its radical series. Their
interrelation is also studied in \cite{StG}.

We shall need some new notions in what follows, for the preparation of which
the following lemma is intended:

\begin{lemma}
\label{ssQR}\bigskip \-Let $R$ be an artinian ring, $M$ a finitely generated 
$R$-module, $N\subset M$\ a submodule and denote by $J$ the Jacobson radical
of $R$; assuming that we have a virtual socle as well as a virtual radical
series of $M$,\ the following are true:

(i) The socle series of $N$\ is obtained by intersecting $N$\ with the given
virtual socle socle series of $M$.

(ii) For any $R$-epimorphism $\phi :M\twoheadrightarrow M^{\prime }$, $\phi
\left( rad^{\kappa }M\right) =rad^{\kappa }\phi \left( M\right) =rad^{\kappa
}M^{\prime }$. In particular, by taking as $\phi :M\twoheadrightarrow
M\diagup N$, the canonical epimorphism, we get the dual to (i): $rad^{\kappa
}\left( M\diagup N\right) \widehat{=}rad^{\kappa }M\diagup \left( N\cap
rad^{\kappa }M\right) $.

(iii) $rad^{\kappa }\left( M\diagup \left( N\cap rad^{\kappa }M\right)
\right) \widehat{=}rad^{\kappa }\left( M\diagup N\right) $\ \ 
\end{lemma}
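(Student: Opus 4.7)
The plan is to treat the three parts separately, each reducing to a standard identification from elementary module theory combined with one of the virtual identifications of Definition \ref{VCat}.

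For part (i) I would argue: since $M$ is Artinian, $\mathrm{Soc}(M)$ is the largest semisimple submodule of $M$, namely the annihilator in $M$ of $J$. Thus $N \cap \mathrm{Soc}(M)$ is semisimple (as a submodule of $\mathrm{Soc}(M)$) and equals $\{x\in N : Jx=0\} = \mathrm{Soc}(N)$, so the base case is clear. For the iterative step, pass to $M/\mathrm{Soc}(M)$: the image of $N$ there is $N/(N\cap\mathrm{Soc}(M))$, its socle pulls back to $\mathrm{Soc}^{2}(N)$, and the induction hypothesis gives $\mathrm{Soc}^{2}(N) = N\cap\mathrm{Soc}^{2}(M)$. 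Iterating $\kappa$ times yields the claim.

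For part (ii), because $\mathrm{rad}\,M = JM$ for Artinian $R$ and any $R$-epimorphism $\phi$ satisfies $\phi(JM) = J\phi(M)$, we get inductively $\phi(J^{\kappa}M)=J^{\kappa}\phi(M)$, i.e.\ $\phi(\mathrm{rad}^{\kappa}M)=\mathrm{rad}^{\kappa}M'$. Taking $\phi$ to be the canonical map $M\twoheadrightarrow M/N$, the image of $\mathrm{rad}^{\kappa}M$ is $(\mathrm{rad}^{\kappa}M+N)/N$, and the second isomorphism theorem rewrites this as $\mathrm{rad}^{\kappa}M/(N\cap\mathrm{rad}^{\kappa}M)$. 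The ``$\widehat{=}$'' is then precisely the type f2 identification built into $\mathfrak{V}(M)$.

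For part (iii), I would simply apply part (ii) with $N$ replaced by $N':=N\cap\mathrm{rad}^{\kappa}M$. This gives $\mathrm{rad}^{\kappa}(M/N')\,\widehat{=}\,\mathrm{rad}^{\kappa}M/(N'\cap\mathrm{rad}^{\kappa}M)$; but $N'\subseteq\mathrm{rad}^{\kappa}M$, so $N'\cap\mathrm{rad}^{\kappa}M=N'=N\cap\mathrm{rad}^{\kappa}M$, hence $\mathrm{rad}^{\kappa}(M/N')\,\widehat{=}\,\mathrm{rad}^{\kappa}M/(N\cap\mathrm{rad}^{\kappa}M)$. Comparing with the particular case already obtained in (ii) for $N$ itself, both sides of (iii) are virtually identified with the common expression $\mathrm{rad}^{\kappa}M/(N\cap\mathrm{rad}^{\kappa}M)$, whence (iii).

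No step is really an obstacle; the only mild subtlety is keeping track of whether we have honest equality of submodules inside $M$ or only virtual equality ``$\widehat{=}$'' in $\mathfrak{V}(M)$. I would be explicit that (i) is a genuine equality of submodules of $M$ (because the virtual socle series is specified ``inside'' $M$), while the statements in (ii) and (iii) go through an isomorphism of type f2 and are therefore only guaranteed modulo the virtual identifications of Definition \ref{VCat} — which is exactly what the notation ``$\widehat{=}$'' signals.
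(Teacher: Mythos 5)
Your proof is correct. On parts (i) and (ii) you travel essentially the same ground as the paper: for (i) the paper simply cites Landrock \cite[Lemma I 8.5(i)]{PL} and adds the remark about multiple isomorphic constituents forcing the virtual-category reading, whereas you supply the annihilator argument $\mathrm{Soc}(M)=\{x\in M: Jx=0\}$ and iterate through $M/\mathrm{Soc}(M)$; these are the same idea spelled out. Part (ii) matches the paper word for word in substance: $\mathrm{rad}^{\kappa}M=J^{\kappa}M$, $\phi(J^{\kappa}M)=J^{\kappa}\phi(M)$, then the second isomorphism theorem / f2 identification.

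On part (iii), however, you take a genuinely different and cleaner route. The paper establishes a decomposition $M/(N\cap \mathrm{rad}\,M)\;\widehat{=}\;(M/N)\;\widehat{+}\;V$ with $V$ semisimple on the head, deduces $\mathrm{rad}(M/(N\cap\mathrm{rad}\,M))=\mathrm{rad}(M/N)$ for the single step, and then runs an induction substituting $\mathrm{rad}\,M$ for $M$ and $N\cap\mathrm{rad}\,M$ for $N$. You instead observe that (ii) is already stated for an arbitrary submodule, so specializing it to $N':=N\cap\mathrm{rad}^{\kappa}M$ and noting $N'\cap\mathrm{rad}^{\kappa}M=N'$ identifies both $\mathrm{rad}^{\kappa}(M/N')$ and $\mathrm{rad}^{\kappa}(M/N)$ with the same object $\mathrm{rad}^{\kappa}M/(N\cap\mathrm{rad}^{\kappa}M)$, whence (iii) drops out in one stroke. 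Your version avoids both the auxiliary semisimple summand $V$ and the induction entirely, at the small cost of invoking (ii) twice; it makes the logical dependence of (iii) on (ii) fully explicit, which the paper's more structural argument leaves implicit. Your closing remark about honest equality in (i) versus type-f2 virtual equality in (ii) and (iii) is exactly the distinction the paper is gesturing at.
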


\begin{proof}
\bigskip (i) The proof can be found in \cite[Lemma I 8.5(i)]{PL}; however we
have to remark that not just the proof but even the statement makes only
sense in general in the frame of the virtual category $\mathfrak{V}\left(
M\right) $, because otherwise it becomes impossible whenever there are
multiple isomorphic irreducibles on the same layer and $N$\ encloses (in the
sense of "$\lessdot $") some (but not all) of them.

(ii) Recall that $rad^{\kappa }M=J^{\kappa }M$, whence $\phi \left(
rad^{\kappa }M\right) =\phi \left( J^{\kappa }M\right) =J^{\kappa }\phi
\left( M\right) =J^{\kappa }M^{\prime }=rad^{\kappa }M^{\prime }$. Then $%
rad^{\kappa }\left( M\diagup N\right) =rad^{\kappa }\phi \left( M\right)
=\phi \left( rad^{\kappa }M\right) =$

$=\left( N+rad^{\kappa }M\right) \diagup N\widehat{=}rad^{\kappa }M\diagup
\left( N\cap rad^{\kappa }M\right) $, according to identification f2 in the
definition of the virtual category $\mathfrak{V}\left( M\right) $.

(iii) It is quite clear that $M\diagup \left( N\cap radM\right) \widehat{=}%
\left( M\diagup N\right) \widehat{+}V$ \ for some completely reducible
module $V$, sitting on the head of $M$ (hence no part of $V$\ is enclosed by 
$radM$). From that relation it follows that $rad\left( M\diagup \left( N\cap
radM\right) \right) =rad\left( M\diagup N\right) $. Then the inductive
argument proceeds by substituting above $radM$\ for $M$\ \& $N\cap radM$ for 
$N$\ - and so on.
\end{proof}

The radical powers $rad^{i}M$\ and the (higher) socle submodules $%
S_{i}\left( M\right) $\ of a module $M$\ are distinct and characteristic
submodules, giving by their successive (semisimple) quotients the well known
radical (or Loewy) and socle series of it.\ Although virtual radical and
socle series are obtainable through virtual diagrams, it turns out that
these important submodules play a crucial role in proving the existence of
virtual diagrams for arbitrary modules (see \cite{StG}).\ 

\textbf{Given a virtual diagram \ }$D\left( M\right) $\textbf{, the
corresponding virtual radical series is gotten by placing the simple virtual
constituents in layers and as high as possible, meaning that, while moving
along paths (of downward successive edges) from the head of }$M$\textbf{,
all edges shall be going from the one radical layer down to the next one,
unless the simple virtual on the lawer end of the edge is held lower down by
some longer vertical paths; dually for the socle series. We shall call that
characteristic peculiarity in the shaping out of the virtual radical, resp.
socle, series of }$M$\textbf{, corresponding to a virtual diagram of }$M$%
\textbf{, an \textit{upward or downward tropism}.}

Lemma \ref{ssQR} says indeed that we can on a virtual socle series of $M$
follow the virtual simple costituents of submodules of $M$, correspondingly
for quotient modules on a virtual radical series. This does on the other
hand no only become apparent under the light of the virtual category but
also is a special case of a much more general fact, namely that we can
locate any $\mathfrak{D}\left( M\right) $-visible virtual constituent on the
virtual diagram $\mathfrak{D}\left( M\right) $\ and follow it in the
generation of a virtual radical (resp., socle) series of $M$.\ 

\begin{definition}
\bigskip\ (i) Given a module $M$, we shall call a submodule or a quotient of 
$M$ a full-depth one, if it has the same Loewy length as $M$. \ 

(ii) Given an object $E$\ in $\mathfrak{V}\left( M\right) $, a virtual
radical series of $M$, manifesting $E$, so that the "lowest" in that radical
series of $M$ lying simple constituents of $E$ are on the $i$-th layer (i.e.
in $rad^{i-1}M\diagup rad^{i}M$), we define as a maximal-depth ($E$%
-)enclosing quotient of $M$\ one of Loewy length equal to $i$, where by
"enclosing" we bear in mind the ordering "$\lessdot $". Dually, we shall
likewise speak of a maximal depth $E$-enclosing submodule of $M$.
\end{definition}

It is trivial to check that the definitions in (ii) above are unambiguous in
the frame of the virtual category.

\begin{remark}
\label{imp}IMPORTANT:Whenever there are projective covers, resp. injective
hulls, in a category of modules (such as the one of [finitely generated]
modules over an Artinian ring), we may consider any given module $N$\ as a
quotient of a projective cover, resp. a submodule of an injective hull,
whenever of course $N$ is isomorphic to a quotient (resp., a submodule) of
the latter and corresponding to that - recall in this respect \cite[6.25(ii)]%
{CR} and its dual statement: That virtual identification means that, by "a
cover" or "a hull" we may demand one whose relevant part (be it at the
bottom in the injective hull or on top in the projective cover) is
"virtually" identifiable (rather than just isomorphic) to some given $N$,
thus not contradicting the "uniqueness" (i.e., up to isomorphism) of these
concepts in their general module category. We shall briefly designate them
as projective covers, resp. injective hulls, \textbf{virtual over} some
(virtually, i.e. not just up to isomorphism) given module, e.g. the virtual
portion of $N$\ in question. This remark is enhanced by the following
proposition (and its corollary).

There is of course some well-placed intuition involved here - and there does
certainly arise a challenge of setting up some better fitted
category-theoretic conceptual frame to match these subtleties. Our main
principle and concern in this article here (and the following one) is to
suit "the cloths to the person", not conversely: Meaning that, as long as
there is no mathematical ambiguity, we are primarily concerned with our
mathematical ideas and concerns.
\end{remark}

\bigskip

\begin{proposition}
\label{ipE}Given\ a virtual diagram $\mathfrak{D}$ of a module $M$, the
virtual object $E$ corresponding to a $\mathfrak{D}$-edge $%
\begin{array}{c}
A \\ 
\downarrow \\ 
B%
\end{array}%
$ may be viewed both as a submodule of an injective hull $I_{B}$ of $B$\
over some maximal depth ($E$-)enclosing quotient of $M$, or as a quotient of
a projective cover $P_{A}$ \ of $A$\ over some maximal depth ($E$-)enclosing
submodule of $M$. Furthermore in both cases that quotient or submodule may
be chosen to be "$\lessdot $"-maximal.
\end{proposition}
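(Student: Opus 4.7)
The two assertions are mutually dual, so I would carry out the projective--cover statement in detail and read off the injective--hull statement by reversing arrows, replacing submodules by quotients and radicals by socles throughout.

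Starting from the edge $A\to B$ representing $E$, the plan is first to extract from $\mathfrak{D}$ a submodule $N$ of $M$ that encloses $E$ and is, among all such submodules, both of maximal depth (in the sense introduced just before remark \ref{imp}) and $\lessdot$-maximal. Concretely, I would examine the open subdiagrams of $\mathfrak{D}$ (in the topology introduced before lemma \ref{ocSD}) that contain the edge of $E$; by that lemma each realizes a submodule of $M$. Among these, those whose highest vertex lies on the same radical layer as $A$ have the correct Loewy length; open subdiagrams being closed under unions of vertex sets, and the vertex set being finite, a $\subseteq$-maximal such open subdiagram exists. Its realization is the desired $N$, whose head is forced by the construction to be exactly $A$.

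Once $N$ is at hand, a projective cover $P_A$ of $A$ is chosen virtual over $N$ in the sense of remark \ref{imp}: the head $A$ of $P_A$ is virtually identified with the head of $N$, producing a canonical surjection $P_A \twoheadrightarrow N$. On the other hand, $E$ itself is realized as the two-layer top quotient of $N$ obtained by factoring out everything in the first radical layer of $N$ except the image of $B$; that this quotient is virtually $E$ is immediate from lemma \ref{CL} combined with proposition \ref{StP}, applied to the subdiagram obtained from the open subdiagram of $N$ by truncating all legs out of $A$ other than the edge to $B$. Composing yields $P_A \twoheadrightarrow N \twoheadrightarrow E$, exhibiting $E$ as the required quotient of $P_A$. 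The dual construction --- $\subseteq$-maximal closed subdiagrams, the socle-layer of $B$, a maximal-depth $E$-enclosing quotient $M/K$, and an injective hull $I_B$ virtual over $M/K$ --- realizes $E$ as a submodule of $I_B$.

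The main obstacle I anticipate is the joint-maximality step: simultaneously achieving maximal depth and $\lessdot$-maximality for $N$ while forcing its head to consist solely of $A$, so that $P_A$ genuinely covers $N$ rather than merely mapping into it. This is where the Satiety clause of Criterion \ref{Crit}(iii), the fan constraints of lemma \ref{CL}, and the realizability criterion of proposition \ref{ReD} must combine with the non-split-extension property of diagram edges (remark \ref{ens}) to rule out that an additional head constituent can be attached to $N$ without either violating the depth bound or breaking realizability of the underlying subdiagram. Once this is in place, the factorization above completes the argument.
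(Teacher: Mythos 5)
Your plan is a genuinely different attack from the paper's, and the difficulty you flag at the end is a real gap rather than merely a concern: your construction of $N$ does not force $Hd(N)=A$, so the surjection $P_{A}\twoheadrightarrow N$ on which the whole factorization rests need not exist. Concretely, suppose $\mathfrak{D}$ also contains an edge from a vertex $A^{\prime}$ on $A$'s radical layer down to $B$ (or to any vertex reachable from $A$). Then the $\subseteq$-maximal open subdiagram containing the $E$-edge and having its highest vertex on $A$'s layer also contains $A^{\prime}$, and the realized submodule $N$ has head with a summand $A\oplus A^{\prime}$. Such an $N$ is not a homomorphic image of the projective cover $P_{A}$ of the simple $A$, so $P_{A}\twoheadrightarrow N\twoheadrightarrow E$ breaks at the first arrow. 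Maximizing depth and taking unions of open subdiagrams cannot, by themselves, eliminate extra source vertices on $A$'s layer; nothing in Criterion \ref{Crit}(iii), lemma \ref{CL} or proposition \ref{ReD} rules them out.

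The paper avoids this by never demanding in advance that the head of the intermediate module be $A$; it lets the universal property produce a module with the right head (resp.\ socle). For the injective-hull half, from which the other follows by duality: it takes $M\diagup rad^{t}M$, the least radical quotient containing the edge, passes to the indecomposable direct summand $N$ containing $B$ (hence enclosing $E$), forms the embedding $E\hookrightarrow I_{B}$ into an injective hull virtual over $E$, and uses the injectivity of $I_{B}$ to extend that embedding along $E\hookrightarrow N$ to a map $\phi^{\prime}:N\rightarrow I_{B}$. The image $N^{\prime}=\phi^{\prime}(N)$ is automatically a submodule of $I_{B}$ (hence has socle exactly $B$), a quotient of $M$ enclosing $E$, and one then retakes $I_{B}$ as an injective hull over $N^{\prime}$, choosing the extension so that $N^{\prime}$ is "$\lessdot$"-maximal. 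Dually, for the projective statement one lifts $P_{A}\twoheadrightarrow E$ along $N\twoheadrightarrow E$ (with $N$ the indecomposable summand of the appropriate socle submodule) and takes as the enclosing submodule the image of the lift, whose head is $A$ precisely because it is a homomorphic image of $P_{A}$. That "take the image of the extended/lifted map" step is exactly what your argument is missing, and it is what makes the factorization through the cover (resp.\ hull) automatic rather than something to be separately engineered.
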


\begin{proof}
\bigskip Having a virtual diagram, we can get both a virtual radical and a
virtual socle series of $M$. Let $M\diagup rad^{t}M$\ be the least radical
quotient of $M$, containing our edge. By factoring out every direct summand
of $M\diagup rad^{t}M$, other than the one containing $B$, we get some new
quotient of $M$, which is in the same virtual class as the indecomposable
summand $N$ of $M\diagup rad^{t}M$, containing $B$\ (i.e., representing the
same virtual object as $N$\ does); it is clear that, the summand $N$
containing $B$\ also encloses $E$, otherwise we would get a contradiction to
the indecomposability of the latter. Consider that quotient, that contains
that edge, e.g. $E$\ as a submodule. Then, by taking the virtual injective
hull $I_{B}$ (of\ $B$) over $E$ (see remark above), we have an embedding (:
as a submodule!) $\phi $ of $E$\ in $I_{B}$, whence the injective property
yields an extension of $\phi $\ to $N$, implying the existence of a quotient 
$N^{\prime }$ of $N$\ ("over $E$") of Loewy length $t+1$, which is
isomorphic to a submodule of $I_{B}$. But then we may take as $I_{B}$\ an
injective hull over $N^{\prime }$\ instead. We can clearly choose the
extension of $\phi $\ so that $N^{\prime }$ be maximized. We dualize the
proof for the dual statement.

It is important to point out here that we may get to some such quotients
that are $\mathfrak{D}$-visible by means of lemma \ref{CL}; likewise for the
dual case too.
\end{proof}

\bigskip An apparent disadvantage of this proposition is that it presumes
the existence of a virtual diagram for the given module $M$, then it takes
not any subquotient of $M$ but an extension of $B$\ by $A$ in the virtual
category, that realizes an edge of that diagram. However, we are going to
show in our forthcoming work \cite{StG} that, not only every (always
finitely generated!) module has a virtual diagram, but that this may even be
one that manifests any (submodule of a) subquotient, that is an
indecomposable of composition length 2. Not having done this yet, we adjust
the above proof to an apparently more general assumption, to get the
following corollary. This subordination of statements (rather than
vice-versa) is justified by our main interest in the virtual diagrammatic
point of view:

\begin{corollary}
\label{extr}Let a module $M$ be given and a filtration $N\subset K\subset
L\subset M$, such that $L\diagup N$\ be an indecomposable of composition
length 2; call $A$, $B$, $E$ the objects of the virtual category $\mathfrak{V%
}\left( M\right) $, corresponding to the subquotients $L\diagup K$, $%
K\diagup N$, $L\diagup N$. Then $E$ may be viewed both as a submodule of an
injective hull $I_{B}$ over some maximal depth ($E$-)enclosing quotient of $%
M $,\ or as a quotient of a projective cover $P_{A}$ \ over some maximal
depth ($E$-)enclosing submodule of $M$. Furthermore in both cases that
quotient or submodule may be chosen to be "$\lessdot $"-maximal - and it is
indecomposable.
\end{corollary}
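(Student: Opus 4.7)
The plan is to mirror the proof of Proposition \ref{ipE} while dispensing with the virtual diagram hypothesis: the filtration $N\subset K\subset L\subset M$ together with the honest radical and socle series of $M$ suffice to carry out the same injective/projective extension argument, the role of the (hypothetical) virtual diagram being limited to locating $E$ within a radical/socle layering.

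For the injective hull assertion, let $i$ be the radical depth at which $B=K/N$ is manifested in $M$, i.e.\ the smallest integer such that $K\subseteq N+rad^{i-1}M$ but $K\not\subseteq N+rad^{i}M$. Form the quotient $M/rad^{i}M$, in which the image of $E$ is still a length-$2$ subquotient with $B$ sitting in the bottom (socle) layer. Decompose $M/rad^{i}M$ by Krull--Schmidt and factor out every indecomposable summand not enclosing (the image of) $B$; the remaining indecomposable summand $\bar N$ is then a quotient of $M$ of Loewy length $i$, and because $E$ is indecomposable with socle $B$ we have $E\lessdot\bar N$ (any indecomposable summand containing $B$ must enclose the whole of $E$). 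Now take an injective hull $I_{B}$ virtual over $\bar N$ in the sense of Remark \ref{imp}: $I_{B}$ has simple socle $B$, so the subobject $E\subseteq\bar N$ embeds in $I_{B}$; by injectivity the embedding extends to some $\tilde\phi:\bar N\to I_{B}$, and choosing $\tilde\phi$ with maximal image yields the desired $\lessdot$-maximal, $E$-enclosing indecomposable quotient of $M$.

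The projective cover assertion is dualised: locate $A=L/K$ as a simple constituent on the appropriate layer of the socle series of $M$, use Krull--Schmidt on the corresponding socle submodule to isolate the indecomposable direct summand enclosing $A$, and then lift a projective cover $P_{A}$ of $A$ onto it via the projective property. The main obstacle in both directions is the compatibility check that the given filtration is aligned with the radical (respectively socle) layering of $M$ --- equivalently, that $L/N$ survives as a length-$2$ indecomposable after truncation to $M/rad^{i}M$ (respectively to the appropriate socle-series submodule). This reduces, via lemma \ref{ssQR}, to the indecomposability of $E$ itself: if $B$ sat strictly deeper than layer $i$ in $M$, the virtual edge $A\to B$ could not be realised by $E$. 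Indecomposability of the resulting enclosing quotient/submodule is immediate from the Krull--Schmidt step, and $\lessdot$-maximality comes from the maximisation of $\tilde\phi$ (or its projective dual), exactly as in Proposition \ref{ipE}.
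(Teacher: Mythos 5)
Your proposal diverges from the paper's own proof in a way that opens a genuine gap. The paper handles this corollary very economically: it instructs us to substitute $M\diagup N$ for the module used in Proposition~\ref{ipE} and rerun the same argument. The whole point of that substitution is that in $M\diagup N$ the object $E=L\diagup N$ becomes an \emph{honest submodule}, with simple socle $B=K\diagup N$, so the injective hull of $B$ absorbs $E$ directly as a submodule and the injective property kicks in. You never make this move. You define the depth $i$ in terms of $N$, but you then form $M\diagup rad^{i}M$ \emph{without} factoring $N$ out: in that quotient the image of $E$ is, as you yourself say, still only a length-$2$ \emph{subquotient}, not a submodule. Two lines later you nonetheless write ``the subobject $E\subseteq\bar N$ embeds in $I_{B}$'' and invoke injectivity to extend the embedding to $\bar N$. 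That step is not available: $E$ is a subquotient of $\bar N$, and the relation $E\lessdot\bar N$ (which is what your ``encloses'' argument actually gives) does not let you map $\bar N\to I_B$ by the injective property. The injectivity argument requires a genuine inclusion $E\hookrightarrow\bar N$.

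Your closing remark, that the ``compatibility check'' reduces via Lemma~\ref{ssQR} to indecomposability of $E$, does not repair this. The issue is not whether $B$'s radical depth is computed correctly; it is that the object you hand to the injective hull is not a submodule of the ambient quotient. The fix is exactly the paper's: replace $M$ by $M\diagup N$ at the outset, so $E$ sits inside $M\diagup N$ as a submodule, then decompose the appropriate radical quotient of $M\diagup N$, isolate the indecomposable summand containing $B$, and proceed as in Proposition~\ref{ipE} (and dually for the projective cover). The Krull--Schmidt step and the maximisation of $\widetilde\phi$ that you describe are correct once this preliminary substitution is in place, and the observation that indecomposability of the resulting quotient (resp.\ submodule) follows from simplicity of its socle (resp.\ head) matches what the paper records.
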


\begin{proof}
\bigskip For the first part, we may here substitute $M\diagup N$\ for the $N$%
\ of the last proof and imitate the arguments; dualize for the second
statement. Indecomposability becomes evident through the fact that the socle
(respectively, the head) is simple.
\end{proof}

Finally, we state a special case of the last proposition, indeed already
covered by the remark \ref{imp} preceding it, because of its importance in
what follows:

\begin{corollary}
\label{edge}Every edge of a virtual diagram of a module $M$\ (and its
corresponding virtual object $E$)\ may be virtually identifiable both, with
a quotient of the projective cover of its head or a submodule of an
injective hull of its socle.
\end{corollary}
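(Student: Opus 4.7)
The plan is to derive this as a direct specialization of Proposition \ref{ipE} (or of its variant Corollary \ref{extr}). An edge of a virtual diagram of $M$ represents, by Criterion \ref{Crit}(ii) together with Remark \ref{ens}, a non-split virtual constituent $E$ of Loewy length $2$ with simple head $A$ and simple socle $B$. Proposition \ref{ipE} has already asserted that such an $E$ is both a submodule of an injective hull $I_B$ of $B$ and a quotient of a projective cover $P_A$ of $A$, with the extra strengthening that $I_B$ and $P_A$ can be chosen over maximal-depth $E$-enclosing quotients, resp.\ submodules, of $M$. Discarding that additional information yields precisely the present statement.

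If one prefers to sidestep the machinery of virtual radical and socle series used in the proof of Proposition \ref{ipE}, one can argue directly from Remark \ref{imp} together with the elementary universal properties of $P_A$ and $I_B$. Fix any realization of $E$ as a virtual subquotient of $M$. Since $P_A$ is a projective cover of the head $A$ of $E$, the canonical surjection $E \twoheadrightarrow A$ lifts through the cover map $P_A \twoheadrightarrow A$ to a morphism $P_A \to E$; its image has head $A$, hence by Nakayama's lemma it cannot be a proper submodule of $E$, so the lift is surjective. Dually, the inclusion $B \hookrightarrow I_B$ of the simple socle of $E$ extends, by injectivity of $I_B$, to a morphism $E \to I_B$; its kernel is forced to meet $\mathrm{soc}(E) = B$ trivially, and since $B$ is the whole socle of $E$ the kernel must be zero.

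What remains is to invoke Remark \ref{imp} in order to read these ordinary module maps inside the virtual category $\mathfrak{V}(M)$: choosing $P_A$ virtual over the head of $E$ and $I_B$ virtual over its socle, the surjection and injection above become virtual identifications of $E$ with a quotient of $P_A$ and a submodule of $I_B$ respectively. The only subtlety --- and it is not really an obstacle, but merely a matter of bookkeeping --- is ensuring that these virtual identifications are compatible with the virtual class of $E$ inside $M$, which is exactly what Remark \ref{imp} supplies. Thus the statement follows with essentially no new content beyond what Proposition \ref{ipE} has already delivered.
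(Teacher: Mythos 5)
Your proposal is correct and matches the paper's intent: the paper offers no standalone proof, explicitly labeling the corollary as ``a special case of the last proposition, indeed already covered by the remark \ref{imp},'' which is exactly your first paragraph. The supplementary direct argument you give via the universal properties of $P_A$ and $I_B$ (lift of $E\twoheadrightarrow A$ is onto by Nakayama; extension of $B\hookrightarrow I_B$ is injective because $B$ is the essential socle of $E$) is sound and somewhat more self-contained, though the paper simply leans on Proposition \ref{ipE} and Remark \ref{imp} without re-deriving these facts.
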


\bigskip

\begin{lemma}
\label{1extr}That maximal quotient $F_{B}$ (resp., maximal submodule $G_{A}$%
) described in corollary \ref{extr} is independent of the chosen filtration
in the statement, as long as we retain the same simple virtuals $A$ and $B$.
\end{lemma}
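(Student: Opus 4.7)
The plan is to reduce the statement to an assertion about the virtual object $E$ corresponding to the edge $A \to B$, rather than about any particular filtration realizing $E$. First I would argue that $E$ itself is an invariant of the pair $(A,B)$: given two filtrations $N_i \subset K_i \subset L_i \subset M$ ($i=1,2$) with $L_i/K_i \,\widehat{=}\, A$ and $K_i/N_i \,\widehat{=}\, B$, both $L_1/N_1$ and $L_2/N_2$ are indecomposable length-$2$ subquotients with the same head and socle pinned at the same virtual position of $M$. The identifications of types f2, f3 and $S$ in Definition \ref{VCat}, combined with the fact that a virtual diagram is a simple graph (no multiple edges between the same pair of vertices), force $L_1/N_1 \,\widehat{=}\, L_2/N_2 \,\widehat{=}\, E$.

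With $E$ well defined from $(A,B)$, I would revisit the construction of $F_B$ given in the proof of Proposition \ref{ipE} and its adaptation in Corollary \ref{extr}, and check that each of its ingredients is a function of $E$ alone. These ingredients are: (i) the least $t$ with $E\lessdot M/\mathrm{rad}^t M$, which is determined by the virtual radical depth of $B$; (ii) the indecomposable summand $N$ of $M/\mathrm{rad}^t M$ containing (the virtually placed) $B$, which is pinned down by the position of $B$ in $\mathfrak{V}(M)$; and (iii) a $\lessdot$-maximal quotient of $N$ that embeds in the injective hull $I_B$ of $B$, extending the given embedding $E\hookrightarrow I_B$. None of these inputs sees the filtration beyond the data of $A$ and $B$, so modulo verifying (iii) the construction produces a well-defined $F_B$.

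The main obstacle is step (iii): two $\lessdot$-maximal lifts $\phi_1,\phi_2: N\to I_B$ of the common inclusion $E\hookrightarrow I_B$ need not a priori have the same kernel. I would resolve this by the standard injective-envelope argument: by maximality each image $\phi_i(N)$ is a $\lessdot$-maximal submodule of $I_B$ among those arising as a quotient of $N$ that still contains the image of $E$; the essential nature of $E\hookrightarrow I_B$ and the uniqueness (up to automorphism of $I_B$ fixing $E$) of the injective envelope force $\phi_1(N)$ and $\phi_2(N)$ to agree in $\mathfrak{V}(M)$, hence $N/\ker\phi_1 \,\widehat{=}\, N/\ker\phi_2$. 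Consequently $F_B$ depends only on $(A,B)$.

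Dualizing the entire argument (replacing $M/\mathrm{rad}^t M$ by the appropriate socle submodule $S_t(M)$, the indecomposable summand containing $B$ by the one containing $A$, and the injective hull $I_B$ by the projective cover $P_A$) yields the corresponding independence of $G_A$, completing the lemma.
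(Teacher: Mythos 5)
Your proposal takes a genuinely different route from the paper, and while the overall strategy (show $E$ is an invariant of the pair $(A,B)$, then show the construction of $F_B$ depends only on $E$) is a natural one, two of its key steps have gaps that the paper's own argument sidesteps.

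First, your step that $L_1/N_1\,\widehat{=}\,L_2/N_2$ leans on the fact that a virtual diagram is a simple graph with no multiple edges. But the text immediately before Definition \ref{Dextr} insists that this lemma is proved using only generalized diagrams associated to filtrations, ``with no need of assuming (or proving) the existence of a suitable virtual diagram for $M$'', precisely because the existence theorem lies downstream in \cite{StG}. The simple-graph condition is an axiom imposed on objects we call virtual diagrams, not an established structural fact about arbitrary pairs of filtrations, so invoking it here is circular in the intended logical order.

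Second, and more seriously, step (iii) does not close. You propose that two $\lessdot$-maximal lifts $\phi_1,\phi_2:N\to I_B$ of the given embedding $E\hookrightarrow I_B$ must have virtually equal images because of the uniqueness of the injective envelope up to an automorphism of $I_B$ fixing $E$. That uniqueness statement controls the ambient $I_B$, not its internal submodules: an automorphism of $I_B$ fixing $E$ can perfectly well move one essential submodule to a different one. Indeed, if $I_B$ has socle $B$ and second socle layer containing non-isomorphic simples $C$ and $C'$, the two length-$2$ submodules $B\!-\!C$ and $B\!-\!C'$ are both essential and both contain $E$ (whenever $E$ is $B$ alone, say), yet no automorphism of $I_B$ identifies them; in a setting with $\operatorname{head}(N)$ surjecting onto both $C$ and $C'$ you would get two genuinely different maximal quotients of $N$ embedding over $E$. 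So the filtration-independence of the maximal quotient is not a corollary of envelope uniqueness and needs its own argument.

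The paper's own proof is a direct diagrammatic/lattice argument: it fixes $F_B$ as built from the original filtration and then shows that any other admissible filtration $N'\subset K'\subset L'$ cannot enclose any simple virtual constituent of $F_B$ other than $A$ and $B$, because doing so would exhibit a (generalized) blunted arm as a submodule in contradiction to Lemma \ref{comprL} and Proposition \ref{ReD}. This bypasses both the uniqueness of $E$ and any appeal to envelope rigidity by arguing directly at the level of what the competing filtration is allowed to touch inside $F_B$.
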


\begin{proof}
The critical remark to make is, in the first case, that also in any other
suitable filtration $N^{\prime }\subset K^{\prime }\subset L^{\prime
}\subset M$, there may no simple virtual constituents of $F_{B}$, other than 
$A$\ and $B$, be enclosed by (necessarily then, all of) $N^{\prime }$, $%
K^{\prime }$, $L^{\prime }$,\bigskip as that would give a (generalized)
"blunted arm" as a submodule (in a suitable submodule of $F_{B}$), in
contradiction either to lemma \ref{comprL} or to proposition \ref{ReD}. Dual
arguments for the other case.
\end{proof}

\begin{definition}
\label{Dextr}We shall call call that maximal quotient $F_{B}$ (resp.,
maximal submodule) described by lemma \ref{1extr} the injective extract of $%
B $\ [over $A$] (resp., the projective extract of $A$\ [over $B$]) in $%
\mathfrak{V}\left( M\right) $.
\end{definition}

It is clear that the the injective extract $F_{B}$ of $B$\ over some other
virtual simple $A^{\prime }$, which is enclosed by $F_{B}$, is $F_{B}$\
again. But then by a combination of the fact, that a generalized arm
(outgoing from node $B$) cannot correspond to a quotient (lemma \ref{comprL}%
) and of proposition \ref{ReD} we draw the conclusion (and its dual) that
any such virtual $A^{\prime }$\ has to be enclosed by $F_{B}$! Notice that
we only need notions of some generalized diagrams corresponding to
filtrations, with no need of assuming (or proving) the existence of a
suitable virtual diagram for $M$.

That means that in the last definition we may in fact drop the references
"over $A$/over $B$" - and just speak of the injective (resp. projective)
extract of some simple virtual in $\mathfrak{V}\left( M\right) $.

\section{\protect\bigskip Extensions from a virtual point of view}

In what follows we shall often be referring to \cite[Chapter III, especially
sections 1, 3, 5.]{ML}. We shall look at (1-) extensions of $R$-modules, and
first of all we want to discuss the case of "proportional" extensions, as we
have called them in \cite{StG}, corresponding to "composites of a small
exact sequence with a homomorphism \textit{(in our case: an automorphism)}"
according to their exposition in \cite[Chapter III, section 1]{ML}:

\bigskip For $R$-modules $A$, $C$ we consider the set of extensions $%
YExt_{R}^{1}\left( C,A\right) $ as consisting of the equivalence classes in
the class of short exact sequences $\mathcal{E}$ (by which we may also
denote its extension class) of the form $\mathcal{E}:0\longrightarrow
A\longrightarrow E\longrightarrow C\longrightarrow 0$ in the usual manner
(the corresponding notation in \cite{ML} is $Ext_{R}\left( C,A\right) $).

$YExt_{R}^{1}\left( \_,\_\right) $ is a bifunctor, contravariant in the
first, covariant in the second argument: Let, so, $\alpha :A\longrightarrow
A^{^{\prime \prime }}$, $\gamma :C^{^{\prime }}\longrightarrow C$\ be
non-zero $R$-module homomorphisms; our bifunctor transforms them to $\alpha
_{\ast }:\mathcal{E}\longrightarrow \mathcal{E}^{^{\prime \prime }}$, $%
\gamma ^{\ast }:\mathcal{E}^{\prime }\longrightarrow \mathcal{E}$\ by using
identity map on the other extreme term and by getting the middle term of $%
\mathcal{E}^{^{\prime \prime }}$,\ $\mathcal{E}^{\prime }$ \ as\ push-out
and pull-back, respectively. We are also adopting MacLane's designation of
the resulting extensions $\mathcal{E}^{^{\prime \prime }}=\alpha _{\ast }%
\mathcal{E}$,\ $\mathcal{E}^{\prime }=\gamma ^{\ast }\mathcal{E}$ as,
respectively, $\alpha \mathcal{E}$\ and $\mathcal{E}\gamma $; we shall,
correspondingly, denote by $\alpha E$\ and $E\gamma $\ the middle terms
resulting from the original $E$. It is also important to understand $\alpha 
\mathcal{E}$, resp. $\mathcal{E}\gamma $,\ as the obstruction for extending $%
\alpha $\ to $E$, resp. for lifting $\gamma $\ to $E$, and the obstruction
homomorphisms $\tau ^{\ast }=\tau _{\mathcal{E}}^{\ast }:Hom_{R}\left(
C^{^{\prime }},C\right) \ni \gamma \longmapsto \mathcal{E}\gamma \in
YExt_{R}\left( C^{\prime },A\right) $, $\tau _{\ast }=\tau _{\ast \mathcal{E}%
}:Hom_{R}\left( A,A^{\prime \prime }\right) \ni \alpha \longmapsto \alpha
_{\ast }\mathcal{E=}\alpha \mathcal{E}\in YExt_{R}\left( C,A^{\prime \prime
}\right) $ as the connecting homomorphisms for the $\mathcal{E}$-derived
covariant, resp. contravariant, long exact sequences, see \cite[Chapter III,
section 3 \ - in particular lemmata 3.1 \& 3.3]{ML} - where also notably its
involved higher steps (or at least the first one, involving 1-extensions)
are realized in a somehow self-contained manner, i.e. just by using
extension classes of sequences, with no reference to the proper concept of
the derived functors: We have of course also the Yoneda identification of
the above mentioned functors $YExt$ with the derived functors $Ext$, see for
example \cite[III 2.4, IV 9.1]{ML}: "There is natural equivalence between
the set-valued bifunctors $YExt_{R}^{n}\left( C,A\right) $ (of equivalence
classes of exact $n$-sequences from $C$ to $A$)\textit{\ and }$%
Ext_{R}^{n}\left( M,A\right) $, $n=1,2,...$."

We are now going to specialize even more: Let $A^{^{\prime \prime }}=A$, $%
C^{\prime }=C$ be simple $R$-modules. Assume also that $\mathcal{E}$ is
non-split.\ Then $\alpha $, $\gamma $ are automorphisms (unless they are 0)
and\ the middle terms $E^{^{\prime \prime }}=\alpha E$,\ $E^{\prime
}=\allowbreak E\gamma $ of the (non-congruent to $\mathcal{E}$, unless those
automorphisms are the identity maps!) short exact sequences $\mathcal{E}%
^{^{\prime \prime }}=\alpha \mathcal{E}$,\ $\mathcal{E}^{\prime }=\mathcal{E}%
\gamma $\ are isomorphic to $E$ (5-lemma), where $E$ is uniserial of length
2, with series $%
\begin{array}{c}
C \\ 
A%
\end{array}%
$.\ The middle term of $\mathcal{E}\gamma $\ is the pull-back of $\gamma $
and $\sigma :E\longrightarrow C$\ in $\mathcal{E}$, while the middle term of 
$\alpha E$\ is the push-out of $\alpha $\ and the map $A\longrightarrow E$\
in $\mathcal{E}$.\ 

We have first to define "upper and lower proportionals", respectively of
type $\mathcal{E}\gamma $\ and $\alpha \mathcal{E}$, as two distinct
concepts; it is immediate to see that each of them defines an equivalence
relation: we shall accordingly speak of "upper" or "lower" proportionality.

\bigskip From the short exact sequence $\mathcal{E}$ above\ we get by
applying the functor $Hom\left( C,\_\right) $ the long exact sequence (\cite[%
III 3.4]{ML}) of abelian groups

$0\rightarrow Hom_{R}\left( C,A\right) \rightarrow Hom_{R}\left( C,E\right)
\rightarrow Hom_{R}\left( C,C\right) \rightarrow YExt_{R}^{1}\left(
C,A\right) \rightarrow YExt_{R}^{1}\left( C,E\right) \rightarrow ...$ ,
giving here the (mono-, if $C$ \ and $A$ are non-isomorphic)\ morphism $\tau
^{\ast }=\tau _{\mathcal{E}}^{\ast }:Hom_{R}\left( C,C\right)
\longrightarrow YExt_{R}^{1}\left( C,A\right) $, which is\ implemented by
the assignment $\gamma \mapsto \gamma ^{\ast }\mathcal{E=E}\gamma $ \textit{%
in the way mentioned in the previous paragraph}. By (a variant of) Schur's
lemma is $Hom_{R}\left( C,C\right) \cong \mathfrak{D}_{c}$, a division ring.
It is easily verified that $\left( \gamma \circ \gamma ^{\prime }\right)
^{\ast }=\gamma ^{\prime \ast }\circ \gamma ^{\ast }$, i.e. $\mathcal{E}%
\left( \gamma ^{\prime }\gamma \right) =\left( \mathcal{E}\gamma ^{\prime
}\right) \gamma $, and that $\mathcal{E}\left( id_{\gamma }\right) =\mathcal{%
E}$,\bigskip\ for any extension class $\mathcal{E}$\ in $YExt_{R}^{1}\left(
C,A\right) $, meaning that:

\begin{lemma}
\label{D-a}$\mathfrak{D}_{c}$ acts$\ $on $YExt_{R}^{1}\left( C,A\right) $\
from the right. In particular we get that $\left( \mathcal{E}\gamma \right)
\gamma ^{-1}=\mathcal{E}$ \ \textbf{(1)}.
\end{lemma}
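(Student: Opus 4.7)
The plan is to verify the two defining axioms of a right action of the division ring $\mathfrak{D}_c\cong\mathrm{Hom}_R(C,C)$ on the set $YExt_R^1(C,A)$: compatibility with composition and triviality of the identity. Both facts are actually already announced in the paragraph immediately preceding the lemma statement, so the task is simply to justify them from the universal property of the pull-back. The consequence $(\mathcal{E}\gamma)\gamma^{-1}=\mathcal{E}$ will then drop out by taking $\gamma^{-1}\in\mathfrak{D}_c$, which exists because $C$ is simple and nonzero endomorphisms of a simple module are automorphisms by Schur's lemma.

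First I would check that the assignment $\gamma\longmapsto\mathcal{E}\gamma$ descends to the equivalence classes on both sides: replacing $\mathcal{E}$ by a congruent sequence produces an isomorphic pull-back by the universal property, and the resulting middle term fits into a congruent extension. Next, for the composition axiom $\mathcal{E}(\gamma'\gamma)=(\mathcal{E}\gamma')\gamma$, I would invoke the pasting lemma for pull-back squares: given composable maps $C''\xrightarrow{\gamma}C'\xrightarrow{\gamma'}C$ and the epimorphism $\sigma\colon E\twoheadrightarrow C$ from $\mathcal{E}$, the iterated pull-back (first along $\gamma'$, then along $\gamma$) coincides up to a canonical isomorphism with the single pull-back along $\gamma'\circ\gamma$; this is exactly \cite[III.1]{ML}. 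For the identity axiom $\mathcal{E}\,\mathrm{id}_C=\mathcal{E}$, the pull-back of $\sigma$ along $\mathrm{id}_C$ is just $\sigma$ itself, so the resulting extension is congruent to $\mathcal{E}$.

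Once the right action axioms are in hand, the particular assertion is immediate: since $C$ is simple, Schur's lemma guarantees that a nonzero $\gamma\in\mathrm{Hom}_R(C,C)$ is invertible in $\mathfrak{D}_c$, and then
\[
(\mathcal{E}\gamma)\gamma^{-1}=\mathcal{E}(\gamma\gamma^{-1})=\mathcal{E}\,\mathrm{id}_C=\mathcal{E}.
\]
There is no real obstacle here beyond being careful about the direction of the action (the contravariant functoriality $(\gamma'\circ\gamma)^*=\gamma^*\circ\gamma'^*$ is what makes it a \emph{right}, rather than left, action), since everything reduces to the standard universal-property manipulations of pull-backs already cited in the paper.
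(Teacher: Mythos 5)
Your proposal is correct and mirrors the paper's own route: the paper asserts in the paragraph preceding the lemma exactly the two action axioms $\mathcal{E}(\gamma'\gamma)=(\mathcal{E}\gamma')\gamma$ and $\mathcal{E}\,\mathrm{id}_C=\mathcal{E}$, and then deduces $(\mathcal{E}\gamma)\gamma^{-1}=\mathcal{E}$ by invertibility of nonzero $\gamma$ (Schur's lemma, since $C$ is simple). You merely make explicit the pull-back pasting argument behind those two identities, which the paper leaves to the reader with the phrase ``it is easily verified''.
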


\bigskip This action is in general explainable by means of pull-back; in
this case it turns out to give the same middle term $E\gamma $ (see below,
proposition \ref{sam}).

Dually, through application of the functor $Hom\left( \_,A\right) $\ on the
above short exact sequence $\mathcal{E}$ we get a long exact sequence (\cite[%
III 3.2]{ML}), which yields the monomorphism \ $\tau _{\ast }=\tau _{\ast 
\mathcal{E}}:Hom_{R}\left( A,A\right) \rightarrowtail YExt_{R}^{1}\left(
C,A\right) $, amounting in terms of $YExt_{R}^{1}\left( C,A\right) $ to what
has explicitly been explained above by the annotation $\alpha \mapsto \alpha
_{\ast }$, where again $Hom_{R}\left( A,A\right) \ $is a division ring $%
\mathfrak{D}_{A}$. Also in this case it turns out that the middle term of
the obtained extension is virtually the same as the old one (again by
proposition \ref{sam}).

\bigskip In that way we get two different actions\ on $YExt_{R}^{1}\left(
C,A\right) $, one by taking automorphisms on $C$, the other one through
automorphisms of\ $A$. By looking at the second action, giving rise to the
notion of lower proportional extensions of $C$ (compare also with \cite{StG}%
), let us consider the projective cover $P$ of $C$; let us now further
assume that $R$\ is\ Artinian. Then it is well known (see for example \cite[%
Cor. 2.5.4]{DB}) that we have the following isomorphisms of $R$-modules:

\textit{Given an Artinian ring }$R$\textit{, the }$R$\textit{-modules }$A$%
\textit{, }$M$\textit{\ where }$A$\textit{\ is simple, for }$n>0$\textit{\
we have } the following isomorphisms of $R$-modules:\textit{\ }$%
Ext_{R}^{n}\left( M,A\right) $ $\cong $\ $Hom_{R}\left( \Omega
^{n}M,A\right) $\textit{, }$Ext_{R}^{n}\left( A,M\right) \cong Hom_{R}\left(
A,\Omega ^{-n}M\right) $\textit{. }\textbf{In particular,}

\begin{remark}
\label{RNE}\textit{\ (i) We have an 1-1 correspondence (in fact, \textbf{a
natural equivalence of set-valued bifunctors}) between} $YExt_{R}^{1}\left(
C,A\right) $,\ $Hom_{R}\left( \Omega C,A\right) $ and

$Hom_{R}\left( C,\Omega ^{-1}A\right) $ (see for example \cite[III 6.4]{ML}%
), where, by\textit{\ further} \textit{assuming the existence of projective
covers and injective hulls in our category of modules,} $\Omega C=radP$ and $%
\Omega ^{-1}A=I/socI$, with $P$ the projective cover of $C$, $I$\ the
injective hull of $A$.

(ii) From now on we assume that in our category of modules there exist both
projective covers and injective hulls.
\end{remark}

That means that, by considering a virtual radical series of $P/rad^{2}P$, we
get an identification of $YExt_{R}^{1}\left( C,A\right) $\ with $%
Hom_{R}\left( \Sigma A,A\right) $, where $\Sigma A$ is\ the $A$-part of $P$%
's second radical layer. We are going to investigate how this identification
becomes a natural $\mathfrak{D}$-module isomorphism (where $\mathfrak{D}=%
\mathfrak{D}_{A}$), by viewing $YExt_{R}^{1}\left( C,A\right) $ as a (left) $%
\mathfrak{D}$-module, by virtue of the above mentioned "$\mathfrak{D}$%
-proportionality action" on $A$. Let $\Sigma A\cong A^{m}$, i.e. a sum of $m$
copies of the irreducible $A$; from now on we shall be looking at those
copies as virtually identifiable (\cite{StG}).

On the other hand it is clear that $Hom_{R}\left( \Omega C,A\right)
=Hom_{R}\left( radP,A\right) \cong Hom_{R}\left( radP/rad^{2}P,A\right)
\cong Hom_{R}\left( \Sigma A,A\right) \cong \mathfrak{D}^{m}$.

\bigskip We shall at first see, why we may view the proportionals of any
factor module $E\ $of $P$ of the type $%
\begin{array}{c}
C \\ 
A%
\end{array}%
$ as factor modules of $P$ again. So, we are looking at the short exact
sequence $0\rightarrow A\longrightarrow E\longrightarrow C\rightarrow 0$,
while viewing $A$,$\ E$ and, of course,$\ C$\ as virtually determined w.r.t. 
$P$. \ Denote by $\mathcal{E}$ the extension class of the above sequence,
then we will denote the extension module of $\gamma ^{\ast }\mathcal{%
E=E\gamma }$\ by $E\gamma $, that of $\alpha _{\ast }\mathcal{E=\alpha E}$\
by $\alpha E$.

\textit{From our virtual point of view this consideration of virtual middle
extension terms as factors of projective covers (or, dually, as submodules
of injective hulls) is formulated as the following concern, which is also
fundamental for our whole setup:}

\begin{problem}
The projective property of the projective cover $P$ of $C$\ guarantees that
any extension of $C$ by a simple $A$ may be realized as a quotient module of 
$P$. How can we get a "virtual" overview of those quotients, compared to the
respective extension classes?

Dually, the injective property of the injective hull $I$ of $A$\ ensures
that any extension of any simple module $C$\ by $A$\ may be viewed as a
submodule of $I$. How can we get a "virtual" overview of those submodules,
compared to the respective extension classes?
\end{problem}

\bigskip It is crucial to realize that in any such 1-extension the above
suggested epimorphism must in the virtual context be viewed as the
composition of the canonical epimorphism by the isomorphism (indeed
automorphism, as we are going to see) induced by an automorphism of $C$
(compare to lemma \ref{D-a}); we may likewise (dually) view the suggested
monomorphism into $I$ as the composition of a natural inclusion with an
isomorphism induced (through a \textit{virtual} push out) by an automorphism
of $A$.

This suggests that one and the same (virtually!) module, let us say here,
one with series $%
\begin{array}{c}
C \\ 
A%
\end{array}%
$, is not just assignable to one but to a whole family of ("proportional",
as we are going to see) extensions! We may even occasionally allow ourselves
to use different letters for the same module, inasmuch as it is drawn into
different extensions. In the following we want to show a kind of converse to
this assertion, namely that proportional extensions are realizable by 
\textbf{the same} (not just an isomorphic!) module: But such a statement may
only make sense inside the frame of a virtual category! \textit{This
suggests that the (proper to the case) virtual category is actually the
proper frame to consider extensions in.}

Let so $P=P_{C}$ be the projective cover of $C$\ as above, and apply $%
Hom_{R}\left( P,\_\right) $ on $\mathcal{E}$:\ Assume first that $A\ncong C$%
,\ implying that $Hom_{R}\left( P,A\right) $=0=$YExt_{R}^{1}\left(
P,E\right) $,\ which through the above application of $Hom_{R}\left(
P,\_\right) $ on $\mathcal{E}$ yields an $R$-module isomorphism (2) $%
Hom_{R}\left( P,E\right) \widetilde{\rightarrow }Hom_{R}\left( P,C\right) $%
,\ through the canonical $\sigma :E\twoheadrightarrow C$ in $\mathcal{E}$.
Notice that this relationship alone would in this case be enough to assure
the statement of the following lemma!

It is also clear that $Hom_{R}\left( P,C\right) \cong End_{R}C$, naturally
given by the assignment $End_{R}C\ni \gamma \longmapsto \gamma \circ \rho
\in Hom_{R}\left( P,C\right) $, where $\rho $\ is the canonical epimorphism $%
P\twoheadrightarrow C$, inducing $id_{C}:Hd\left( P\right) \longrightarrow C$%
, \textit{in virtual terms}. Referring to the virtual category, let us
consider the middle term $E$\ of $\mathcal{E}$\ as a quotient of $P$\ by a
certain submodule $L$. Then we fix the canonical epimorphism $%
P\twoheadrightarrow C\ $ as the one corresponding to $\mathcal{E}$,\ while
its "distortions" by automorphisms of $C$ shall give its "upper
proportional" extensions; we illustrate this discussion by the following
diagram, in which we are also using (1) from lemma \ref{D-a} (observe that,
as a consequence of that lemma, $\widetilde{\gamma ^{-1}}=\tilde{\gamma}%
^{-1} $):\ 

\bigskip $%
\begin{array}{ccccc}
&  &  &  & P_{{}}^{{}} \\ 
&  &  & \swarrow \tau _{_{{}}\frame{}} & \downarrow ^{\rho } \\ 
\mathcal{E}:A\frame{} & \rightarrowtail & E\frame{} & \frac{{}}{\sigma }%
\twoheadrightarrow & C_{{}}\frame{} \\ 
^{_{id_{A}}}\downarrow ^{{}} &  & ^{{}}\downarrow ^{\tilde{\gamma}^{-1}} & 
& ^{{}}\downarrow ^{\gamma ^{-1}} \\ 
\mathcal{E}\gamma :A\frame{}\frame{} & \rightarrowtail & E\gamma \frame{} & 
\twoheadrightarrow & C_{{}}\frame{}%
\end{array}%
$ \ \ \ \ We are pointing out that the (uniquely\bigskip

through $\mathcal{E}$\ and $P\twoheadrightarrow C$\ determined) homomorphism 
$\tau :P\rightarrow E$ has to be surjective, inasmuch as it cannot split, $%
\rho \neq 0$ and $E$\ is "of type" $%
\begin{array}{c}
C \\ 
A%
\end{array}%
$ \ and not semisimple.

But also in case $A\cong C$\ we can maintain uniqueness of a $\tau
:P\rightarrow E$\ attached to some $P\twoheadrightarrow C$\ by the
requirement that our \ $\tau :P\rightarrow E$ be surjective and induce the
identity map on the socle level of the $\tau $-induced isomorphism $P\diagup
\ker \tau \rightarrow E$. However that requirement is contained in the
definition of upper proportionality! That completes the proof of the
following lemma.

On the other hand, to view the situation in rather set-theoretical terms
too, the crucial observation to make here is that, not only is $\ker \left(
P\twoheadrightarrow E\gamma \right) =\ker \left( P\twoheadrightarrow
E\right) $ but, furthermore, the composite $P\twoheadrightarrow E\gamma $ is
"almost" the canonical epimorphism $P\twoheadrightarrow E$, since it twists
the canonical one by an automorphism $\widetilde{\gamma ^{-1}}=\tilde{\gamma}%
^{-1}$, coinduced by $\gamma ^{-1}$\ and inducing the identity on the socle $%
A$ of $E$, thus just meaning a "reordering" of the $A$-cosets in $E$: it is
still the same set! If we conversely start with an isomorphism from an
object $\mathcal{E}^{\prime }$ to another $\mathcal{E}$ in $%
YExt_{R}^{1}\left( C,A\right) $, with $C$, $A$\ simple modules, with the
identity map on $A$, then it induces some automorphism $\gamma ^{\prime }$
on $C$, so that $\mathcal{E}^{\prime }=\mathcal{E\gamma }^{\prime }$.

That means that:

\begin{lemma}
\label{Upr}An upper proportionality class in $YExt_{R}^{1}\left( C,A\right) $%
\ is "virtually determined" by a certain quotient of the projective cover of 
$C$, which in turn is of course determined by a certain submodule, to be
called the "proportionality kernel".
\end{lemma}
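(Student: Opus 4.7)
The plan is to attach to every representative of a given upper proportionality class a canonical surjection $P \twoheadrightarrow E$ and then to show that its kernel depends only on the class. Fix a non-split representative $\mathcal{E}: 0 \to A \to E \overset{\sigma}{\to} C \to 0$ and the canonical epimorphism $\rho: P \twoheadrightarrow C$ (so $P = P_C$). By projectivity of $P$ there exists $\tau: P \to E$ with $\sigma \circ \tau = \rho$. Since $\rho \neq 0$, $\tau$ is nonzero; and since $E$ has simple head $C$ and $\tau$ maps $Hd(P) = C$ isomorphically onto $Hd(E) = C$, the image of $\tau$ must meet the head of $E$, forcing $\tau$ to be surjective (it cannot land in the socle $A$, and it cannot factor through a proper submodule of $E$ because $E$ is of length $2$ with simple head $C$).

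Next I would establish the appropriate uniqueness of $\tau$. In the generic case $A \not\cong C$, applying $Hom_R(P,\_)$ to $\mathcal{E}$ gives the fragment
\[ Hom_R(P,A) \to Hom_R(P,E) \to Hom_R(P,C) \to YExt_R^1(P,A) \]
whose outer terms both vanish: $Hom_R(P,A) = 0$ because $Hd(P) = C$ and $A, C$ are non-isomorphic simples, while $YExt_R^1(P,A) = 0$ by projectivity of $P$. Hence $\tau$ is the \emph{unique} lift of $\rho$ through $\sigma$. In the degenerate case $A \cong C$, $Hom_R(P,A)$ no longer vanishes, so lifts of $\rho$ differ by a map into $A$; I would then invoke the additional normalization, which is precisely what upper proportionality encodes, that $\tau$ induces the identity on the socle copies of $A$ under the isomorphism $P/\ker\tau \widetilde{\to} E$. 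This pins down $\tau$ uniquely in both cases.

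With $\tau$ fixed, I would handle the whole proportionality class at once. For $\gamma \in \mathfrak{D}_c = End_R(C)^\times$, the representative $\mathcal{E}\gamma$ comes equipped with an isomorphism $\widetilde{\gamma}^{-1}: E \widetilde{\to} E\gamma$ that restricts to $id_A$ on the socle and to $\gamma^{-1}$ on the head, making the rightmost square of the diagram displayed before the lemma commute. Consequently the canonical surjection $P \twoheadrightarrow E\gamma$ attached to the $\mathcal{E}\gamma$-representative is exactly $\widetilde{\gamma}^{-1} \circ \tau$. Because $\widetilde{\gamma}^{-1}$ is an isomorphism, $\ker(\widetilde{\gamma}^{-1} \circ \tau) = \ker \tau$. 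Therefore every member of the upper proportionality class of $\mathcal{E}$ is virtually realized by the same quotient $P/\ker\tau$ of $P$, and the submodule $L := \ker\tau$ is a well-defined invariant of the class — the sought-after proportionality kernel.

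The main obstacle I expect is the case $A \cong C$, where uniqueness of $\tau$ is not formal (the long exact sequence argument fails) and has to be reconciled with the definition of upper proportionality itself, so that the normalization "identity on the socle" is legitimately part of the data rather than an extra choice. Everything else is essentially a bookkeeping exercise with the lifting diagram.
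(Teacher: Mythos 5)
Your proposal is correct and takes essentially the same route as the paper: projectivity yields a surjective lift $\tau \colon P\twoheadrightarrow E$ of $\rho$ through $\sigma$, uniqueness is handled by the long exact $Hom_R(P,\_)$-sequence when $A\not\cong C$ and by the socle-identity normalization built into upper proportionality when $A\cong C$, and every $\mathcal{E}\gamma$ is realized via $\tilde{\gamma}^{-1}\circ\tau$, whose kernel equals $\ker\tau$. This mirrors the discussion and diagram the paper places immediately before Lemma \ref{Upr}.
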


\bigskip

It turns out to be futile, if we attempt to virtually determine "lower
proportionality" classes in $YExt_{R}^{1}\left( C,A\right) $, as quotients
of the projective cover of $C$, again by just looking at obstructions.
Instead of that, we now turn toward the proof for the Yoneda equivalence $%
Yon:Ext\rightarrow YExt$:

Let us denote by $\Sigma A$ the $A$-part of the second radical layer $%
radP\diagup rad^{2}P$\ of $P$. The Yoneda correspondence establishes a
bijection between $YExt_{R}^{1}\left( C,A\right) $ and\ $Hom_{R}\left(
\Omega C,A\right) =Hom_{R}\left( radP,A\right) \cong $\ $Hom_{R}\left(
radP\diagup rad^{2}P,A\right) \cong $

$\cong Hom_{R}\left( \Sigma A,A\right) $. Let us look closer at how
extension classes in $YExt_{R}^{1}\left( C,A\right) $ may be identified by
homomorphisms $\widetilde{\alpha }:\Sigma A\rightarrow A$: Such a
homomorphism has to split, due to semisimplicity of $\Sigma A$. There is
therefore a well defined submodule $\widetilde{A}$ $\left( \cong A\right) $%
of $\Sigma A$, henceforth to be called \textit{the support of }$\widetilde{%
\alpha }$, such that (3) $\Sigma A=\ker \widetilde{\alpha }\oplus \widetilde{%
A}$\ - and so that we may identify $\widetilde{\alpha }$\ by the submodule $%
\widetilde{A}$ of $\Sigma A$ (in fact, also identifiable as $Coim\left( 
\widetilde{\alpha }\right) $) and its (isomorphic, since it is onto and we
are in an exact category) restriction $\mathrm{a}:\widetilde{A}\rightarrow A$
to it.

Let us again return to an epimorphism $\zeta :\Omega C=radP\rightarrow A$ of
kernel, say, $L_{\zeta }$: Then $\zeta $ factors through an induced
isomorphism $Coim\left( \zeta \right) \widetilde{\rightarrow }A$, where $%
Coim\left( \zeta \right) =\Omega C\diagup \ker \zeta =\Omega C\diagup
L_{\zeta }$, by means of the canonical $\xi :\Omega C\twoheadrightarrow
\Omega C\diagup L_{\zeta }$. Write also $radP\diagup rad^{2}P=\Sigma A\oplus
N$, $\rho :radP\rightarrow radP\diagup rad^{2}P$ for the canonical
epimorphism there and set $R^{\prime }:=\rho ^{-1}\left( N\right) $, then
take as $\widetilde{\alpha }:\Sigma A\rightarrow A$ that $\zeta $\
considered modulo $R^{\prime }$: inasmuch as $Coim\left( \zeta \right) $\
considered modulo $R^{\prime }$ is virtually nothing but the $Coim\left( 
\widetilde{\alpha }\right) =\widetilde{A}$ we have seen above, and this $%
\zeta $-induced isomorphism $Coim\left( \zeta \right) \widetilde{\rightarrow 
}A$, which $\zeta $\ factors through and which also (together with $%
Coim\left( \zeta \right) $, of course) determines $\zeta $\ completely.
Therefore $\zeta $ may naturally and unambiguously be identified by this $%
\widetilde{\alpha }:\Sigma A\rightarrow A$, which precisely induces $\mathrm{%
a}$\ on its "support" $\widetilde{A}$: if we call $p$\ the projection of $%
\Sigma A\ $onto\ $\widetilde{A}$\ along the decomposition (3), then we have $%
\widetilde{\alpha }=\mathrm{a}\circ p$ - and $\mathrm{a}=\widetilde{\alpha }%
|_{\tilde{A}}$.

Also, we may write $\xi =p\circ r$, where $r:\Omega C\longrightarrow \Omega
C\diagup R^{\prime }=\Sigma A$ is the canonical epimorphism. Hence is $\zeta
=\mathrm{a}\circ \xi =\mathrm{a}\circ p\circ r$.\ \ 

So the epimorphism $\zeta :\Omega C=radP\rightarrow A$ of kernel $L_{\zeta }$
induces $\mathrm{a}$\ on $\widetilde{A}$, and $\zeta $ may be viewed as
gotten through twisting the canonical epimorphism $\xi :radP\rightarrow 
\widetilde{A}$ (with $\widetilde{A}$\ virtually equal to $radP\diagup
L_{\zeta }$)\ by $\mathrm{a}$ (i.e. $\zeta =\mathrm{a}\circ \xi $),
similarly to what we have seen in the previous case.\ It is clear that $%
L_{\zeta }=$\ $\rho ^{-1}\left( N\oplus \ker \widetilde{\alpha }\right) $.
By defining the canonical epimorphism $\rho _{0}:P\rightarrow P\diagup
rad^{2}P$, we see that $L_{\zeta }=$\ $\rho _{0}^{-1}\left( N\oplus \ker 
\widetilde{\alpha }\right) $\ as well.

We identify for a moment $A$\ with $\widetilde{A}$, so that we may virtually
look at arbitrary lower proportionals.\ Set $E=P\diagup L_{\zeta }$, and
call $E_{\mathrm{a}}$\ its lower $\mathrm{a}$-proportional (to be denoted as 
$\mathrm{a}E$, like\ above), i.e. $E_{\mathrm{a}}$\ is a push-out, but a
virtual one, as in the diagram\ \ \ \ \ \ \ \ \ \ \ \ \ \ \ \ \ \ \ \ 

\bigskip

$%
\begin{array}{ccccc}
\mathcal{E}:\widetilde{A}\frame{} & \rightarrowtail & E\frame{} & \frac{%
\sigma }{{}}\twoheadrightarrow & C_{{}}\frame{} \\ 
\frame{}^{\mathrm{a}}\downarrow ^{{}} &  & ^{{}}\downarrow ^{{}} &  & 
^{{}}\downarrow ^{id_{C}} \\ 
\mathrm{a}\mathcal{E}:\widetilde{A}\frame{}\frame{}\frame{}_{{}} & 
\rightarrowtail & E_{\mathrm{a}}\frame{} & \twoheadrightarrow & C_{{}}\frame{%
}%
\end{array}%
$ . It is namely clear that the quotient of $P$,

\bigskip

in which the extension $\mathrm{a}\mathcal{E}$\ is realizable as precisely $%
P\diagup L_{\zeta }$, i.e. the same as with $\mathcal{E}$; indeed, in the
virtual category $\mathfrak{V}\left( P\right) $ of $P$ we may look at those
middle terms of extensions as quotients of that quotient $E_{12}$ of $P$,
which results from factoring out the submodule that is largest possible,
under the condition that both $E$\ and $E_{\alpha }$ still\ be quotients
(virtually meant) of that quotient. That quotient $E_{12}$ is easily seen to
be the pull-back of $E$\ and $E_{\mathrm{a}}$\ over the top $C$ in $%
\mathfrak{V}\left( P\right) $.

If $E$\ and $E_{\mathrm{a}}$\ were different quotients of $P$,\ then that
pull-back in the virtual category of $P$\ would have the diagrammatic form \ 
$%
\begin{array}{c}
\diagup \diagdown%
\end{array}%
$, with $C$ on top; however this is contradicted by the fact that the
pull-back of $E$\ and $E_{\mathrm{a}}$\ is just $E$, as it is immediately
verified by the universal property from the diagram:

\bigskip $%
\begin{array}{ccc}
&  &  \\ 
&  &  \\ 
&  & 
\end{array}%
\begin{array}{ccc}
E & \frac{id_{E}}{{}}\longrightarrow & E \\ 
\downarrow & \swarrow _{{}} & \downarrow \\ 
E_{\mathrm{a}}\  & \frac{{}}{{}}\longrightarrow & C%
\end{array}%
$

Therefore $E_{\mathrm{a}}$\ in $\mathrm{a}\mathcal{E}$ is realized as the
same quotient module of $P$, as $E=P\diagup L_{\zeta }$ in $\mathcal{E}$.

Seen in a more set-theoretic way,$\ E$\ is equal to \ $\tbigcup\limits_{x\in
C}\sigma ^{-1}\left( x\right) $, each $\sigma ^{-1}\left( x\right) $\ being
an $\widetilde{A}$-coset, while the meaning of the identity being induced in
the previous diagram, that shows the isomorphism $\mathcal{E}\longrightarrow 
\mathrm{a}\mathcal{E}$, is that $E_{\mathrm{a}}$ still consists of precisely
the same (set-theoretically!) cosets, where only they are "rearranged", by
application of the automorphism $\mathrm{a}^{-1}$\ on $\widetilde{A}$.

\bigskip We now illustrate the situation with the following diagram:

\bigskip $%
\begin{array}{cc}
&  \\ 
&  \\ 
&  \\ 
&  \\ 
&  \\ 
&  \\ 
&  \\ 
&  \\ 
&  \\ 
&  \\ 
& 
\end{array}%
$($D_{1}$) $\ 
\begin{array}{ccccccccc}
&  & 0 &  & 0 &  &  &  &  \\ 
&  & \downarrow &  & \downarrow &  &  &  &  \\ 
&  & L_{\zeta } & = & L_{\zeta } &  &  &  &  \\ 
&  & \downarrow &  & \downarrow &  &  &  &  \\ 
\frame{}_{{}}0 & \rightarrow & \Omega C & \longrightarrow & P & 
\longrightarrow & C_{{}} & \rightarrow & 0 \\ 
&  & _{{}}\downarrow ^{\xi } &  & \downarrow &  & _{{}}\frame{}\downarrow
^{_{id_{C}}} &  &  \\ 
\mathcal{E}:0 & \rightarrow & \widetilde{A} & \longrightarrow & E & 
\longrightarrow & C & \rightarrow & 0 \\ 
&  & \downarrow ^{\mathrm{a}} &  & \downarrow &  & _{{}}\frame{}\downarrow
^{_{id_{C}}} &  &  \\ 
\mathrm{a}\mathcal{E}:0 & \rightarrow & A & \longrightarrow & E_{\mathrm{a}}
& \longrightarrow & C & \rightarrow & 0 \\ 
&  & \downarrow &  & \downarrow &  & \downarrow &  &  \\ 
&  & 0 &  & 0 &  & 0 &  & 
\end{array}%
$ \ \ \ ($\zeta =\mathrm{a}\circ \xi $, $\ker \zeta =\ker \xi =L_{\zeta }$)

Notice that unless $A$\ be\ identified with $\widetilde{A}$,\ $\mathrm{a}%
\mathcal{E}$\ is\ somehow "abstract": we shall however use the above
discussion and diagram ($D_{1}$) as a kind of springboard, to enhance its
scope in a virtual\ direction:\ \ \ \ \ \ \ \ \ \ \ \ \ \ \ \ \ \ \ \ 

\bigskip We may identify the automorphisms of $\widetilde{A}$\ with those of 
$A$, by fixing an (arbitrary!) isomorphism; such an identification is
therefore only conceivable up to an automorphism of $A$. We shall call such
an automorphism-identifying isomorphism an \textit{identomorphism}. However
this relativity is a consideration that might only be taken when comparing
identomorphisms\ to the same $A$ of supports of different such epimorphisms $%
\Sigma A\rightarrow A$, while this issue remains anyway insignificant when
talking of "lower proportionals" of the same extension, as the approach is
then done by fixing just one such identomorphism.

That is, having established such an identification $j:\widetilde{A}%
\rightarrow A$ for our $\widetilde{A}$\ here, we may then view $\mathrm{a}$\
in the above diagram ($D_{1}$) as equal to $\alpha \circ j$, for an element $%
\alpha $\ of $AutA$, thus also identifying $j\mathcal{E}$\ with $\mathcal{E}$%
,\ so that we may now get all the lower proportionals of $\mathcal{E}$\
through automorphisms of $A$.

\bigskip\ \ \ $%
\begin{array}{cc}
&  \\ 
&  \\ 
&  \\ 
&  \\ 
&  \\ 
&  \\ 
&  \\ 
&  \\ 
&  \\ 
&  \\ 
& 
\end{array}%
$($D_{2}$) $%
\begin{array}{ccccccccc}
&  & 0 &  & 0 &  &  &  &  \\ 
&  & \downarrow &  & \downarrow &  &  &  &  \\ 
&  & L_{\zeta } & = & L_{\zeta } &  &  &  &  \\ 
&  & \downarrow &  & \downarrow &  &  &  &  \\ 
\frame{}_{{}}0 & \rightarrow & \Omega C & \longrightarrow & P & 
\longrightarrow & C_{{}} & \rightarrow & 0 \\ 
&  & _{{}}\downarrow ^{\xi } &  & \downarrow &  & _{{}}\frame{}\downarrow
^{_{id_{C}}} &  &  \\ 
\mathcal{E}:0 & \rightarrow & \widetilde{A} & \longrightarrow & E & 
\longrightarrow & C & \rightarrow & 0 \\ 
&  & \downarrow ^{j} &  & \downarrow &  & _{{}}\frame{}\downarrow
^{_{id_{C}}} &  &  \\ 
\mathcal{E}:0 & \rightarrow & A & \longrightarrow & E & \longrightarrow & C
& \rightarrow & 0 \\ 
&  & \downarrow ^{\alpha } &  & \downarrow &  & \downarrow &  &  \\ 
\alpha \mathcal{E}:0 & \rightarrow & A & \longrightarrow & E_{\alpha } & 
\longrightarrow & C & \rightarrow & 0 \\ 
&  & \downarrow &  & \downarrow &  & \downarrow &  &  \\ 
&  & 0 &  & 0 &  & 0 &  & 
\end{array}%
$

\bigskip

\begin{remark}
\label{lpr}It must be here observed that, in identifying an extension class
in $YExt_{R}^{1}\left( C,A\right) $ by means of an arbitrary epimorphism $%
\Sigma A\rightarrow A$, where $A$ is, say, a "prototypic isomorphic copy",
this may at first glance seem only to be possible up to lower
proportionality; however we shall show in the following subsection that this
seemingly innate relativity may easily be overcome. \ 
\end{remark}

We may now dualize everything above, by working on $E$\ as embedded in the
injective hull $I$\ of $A$;\ in particular, if $E\gamma $ were realized by a
different submodule of $I$, then the push-out of these two over $A$ would
contain them both properly: however it is easy to verify (again by the
universal property) that this push-out is actually $E$ itself. It may on the
other hand again be precisely seen, how any upper proportional $\mathcal{%
E\gamma }$\ of an extension $\mathcal{E}$ of $C$\ by $A$\ is also a lower
upper proportional $\alpha \mathcal{E}$\ of $\mathcal{E}$, for some
automorphism $\alpha $ of $A$.

We sum our results up (and the similarly deducible dual analogues) in the
following

\begin{proposition}
\bigskip \label{sam}A lower proportionality class in $YExt_{R}^{1}\left(
C,A\right) $\ is "virtually determined" by a certain quotient of the
projective cover $P$ of $C$, as its "virtual middle term", identifiable by a
direct summand of $radP\diagup rad^{2}P$ isomorphic to $A$; dually, an upper
proportionality class in $YExt_{R}^{1}\left( C,A\right) $\ is "virtually
determined" by a certain submodule of the injective hull \ of $A$ as its
"virtual middle term", identifiable by a direct summand of $soc^{2}I\diagup
socI$ isomorphic to $C$, where $I$\ is the injective hull of $A$.

There exists, further, a bijection \textit{between} $YExt_{R}^{1}\left(
C,A\right) $, $Hom_{R}\left( \Omega C,A\right) $\ and $Hom_{R}\left( \Sigma
A,A\right) $\ - and, similarly, a bijection between $YExt_{R}^{1}\left(
C,A\right) $,

\ $Hom_{R}\left( C,\Omega ^{-1}A\right) $ and\ $Hom_{R}\left( C,\Sigma
C\right) $ , where $\Omega C=radP$ and $\Omega ^{-1}A=I/socI$, with $P$ the
projective cover of $C$, $\Sigma A$ is the $A$-part of $radP\diagup rad^{2}P$%
, $\Sigma C$\ the $C$-part of $soc^{2}I\diagup socI$.
\end{proposition}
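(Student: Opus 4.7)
The plan is to assemble the constructions developed in the discussion preceding the statement, rather than to start the analysis anew.

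For the lower-proportionality part, I would begin with an extension class $\mathcal{E}\in YExt_R^1(C,A)$ and invoke the Yoneda equivalence (Remark \ref{RNE}(i)) to represent it by a homomorphism $\zeta:\Omega C = radP\to A$, with kernel $L_\zeta$. Since $A$ is simple, $\zeta$ annihilates $rad^2P$ and hence factors through $radP/rad^2P$; decomposing $radP/rad^2P = \Sigma A \oplus N$ with $N$ carrying no $A$-composition factor, the data of $\zeta$ is concentrated on the support $\widetilde{A}\cong A$ of the induced map $\widetilde{\alpha}:\Sigma A\to A$. The candidate virtual middle term is then $E=P/L_\zeta$. The key step I would carry out explicitly is that every lower proportional $\mathrm{a}\mathcal{E}$ of $\mathcal{E}$ is realized by the \emph{same} quotient of $P$; for this I would use the pull-back argument sketched in the discussion above — the virtual pull-back of $E$ and $E_{\mathrm{a}}$ over $C$ in $\mathfrak{V}(P)$ is, by the universal property (applied to the square with $id_E$ on one side), equal to $E$ itself, forcing $E_{\mathrm{a}}=E$ virtually. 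Thus the lower class is pinned down by the direct summand $\widetilde{A}$ of $\Sigma A$.

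For the upper-proportionality statement (a submodule of $I$ identifiable by a direct summand of $soc^2I/socI$ isomorphic to $C$), I would dualize the entire argument inside the injective hull $I$ of $A$: swap $P\twoheadrightarrow C$ with $A\hookrightarrow I$, $radP/rad^2P$ with $soc^2I/socI$, pull-back with push-out, and lower with upper proportionality. The projective-side incarnation of upper proportionality is already covered by Lemma \ref{Upr}, so no separate work is needed there; every diagram and universal property in the lower-proportional argument transposes cleanly under this dualization.

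For the bijections, Yoneda gives $YExt_R^1(C,A)\cong Hom_R(\Omega C,A)$; simplicity of $A$ forces every such morphism to factor through $radP/rad^2P$, and since $Hom_R(N,A)=0$ it further factors through $\Sigma A$, yielding $Hom_R(\Omega C,A)\cong Hom_R(\Sigma A,A)$. The dual chain $YExt_R^1(C,A)\cong Hom_R(C,\Omega^{-1}A)\cong Hom_R(C,\Sigma C)$ follows identically inside $I$. The main obstacle throughout is not any single computation but the \emph{virtual} bookkeeping: one must consistently track which isomorphic copy of $A$ (resp.\ $C$) is being used at each step inside $P$ (resp.\ $I$), precisely the ambiguity flagged in Remark \ref{lpr}. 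This is absorbed once and for all by fixing the identomorphism $j$ introduced before diagram $(D_2)$, after which the phrase "identifiable by a direct summand" becomes an unambiguous statement in $\mathfrak{V}(P)$ (respectively $\mathfrak{V}(I)$).
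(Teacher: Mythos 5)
Your proposal is correct and follows essentially the same path as the paper: the proposition there is explicitly a summary of the constructions developed in the preceding discussion (the diagrams $(D_1)$, $(D_2)$, the support $\widetilde{A}$, the pull-back/push-out universal-property argument showing $E_{\mathrm{a}}$ coincides virtually with $E$, and the chain $YExt_R^1(C,A)\cong Hom_R(\Omega C,A)\cong Hom_R(radP/rad^2P,A)\cong Hom_R(\Sigma A,A)$ together with its injective-hull dual), which is exactly what you assemble. Your observations that simplicity of $A$ forces $\zeta$ to kill $rad^2P$ and that $Hom_R(N,A)=0$ reduces to $\Sigma A$, and that the identomorphism $j$ absorbs the virtual-bookkeeping ambiguity of Remark~\ref{lpr}, match the paper's reasoning.
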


Now, by taking corollary \ into account, we see that in reality we may
deduce a much stronger conclusion, namely:

\begin{corollary}
Given a module $M$\ and an indecomposable submodule $E$\ of a subquotient of 
$M$,\bigskip having composition length 2, head $C$\ and socle $A$\
(virtually speaking), the virtual $E$ realizes both an upper and a lower
proportionality class of extensions of $A$\ by\ $C$:
\end{corollary}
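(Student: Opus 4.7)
The plan is to combine Corollary \ref{extr} with Proposition \ref{sam}, the point being that the virtual realizations produced by the former are precisely the invariants that classify proportionality orbits in the latter. By Corollary \ref{extr} (renaming the head to $C$ and the socle to $A$ so as to match the conventions of Proposition \ref{sam}), the virtual $E$ is simultaneously realizable as a quotient $P_C/L$ of a projective cover $P_C$ of $C$ over some maximal-depth enclosing submodule of $M$, and as a submodule of an injective hull $I_A$ of $A$ over some maximal-depth enclosing quotient of $M$. Both realizations are intrinsic to $E$ by Lemma \ref{1extr}: they do not depend on the filtration $N\subset K\subset L\subset M$ used to exhibit $E$.

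For the lower proportionality assertion I would then form the short exact sequence $\mathcal{E}:0\to A\to E\to C\to 0$ and view it as an element of $YExt_R^1(C,A)$. The analysis leading to Proposition \ref{sam} (in particular diagram $D_2$) shows that for every $\alpha\in\mathrm{Aut}(A)$, the middle term $E_\alpha$ of $\alpha\mathcal{E}$ is realized, inside $\mathfrak{V}(P_C)$, by exactly the same quotient $P_C/L_\zeta$ as $E$ itself; the push-out merely reshuffles the $A$-cosets of $E$ through $\alpha^{-1}$ without altering the underlying submodule $L_\zeta$. Hence the virtual object $E$ simultaneously realizes every element of the $\mathrm{Aut}(A)$-orbit of $\mathcal{E}$, which is exactly the lower proportionality class.

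Dually, using the injective-hull half of Proposition \ref{sam}, the submodule of $I_A$ realizing $E$ is preserved under the right action of $\mathrm{Aut}(C)$: as shown just before Proposition \ref{sam}, the push-out in $I_A$ of $E$ and $E\gamma$ over $A$ would contain each properly, but the universal property forces it to equal $E$, whence $E\gamma$ is realized by the same submodule of $I_A$ as $E$. Therefore the virtual $E$ also realizes the whole upper proportionality class $\{\mathcal{E}\gamma:\gamma\in\mathrm{Aut}(C)\}$.

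The main delicate point, and the step I would treat with most care, is well-definedness of these two assignments. Classifying an extension class by an arbitrary epimorphism $\Sigma A\to A$ requires fixing an identomorphism $j:\widetilde{A}\to A$, and dually at the head; a priori this makes $\mathcal{E}$ defined only up to \emph{some} element of $\mathrm{Aut}(A)$ (respectively $\mathrm{Aut}(C)$). But by Remark \ref{lpr} any change of identomorphism is absorbed into a further automorphism acting at the socle (resp. head) level, so it merely permutes the representatives inside a single lower (resp. upper) proportionality class, leaving the class itself unchanged. This shows that the two proportionality classes attached to the virtual $E$ are intrinsic to $E$ in $\mathfrak{V}(M)$, which is exactly the content of the corollary.
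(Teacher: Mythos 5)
Your proposal is correct and matches what the paper intends: the (unprinted) proof is exactly the combination of Corollary \ref{extr}, which places the virtual $E$ simultaneously as a quotient of $P_C$ and as a submodule of $I_A$ independently of the choice of filtration (Lemma \ref{1extr}), with Proposition \ref{sam}, which identifies such realizations with lower and upper proportionality classes respectively. Your care about the identomorphism ambiguity and its absorption into the proportionality class via Remark \ref{lpr} is a worthwhile addition, since the paper leaves that implicit.
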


We might finally as well demonstrate a more set-theoretic view of the
identical realizations of the extension modules $E$ and $E\gamma $:

\begin{lemma}
(i) The extension module $E\circ id_{C}\ $is identifiable with $E$. (ii) The
extension module $E\gamma $, for any $\gamma \in AutC$, viewed as a
submodule of $E\times C\ $is in fact $\left( E\circ id_{C}\right) ^{\left(
1,\gamma \right) }$ in the notation of \cite[subsection 4.2]{StG1}\ - and it
is virtually identifiable with $E$ in $P$, as the same quotient module.
\end{lemma}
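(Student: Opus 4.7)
The plan is to handle the two parts separately, using the universal property of the pull-back for each, and then to invoke the already-established virtual identification from proposition \ref{sam} for the final assertion.

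For part (i), I would just unfold the definition of $E \circ id_C = E\gamma$ with $\gamma = id_C$. Recall that the middle term $E\gamma$ is obtained as the pull-back of $\sigma : E \twoheadrightarrow C$ along $\gamma : C \to C$; when $\gamma = id_C$ the pull-back square
\[
\begin{array}{ccc}
E & \xrightarrow{id_E} & E \\
\downarrow & & \downarrow \sigma \\
E & \xrightarrow{\sigma} & C
\end{array}
\]
is obviously commutative, and the universal property forces the candidate to be $E$ itself (up to the unique canonical isomorphism). Hence $E \circ id_C$ is literally the same submodule-of-$E \times C$ as the diagonal copy of $E$, and set-theoretically it is $E$.

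For part (ii), I would first write the pull-back $E\gamma$ explicitly as the fiber product
\[
E\gamma \;=\; \{(e,c) \in E \times C \mid \sigma(e) = \gamma(c)\}.
\]
Then, under the identification of (i) of $E$ with the ``diagonal'' $E \circ id_C = \{(e,c) : \sigma(e) = c\} \subset E \times C$, the submodule $E\gamma$ is precisely what one gets from $E \circ id_C$ by twisting the second coordinate through the automorphism $\gamma$, i.e.\ applying the map $(1, \gamma)$ to the pairs $(e,c)$. This is exactly the meaning of the notation $(E \circ id_C)^{(1,\gamma)}$ from \cite[subsection 4.2]{StG1}, so the identification is just a matter of translating the pull-back formula into that notation.

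For the final assertion, that $E\gamma$ is virtually identifiable with $E$ in $P$ as the \emph{same} quotient module, I would cite proposition \ref{sam} together with the diagram $(D_1)$ above it: there it was shown that $\ker(P \twoheadrightarrow E\gamma) = \ker(P \twoheadrightarrow E) = L_\zeta$, because the twist happens only on the socle-coset labels, not on the underlying set of cosets. Thus the two quotients of $P$ coincide in $\mathfrak{V}(P)$, which is exactly the virtual identification claimed.

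The only mildly delicate step is making the translation into the $(1,\gamma)$-notation of \cite{StG1} truly unambiguous, since one has to fix the embedding $E \hookrightarrow E \times C$ via $e \mapsto (e, \sigma(e))$ so that the twist $(1,\gamma)$ acts in the intended way; but once that embedding is pinned down the rest is formal.
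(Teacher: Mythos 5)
Your overall plan matches the paper's in spirit but diverges in detail on the decisive point, and you have a cross‑reference error worth flagging.

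For (i) you invoke the universal property of the pull‑back; the paper instead unwinds the standard concrete model $E\circ id_{C}=\bigcup_{c}\{(\sigma^{-1}(c),c)\}\subset E\times C$ and observes that its elements are just those of $E$ with a superfluous label $c$, then backs this up in virtual terms by lemma \ref{D-a} ($(\mathop{id_{C}})^{\ast}\mathcal{E}=\mathcal{E}$, inducing identity on $A$). Your universal‑property argument identifies the pull‑back with $E$ only up to canonical isomorphism; the paper is careful to note that $E\circ id_{C}$ is a priori ``only defined up to isomorphism, not set‑theoretically'' and that the natural identification is what makes the statement meaningful. The two routes end at the same place, but the paper's emphasis on the concrete model is what makes the $(1,\gamma)$ translation in (ii) literal rather than heuristic.

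For (ii), you fold the pull‑back description into the $(1,\gamma)$ notation and then jump to the ``same quotient of $P$'' claim by citing proposition~\ref{sam} and diagram $(D_{1})$. That citation is wrong in a way that matters: $(D_{1})$ is the diagram for \emph{lower} proportionals $\mathrm{a}\mathcal{E}$ (twist by an automorphism of the socle $A$), whereas $E\gamma$ is an \emph{upper} proportional (twist by an automorphism of the head $C$). The observation that $\ker(P\twoheadrightarrow E\gamma)=\ker(P\twoheadrightarrow E)$ because the twist $\widetilde{\gamma^{-1}}$ only permutes $A$‑cosets without changing the underlying set is made in the \emph{unlabeled} diagram and paragraph immediately preceding lemma~\ref{Upr}, not in $(D_{1})$. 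More importantly, the paper does not rest on that earlier remark for the lemma at hand; it re‑derives the claim here by exhibiting an explicit epimorphism $P\to E\circ\gamma$ (via the bijection $\gamma\mapsto\gamma\circ\tau$, $\tau=\sigma\circ\rho$) and checking that its kernel coincides with $\ker\xi$. So your argument is in essence right, but you should correct the reference to point at the upper‑proportionality discussion before lemma~\ref{Upr} (or, better, recreate the paper's $\tau$‑based epimorphism), and you should be aware that the direction of the twist means the second coordinate is $\gamma^{-1}c$, so the superscript $(1,\gamma)$ in StG1's notation encodes the inverse rather than a direct application of $\gamma$.
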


\begin{proof}
\bigskip (i) We have to remind that the extension module $E\circ id_{C}\ $is
only defined up to isomorphism, not set-theoretically; it is also clear that
it is isomorphic to $E$, it is only its extension class that varies. Its
standard concrete construction is given as a submodule of $E\times C$, in
this\ case $=\tbigcup\limits_{c\in C}\left\{ \left( \sigma ^{-1}\left(
c\right) ,c\right) \right\} $, where $\sigma :E\rightarrow C$ in the exact
sequence, whose elements may be viewed as being the same as those of $E$,
just written with the superfluous suffix $c$. It is in fact easy to check
that they are naturally isomorphic.

On the other hand lemma \ref{D-a} guarantees that $\left( id_{C}\right)
^{\ast }\mathcal{E=E}\left( id_{C}\right) =\mathcal{E}$, therefore in the
frame of a virtual category the fact that $\mathcal{E}$\ still remains in
the same equivalence after appliance of $\left( id_{C}\right) ^{\ast }$,
notably also while inducing identity on $C$,\ it is clear that $\left(
id_{C}\right) ^{\ast }$\ also induces the identity map on $A$, meaning that,
in virtual terms, $\left( id_{C}\right) ^{\ast }E=E\left( id_{C}\right) $\
is identical with $E$.

(ii) Compared to this writing of $E=E\circ id_{C}$, $E\circ \gamma $ is
written as

$\tbigcup\limits_{c\in C}\left\{ \left( \sigma ^{-1}\left( c\right) ,\gamma
^{-1}c\right) \right\} =\left( E\circ id_{C}\right) ^{\left( 1,\gamma
\right) }$, according to \cite[Prop.36]{StG1}, which is identical to $E$ as
a subset of $E\times C$, but that construction being not virtually defined
anyway, that does not mean anything here.

We can similarly to (i) above see that $\left( E\circ \gamma _{1}\right)
\circ \gamma _{2}=$\ $E\circ \left( \gamma _{1}\circ \gamma _{2}\right) $\
also as sets, implying that the functor $YExt\left( \_,A\right) $ may also
give rise to a functor from modules to sets, the sets of the extension
modules.\ On the other hand it is immediate to check that $\left( \mathcal{E}%
\gamma _{1}\right) \gamma _{2}=$\ $\mathcal{E}\left( \gamma _{1}\gamma
_{2}\right) $\ and, similarly, that $\alpha _{1}\left( \alpha _{2}\mathcal{E}%
\right) =\left( \alpha _{1}\alpha _{2}\right) \mathcal{E}$ for $\alpha
_{1},\alpha _{2}\in EndA$.

We have clearly a bijection between $AutC$ and $Hom\left( P,C\right) $, $%
\gamma \longmapsto \overline{\gamma }:=\gamma \circ \tau $, with $\tau
=\sigma \circ \rho $, $\rho :P\twoheadrightarrow E$ canonical, in which we
may attach the canonical projection $\tau $ to $id_{C}$, that virtually
corresponds to the extension module $E$. But then $\overline{\gamma ^{-1}}%
=\gamma ^{-1}\circ \rho $\ \ factors through the extension module $E\circ
\gamma $ in virtual terms, hence we get an epimorphism $P\longrightarrow
E\circ \gamma $ which is equal to $\beta ^{-1}\circ \rho $ has the same
kernel as $\xi $.\ This proves that $E$\ and $E\circ \gamma $\ are virtually
the same factor modules in $P$.\ \ 
\end{proof}

\bigskip \bigskip

\section{$\mathfrak{D}$- or $\mathfrak{K}$-space of virtual extensions of
irreducibles}

\ \bigskip We wish now to enhance our analysis preceding proposition \ref%
{sam} above.

Let us so again consider $\Sigma A$ as there and let us choose some
decomposition $\Sigma A=\tbigoplus\limits_{i=1}^{m}A_{i}$,\ with $A_{i}$\
fixed; let also $s_{i}:A_{i}\rightarrow A$\ be some fixed isomorphisms
("identomorphisms"), with $A$\ a "prototypic isomorphic copy" for them all:
Notice that such a "prototypic isomorphic copy" of the $A_{i}$'s also allows
us now by virtue of the "identomorphisms" $s_{i}$ to "coordinate" (or
identify, if you prefer) their automorphisms. Denote by $p_{i}$, $i=1,...,m$%
, the standard projections of $\tbigoplus\limits_{i=1}^{m}A_{i}$. We have a
natural isomorphism between\ $Hom_{R}\left( \Omega C,A\right) $ and $%
Hom_{R}\left( \Sigma A,A\right) $, therefore we may identify any element of
the first by an element of the second - and vice versa. Finally we get

\ \ \ \ \ \ \ \textbf{(4)} $\ \ Hom_{R}\left( \Omega C,A\right) \cong
\tbigoplus\limits_{i=1}^{m}Hom_{R}\left( A_{i},A\right) $.

Let us also denote the family of lower proportionality classes in $%
YExt_{R}^{1}\left( C,A\right) $ by $\overline{YExt_{R}^{1}\left( C,A\right) }
$.

We are now ready to define addition inside $YExt_{R}^{1}\left( C,A\right) $;
let us first introduce some relevant notation: \ \ 

We shall represent the homomorphism $s_{i}\circ p_{i}:\Sigma A\rightarrow A$%
\ by a column of length $m$, having $1$ as the $i$'th and $0$ at all other
entries. It is clear that such a homomorphism $s_{i}\circ p_{i}$ corresponds
then to the "default"\ extension $\mathcal{E}_{i}$, that is realized as a
quotient $E_{i}$ of $P$, "virtually containing"\ $A_{i}$ and in which the
epimorphism is just the canonical one.

This quotient $E_{i}$, virtually corresponding to an edge $%
\begin{array}{c}
C \\ 
A_{i}%
\end{array}%
$, is the \textit{injective extract} of $A_{i}$, see definition \ref{Dextr}
(in the virtual category of the projective cover of $C$).

We shall\ consider all homomorphisms $\Sigma A\rightarrow A$ written up as

$\tsum\limits_{i=1}^{m}\alpha _{i}\circ s_{i}\circ p_{i}$=$\left[ 
\begin{array}{ccc}
\alpha _{1} & ... & \alpha _{m}%
\end{array}%
\right] \circ \left[ 
\begin{array}{c}
s_{1}\circ p_{1} \\ 
. \\ 
. \\ 
. \\ 
s_{m}\circ p_{m}%
\end{array}%
\right] $=$\left[ 
\begin{array}{ccc}
\alpha _{1} & ... & \alpha _{m}%
\end{array}%
\right] \circ \left[ 
\begin{array}{c}
1 \\ 
. \\ 
. \\ 
. \\ 
1%
\end{array}%
\right] $, which we shall then briefly denote by $\left[ 
\begin{array}{ccc}
\alpha _{1} & ... & \alpha _{m}%
\end{array}%
\right] $ or, even simpler, as $\left( \alpha _{1},...,\alpha _{m}\right) $,
where $\alpha _{i}\in End\left( A\right) $.\ 

For $\mathcal{E}\in YExt_{R}^{1}\left( C,A\right) $ denote so by $\phi
\left( \mathcal{E}\right) $ its attached homomorphism $\Sigma A\rightarrow A$%
, and let $\overline{\alpha }\left( \mathcal{E}\right) $\ be the element of $%
\left( End\left( A\right) \right) ^{m}$, corresponding to $\phi \left( 
\mathcal{E}\right) $ in the way just described, and thus attached to $%
\mathcal{E}$; for $\kappa $\ such a homomorphism $\Sigma A\rightarrow A$,
let $\sup \left( \kappa \right) $ ($\subset \Sigma A$) denote its support, $%
\overline{\alpha }_{\kappa }\in \left( End\left( A\right) \right) ^{m}$ its
attached $m$-tuple of $A$-endomorphisms\ and, by a slight slackness of
notation, $ex\left( \kappa \right) =ex\left( \overline{\alpha }_{\kappa
}\right) $ the corresponding extension. \ \ \ 

\begin{definition}
\label{exsum}For $\mathcal{E}_{1}$, $\mathcal{E}_{2}\in YExt_{R}^{1}\left(
C,A\right) $, define their sum $\mathcal{E}_{1}+\mathcal{E}_{2}$ as the
extension class $ex\left( \phi \left( \mathcal{E}_{1}\right) +\phi \left( 
\mathcal{E}_{2}\right) \right) $.
\end{definition}

\bigskip Notice that our definition may be viewed as a virtualized analogue
to, but it may not be compared with the well known Baer sum, as these are
two different things: That is, the one is a formal and abstract
construction, the other is virtual.

\begin{example}
\label{exS}Let us look at some easy examples: Let $m=2$, $\alpha _{1}=\alpha
_{2}=id_{A}$ and set $\mathcal{E}_{1}=ex\left( \alpha _{1},0\right) $, $%
\mathcal{E}_{2}=ex\left( 0,a_{2}\right) $;\ then the middle term of the sum $%
ex\left( \phi \left( \mathcal{E}_{1}\right) \right) +ex\left( \phi \left( 
\mathcal{E}_{2}\right) \right) $ corresponds to the support gotten by the
(split) factoring out of the kernel of $\left( \alpha _{1},\alpha
_{2}\right) =\left( id_{A_{1}},id_{A_{2}}\right) $ on $\Sigma A=A_{1}\oplus
A_{2}$, reminding of taking the codiagonal $\triangledown _{A}$\ in the Baer
definition. If we generalize to the similar example for an arbitrary $m$, we
shall then have to (split-)outquotient on the $\mathfrak{D}_{A}$-space $%
\Sigma A$ (where $\mathfrak{D}_{A}=Hom_{R}\left( A,A\right) $) the
"hyperplane" defined by the equation $a_{1}+...+a_{m}=0$, where the choice
of coordinate system corresponds to choosing the summands $A_{i}$ of\ $%
\Sigma A$\ together with the choice of the $m$\ identomorphisms $%
s_{i}:A_{i}\rightarrow A$.\ \ \ 
\end{example}

We point out that this analysis actually takes place in a subdirect product:
Indeed, by recalling the ordering \ref{VO} on the family of submodules of
subquotients of $P$, let $U$\ be defined as the slimmest quotient of $P$, so
that $\Sigma A$\ be contained in it as a submodule. It is quite clear that $%
U $\ is\ the virtual pull-back of $E_{i}$'s pull-back over (their canonical
epimorphisms onto)\ $C$. In that way we somehow turn back to the beginning
of this article.

\begin{remark}
We might as well have approached the issue dually, by considering the
extension classes $YExt_{R}^{1}\left( C,A\right) $ embedded in the injective
hull of $A$, identifiable then so by homomorphisms in $Hom_{R}\left(
C,\Omega ^{-1}A\right) $, as we have seen.\ \ 
\end{remark}

\bigskip

\bigskip It is clear that, for any given extension $\mathcal{E}\in
YExt_{R}^{1}\left( C,A\right) $, $\alpha \mathcal{E}$ is realized by the
same quotient $E$ of $P$\ as $\mathcal{E}$\ does. Now we shall prove the
statement of lemma \ref{Upr} above anew, by involving the machinery that we
have developed:

\begin{lemma}
\bigskip For any $\mathcal{E}\in YExt_{R}^{1}\left( C,A\right) $ and any $%
\gamma \in AutC$, $\mathcal{E}\gamma $ is also realizable by the same factor
module $E$\ in $P$.
\end{lemma}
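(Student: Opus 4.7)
The plan is to exhibit a surjection $\mu : P \twoheadrightarrow E\gamma$ whose kernel coincides with $L := \ker \pi$, where $\pi : P \twoheadrightarrow E$ is the canonical projection realizing $E$ as the quotient $P/L$. Once this is done, both $E$ and $E\gamma$ will be realized by the same submodule $L$ of $P$, hence represent the same object of $\mathfrak{V}(P)$. First I would fix $\pi : P \twoheadrightarrow E$ and set $\rho := \sigma \circ \pi : P \twoheadrightarrow C$ for $\sigma$ the epimorphism in $\mathcal{E}$, noting that $L \subseteq \ker \rho = radP$.

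Next I would construct $\mu$ via the universal property of the pull-back defining $E\gamma$. Modelling $E\gamma$ concretely as $\{(e,c) \in E \times C : \sigma(e) = \gamma(c)\}$, the pair $(\pi,\ \gamma^{-1} \circ \rho) : P \to E \times C$ satisfies $\sigma \circ \pi = \rho = \gamma \circ (\gamma^{-1} \circ \rho)$, so it factors uniquely through some $\mu : P \to E\gamma$. Surjectivity follows because the first-coordinate projection $E\gamma \to E$ is an isomorphism (the 5-lemma cited before lemma \ref{Upr}) and its composition with $\mu$ is $\pi$. The kernel is immediate: $\mu(p) = 0$ amounts to $\pi(p) = 0$ (whence $\rho(p) = \sigma(\pi(p)) = 0$ automatically), giving $\ker \mu = L$.

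The third step is interpretation. The induced isomorphism $P/L \cong E\gamma$ identifies $E\gamma$ with literally the same quotient module of $P$ that realizes $E$; what changes between $\mathcal{E}$ and $\mathcal{E}\gamma$ is only the accompanying head projection, being $\rho$ in one case and $\gamma^{-1} \circ \rho$ in the other, which precisely encodes the ``twist'' $\tilde{\gamma}^{-1}$ already visible in the diagram preceding lemma \ref{Upr}. Set-theoretically this is the relabelling by $\gamma^{-1}$ of the $A$-cosets of $E$ anticipated in the discussion of $(E \circ id_C)^{(1,\gamma)}$ in the preceding lemma.

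The main obstacle is not the construction but the conceptual step of reading ``the same factor module'' in the virtual sense. Two a priori different surjections $\pi$ and $\mu$ out of $P$ have equal kernels, hence define the same submodule of $P$ and thus the same virtual object in $\mathfrak{V}(P)$; but for this to genuinely coincide with the extension $\mathcal{E}\gamma$ one must verify that the socle identification with $A$ is preserved. That follows from the fact that the pullback map $E\gamma \to E$ induces the identity on $A$ (by commutativity of the defining square with $id_A$), so the restriction of $\mu$ to the socle of $P/L$ gives the same identomorphism as that of $\pi$, placing the argument squarely inside the framework of definition \ref{VCat}.
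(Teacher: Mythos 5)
Your proof is correct, but it takes a genuinely different route from the paper's argument for this particular lemma. You construct the epimorphism $\mu=(\pi,\ \gamma^{-1}\circ\rho):P\twoheadrightarrow E\gamma$ directly from the universal property of the pull-back, verify surjectivity via the first-coordinate projection $E\gamma\to E$, and compute $\ker\mu=\ker\pi=L$; you then note that the induced socle identification coincides with that of $\pi$. This is essentially the paper's \emph{earlier} argument for lemma \ref{Upr} (the observation that $\ker\left(P\twoheadrightarrow E\gamma\right)=\ker\left(P\twoheadrightarrow E\right)$, with the twist $\tilde{\gamma}^{-1}$ merely rearranging the $A$-cosets), made fully explicit. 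The lemma you were asked to prove, however, is expressly a \emph{re-derivation} of \ref{Upr} ``by involving the machinery that we have developed'' in section 4: the paper reduces, via proposition \ref{sam}, definition \ref{exsum} and relation (4), to a basis extension $\alpha\mathcal{E}_1$ with support $A_1\subset\Sigma A$, then embeds $E_1$ into a suitable injective hull $I$ of $A$ and derives a contradiction from the supposition that $\alpha\mathcal{E}_1\gamma$ had a different support in $P$, using that the action of $\gamma$ is ``similarly defined'' in both settings. Your argument is cleaner, more self-contained, and does not appeal to the support/$\Sigma A$ formalism or to the dual picture in $I$, whereas the paper's proof buys a demonstration that the newly developed coordinate machinery reproduces the earlier result. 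Both establish the lemma; yours is arguably the more transparent proof, though it misses the paper's methodological point of exercising the new framework.
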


\begin{proof}
\bigskip In view of proposition \ref{sam}, as well as definition \ref{exsum}
and relation (4) it suffices to prove the claim for an extension having one
of the fixed $A_{i}$'s as its support, therefore with one of the $E_{i}$'s
as its realizing factor module; without loss of generality we may assume
that its support is $A_{1}$, i.e. that our extension is $\alpha \mathcal{E}%
_{1}$,\ for some $\alpha \in AutA$. Now we may take the injective hull $I$\
of $A$\ in such a way, that $E_{1}$\ is virtually embedded in it as a
submodule. If now $\alpha \mathcal{E}_{1}\gamma $ had another support than $%
A_{1}$\ in $P$, then that should also be the case inside $I$, as
multiplication $\left( \alpha \mathcal{E}_{1}\right) \gamma $\ is \textbf{%
similarly} defined in both cases. But that is a contradiction.
\end{proof}

\bigskip Thus we may in fact simply speak of proportionality classes in $%
YExt_{R}^{1}\left( C,A\right) $ and of their virtual realizations in the
virtual category $\mathfrak{V}\left( P\right) $; however we have still two
dual descriptions of those classes, in a way that becomes somewhat more
clarified in the following theorem.

Set again $Hom_{R}\left( C,C\right) \cong \mathfrak{D}_{c}$. Then we shall
prove that the mentioned bijection between certain homomorphisms and the
extension classes $YExt_{R}^{1}\left( C,A\right) $, is in reality a $%
\mathfrak{D}_{A}$-$\mathfrak{D}_{C}$-bimodule isomorphism, in a way that
gives us a very concrete virtual insight into the module structure of $%
YExt_{R}^{1}\left( C,A\right) $:\ \ \ 

\begin{theorem}
\label{YonHom}$\mathfrak{a}$. $YExt_{R}^{1}\left( C,A\right) $ has a $%
\mathfrak{D}_{A}$-$\mathfrak{D}_{C}$-bimodule structure.

$\mathfrak{b}$. There exists a (left) $\mathfrak{D}_{A}$-module isomorphism 
\textit{between} $YExt_{R}^{1}\left( C,A\right) $\ and

$Hom_{R}\left( \Omega C,A\right) $ and, similarly, a (right) $\mathfrak{D}%
_{c}$-module isomorphism \textit{between} $YExt_{R}^{1}\left( C,A\right) $
and\ $Hom_{R}\left( C,\Omega ^{-1}A\right) $, where $\Omega C=radP$ and $%
\Omega ^{-1}A=I/socI$, with $P$ the projective cover of $C$, $I$\ the
injective hull of $A$. These module isomorphisms give rise to natural
equivalences of the corresponding module valued bifunctors (compare with \ref%
{RNE}). \ \ 

$\mathfrak{c}$. $\overline{YExt_{R}^{1}\left( C,A\right) }$ has\ the the
structure of a $\mathfrak{D}_{A}$-projective space and it is isomorphic to $%
\emph{P}^{m-1}\left( \mathfrak{D}_{A}\right) $.\ Dually, the family $%
\overline{\overline{YExt_{R}^{1}\left( C,A\right) }}$ of upper
proportionality classes in $YExt_{R}^{1}\left( C,A\right) $ has the
structure of a $\mathfrak{D}_{C}$-projective space and it is isomorphic to $%
\emph{P}^{m-1}\left( \mathfrak{D}_{C}\right) $.
\end{theorem}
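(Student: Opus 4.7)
The plan for part (a) is to define the right $\mathfrak{D}_C$-action on $YExt_R^1(C,A)$ via pull-back, $\mathcal{E}\cdot\gamma := \mathcal{E}\gamma$, and the left $\mathfrak{D}_A$-action via push-out, $\alpha\cdot\mathcal{E} := \alpha\mathcal{E}$, extending each from the unit group to the full division ring by sending $0$ to the split class (consistent with the identification $0\leftrightarrow 0$ on the $Hom$ side). The identities $(\alpha\mathcal{E})\gamma = \alpha(\mathcal{E}\gamma)$, $(\alpha_1\alpha_2)\mathcal{E} = \alpha_1(\alpha_2\mathcal{E})$, and $\mathcal{E}(\gamma_1\gamma_2) = (\mathcal{E}\gamma_1)\gamma_2$ were already noted in the proof of the final lemma of the previous section and are standard consequences of the interchange between pull-back and push-out functors (\textit{cf.} \cite[III.1]{ML}). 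Distributivity over the sum from Definition \ref{exsum} will be inherited from part (b).

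For part (b), I would use the Yoneda bijection of Proposition \ref{sam}, $\mathcal{E}\mapsto\phi(\mathcal{E})\in Hom_R(\Omega C,A)$. Additivity is built into Definition \ref{exsum}, since $\mathcal{E}_1+\mathcal{E}_2$ is defined precisely as the extension attached to $\phi(\mathcal{E}_1)+\phi(\mathcal{E}_2)$. For $\mathfrak{D}_A$-linearity, a diagram chase of $(D_2)$ shows that the epimorphism $\zeta:\Omega C\to A$ attached to $\alpha\mathcal{E}$ is simply $\alpha\circ\zeta$, i.e.\ $\phi(\alpha\mathcal{E})=\alpha\circ\phi(\mathcal{E})$, identifying the push-out action with post-composition. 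The dual right $\mathfrak{D}_C$-module isomorphism with $Hom_R(C,\Omega^{-1}A)$ is obtained by dualizing into the injective hull $I$ of $A$, exactly as sketched before Proposition \ref{sam}. Naturality of both isomorphisms as bifunctors then follows from the functoriality of push-out and pull-back.

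For part (c), combine the left $\mathfrak{D}_A$-module isomorphism of (b) with relation (4) to obtain $YExt_R^1(C,A)\cong Hom_R(\Omega C,A)\cong\bigoplus_{i=1}^m Hom_R(A_i,A)\cong\mathfrak{D}_A^m$ (once the identomorphisms $s_i$ are fixed). Lower proportionality is by definition the orbit relation of $AutA=\mathfrak{D}_A^{\times}$ acting by push-out on $YExt_R^1(C,A)$, which under this isomorphism becomes left scalar multiplication on $\mathfrak{D}_A^m$. Its non-zero orbits are, by definition, the points of $\mathbb{P}^{m-1}(\mathfrak{D}_A)$, giving $\overline{YExt_R^1(C,A)}\cong\mathbb{P}^{m-1}(\mathfrak{D}_A)$. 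The upper proportionality statement is proved identically on the injective-hull side, yielding $\mathbb{P}^{m-1}(\mathfrak{D}_C)$.

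The \textbf{main obstacle} will be step (b), specifically verifying that the virtually extracted homomorphism $\phi(\mathcal{E})$ transforms cleanly under push-out by an endomorphism. This requires the ``support'' decomposition $\Sigma A=\ker\widetilde{\alpha}\oplus\widetilde{A}$ together with the restriction $\mathrm{a}:\widetilde{A}\to A$ used in the analysis preceding Proposition \ref{sam} to behave functorially in $\alpha$, which in turn rests on the fact that a change of identomorphisms $s_i$ only rescales the isomorphism by an overall $\mathfrak{D}_A$-scalar --- a genuinely virtual check, not merely one up to isomorphism. Once this is in place, the commuting of the two actions in (a) is essentially the interchange law between push-out and pull-back, and (c) becomes automatic.
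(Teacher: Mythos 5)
Your proposal follows the same route as the paper: reduce via relation (4) and Definition \ref{exsum} to extensions supported on a single $A_i$, read off additivity of $\phi$ from Definition \ref{exsum}, obtain $\mathfrak{D}_A$-linearity as post-composition, invoke MacLane's Ch.\ III lemmata for the interchange law $(\alpha\mathcal{E})\gamma=\alpha(\mathcal{E}\gamma)$, and pass to orbit spaces of $\mathfrak{D}_A^{\times}$ and $\mathfrak{D}_C^{\times}$ for part ($\mathfrak{c}$). The only substantive difference is that you flag as an ``obstacle'' the functoriality in $\alpha$ of the support decomposition $\Sigma A=\ker\widetilde{\alpha}\oplus\widetilde{A}$, a point the paper treats as immediate from Definition \ref{exsum}; your caution there is reasonable but does not change the argument.
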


\begin{proof}
\bigskip \bigskip In view of definition \ref{exsum} and relation (4), it
suffices to prove the $\mathfrak{D}_{A}$-module pseudo-distributivity
properties for extensions having the $A_{i}$'s as their support.

The properties $id_{A}\mathcal{E=E}$, $\sigma \left( \mathcal{E}_{1}+%
\mathcal{E}_{2}\right) =\sigma \mathcal{E}_{1}+\sigma \mathcal{E}_{2}$, $%
\sigma \mathcal{E}+\tau \mathcal{E}=\left( \sigma +\tau \right) \mathcal{E}$%
\ are then a direct consequence of definition \ref{exsum}, where in the last
we have also to notice that the extension classes on both sides of the
equality indeed have the same support.\ \ 

We use the dual arguments for the right $\mathfrak{D}_{C}$-module structure.

As for the left \& right pseudoassociativity and the blended one, $\left(
\alpha \mathcal{E}\right) \gamma =\alpha \left( \mathcal{E}\gamma \right) $,
we refer to \cite[Ch. III, lemmata 1.2, 1.4, 1.6]{ML}.

Regarding ($\mathfrak{c}$), it is clear that "muliplying" from left with an $%
\alpha \in AutA$, correspondingly from the right with a $\gamma \in AutC$,
doesn't change the proportionality class. The rest is easily checked.
\end{proof}

\begin{remark}
\bigskip \bigskip This theorem is reminiscent of the known Auslander-Reiten
formula (see for example \cite[Th. 3.4.1]{LAH} or the original paper \cite%
{AR}): However it has to be pointed out that our context here (regarding
extensions, their addition \& module structure) is very different, being a
virtual one. On the other hand we are giving here this subject only the
scope, that may serve us to achieve our next step: That is, virtual diagrams
of modules, see \cite{StG}.
\end{remark}

We wish to close here by specializing the last theorem to a very usual case,
in which $R$\ shall be a finite dimensional $\mathfrak{K}$-algebra, where $%
\mathfrak{K}$\ is an algebraically closed field. Before doing that, we go
through a quick review of some basic relevant facts.\ \ 

Let us begin by looking at the $R$-endomorphisms of $\Sigma A\cong A^{m}$, $%
A $\ a simple $R$-module, beginning in a more general context: \ $%
End_{R}\left( A^{m}\right) \cong M_{m}\left( \mathfrak{D}\right) $, the ring
of $m\times m$ matrices with entries from $\mathfrak{D=}End_{R}\left(
A\right) $.

It is an immediate consequence of the Density Theorem that, if $M$\ is a
semisimple $R$-module, which is finitely generated as an $End_{R}\left(
M\right) $-module, then the canonical homomorphism $R\longrightarrow $\ $%
End_{End_{R}\left( M\right) }\left( M\right) $ is surjective. If $A$\ is a
simple $R$-module, then as a (finitely generated) module over the division
ring $\mathfrak{D}=End_{R}\left( A\right) $\ it must be free, say $\cong 
\mathfrak{D}^{\lambda }$, therefore is $End_{\mathfrak{D}}\left( A\right)
\cong M_{\lambda }\left( \mathfrak{D}^{op}\right) $ and we get the canonical
surjective homomorphism $R\longrightarrow End_{\mathfrak{D}}\left( A\right)
\cong M_{\lambda }\left( \mathfrak{D}^{op}\right) $, which in case $A$\ is
also a faithful $R$-module (i.e. by substituting the appropriate $R$-block,
in this case just meaning the appropriate simple summand of $R$, for $R$)
becomes an isomorphism. Notice that here, although we haven't assumed
semisimplicity of the ring $R$, it is its faithful action on a simple module
that implies its simplicity.

If we now take $R$\ to be a $\mathfrak{K}$-algebra, $\mathfrak{K}$\ a field,
then is $End_{R}\left( M\right) $\ a $\mathfrak{K}$-algebra too, therefore
is $M$\ a $\mathfrak{K}$-vector space. In case $\dim _{\mathfrak{K}}M<\infty 
$, then is $\dim _{\mathfrak{K}}End_{R}\left( M\right) <\left( \dim _{%
\mathfrak{K}}M\right) ^{2}<\infty $\ too, as $End_{R}\left( M\right) \subset
End_{\mathfrak{K}}\left( M\right) $.

By assuming further $R$\ to be a finite dimensional $\mathfrak{K}$-algebra
(notably also implying that it is Artinian), every simple $R$-module $A$,
being an $R$-epimorphic image of $R$, shall necessarily be finite $\mathfrak{%
K}$-dimensional. Since $\mathfrak{K}\cong \mathfrak{K}\cdot id_{A}\subset
End_{R}\left( A\right) =\mathfrak{D}$, the assumed finite $\mathfrak{K}$%
-dimensionality of $A$\ implies certainly that $A$\ is finitely generated as
a $\mathfrak{D}$-module; but then, as we have seen, the canonical
homomorphism $R\longrightarrow $\ $End_{\mathfrak{D}}\left( A\right) $ is
surjective.

If we also assume that $\mathfrak{K}$\ is algebraically closed then, by the
well known Schur lemma, $\mathfrak{D}=End_{R}\left( A\right) =$\ $\mathfrak{K%
}\cdot id_{A}\cong \mathfrak{K}$.

\bigskip We have not started from this assumption, while it is very
essential from our point of view to look at the action of $End_{R}\left(
A\right) $\ ($\cong \mathfrak{K}$ in this last case) on $YExt_{R}^{1}\left(
C,A\right) $. However in this case the last theorem becomes:

\begin{corollary}
Assume $R$\ to be a finite dimensional $\mathfrak{K}$-algebra, with $%
\mathfrak{K}$\ an algebraically closed field. With the same notation as
above, we then have:

$\mathfrak{a}$. $YExt_{R}^{1}\left( C,A\right) $ has a $\mathfrak{K}$%
-bimodule structure.

$\mathfrak{b}$. There exists a (left) $\mathfrak{K}$-isomorphism \textit{%
between} $YExt_{R}^{1}\left( C,A\right) $\ and

$Hom_{R}\left( \Omega C,A\right) $ and, similarly, a (right) $\mathfrak{K}$%
-module isomorphism \textit{between}

$YExt_{R}^{1}\left( C,A\right) $ and\ $Hom_{R}\left( C,\Omega ^{-1}A\right) $%
, where $\Omega C=radP$ and $\Omega ^{-1}A=I/socI$, with $P$ the projective
cover of $C$, $I$\ the injective hull of $A$. Furthermore, there exist
natural equivalences of the corresponding $\mathfrak{K}$-module valued
bifunctors.\ \ 

$\mathfrak{c}$. $\overline{YExt_{R}^{1}\left( C,A\right) }$ has\ the the
structure of a $\mathfrak{K}$-projective space and it is isomorphic to $%
\emph{P}^{m-1}\left( \mathfrak{K}\right) $.\ Dually, the family $\overline{%
\overline{YExt_{R}^{1}\left( C,A\right) }}$ of upper proportionality classes
in $YExt_{R}^{1}\left( C,A\right) $ has the structure of a $\mathfrak{K}$%
-projective space and, as such, it is isomorphic to $\emph{P}^{m-1}\left( 
\mathfrak{K}\right) $.
\end{corollary}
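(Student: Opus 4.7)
The plan is to obtain this corollary as a direct specialization of Theorem \ref{YonHom}, the only real input being the collapse of the endomorphism division rings $\mathfrak{D}_A$ and $\mathfrak{D}_C$ to $\mathfrak{K}$ itself under the extra hypotheses.

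First, I would invoke the review paragraph preceding the statement: since $R$ is a finite dimensional $\mathfrak{K}$-algebra, each simple $R$-module is a quotient of $R$ and hence finite-dimensional over $\mathfrak{K}$; Schur's lemma together with the assumption that $\mathfrak{K}$ is algebraically closed then forces $\mathrm{End}_R(A) = \mathfrak{K}\cdot \mathrm{id}_A \cong \mathfrak{K}$, and similarly $\mathrm{End}_R(C)\cong \mathfrak{K}$. Thus $\mathfrak{D}_A \cong \mathfrak{D}_C\cong\mathfrak{K}$, and these identifications are the canonical ones sending $\lambda\in\mathfrak{K}$ to scalar multiplication by $\lambda$.

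Next, I would read off the three assertions from Theorem \ref{YonHom} under these identifications. For $(\mathfrak{a})$, the $\mathfrak{D}_A$-$\mathfrak{D}_C$-bimodule structure on $YExt_R^1(C,A)$ provided by Theorem \ref{YonHom}$(\mathfrak{a})$ becomes a $\mathfrak{K}$-$\mathfrak{K}$-bimodule structure. For $(\mathfrak{b})$, the left $\mathfrak{D}_A$-module isomorphism $YExt_R^1(C,A)\cong\mathrm{Hom}_R(\Omega C,A)$ of Theorem \ref{YonHom}$(\mathfrak{b})$ becomes a left $\mathfrak{K}$-isomorphism, and dually on the right; naturality of the $\mathfrak{K}\hookrightarrow \mathrm{End}_R(A)$, $\mathfrak{K}\hookrightarrow \mathrm{End}_R(C)$ identifications ensures that the natural equivalence of bifunctors is preserved when we reinterpret the bifunctors as $\mathfrak{K}$-module valued. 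For $(\mathfrak{c})$, the $\mathfrak{D}_A$-projective space isomorphism $\overline{YExt_R^1(C,A)} \cong \mathrm{P}^{m-1}(\mathfrak{D}_A)$ becomes $\mathrm{P}^{m-1}(\mathfrak{K})$, and dually for $\overline{\overline{YExt_R^1(C,A)}}$.

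There is essentially no obstacle beyond bookkeeping here: the content is entirely in Theorem \ref{YonHom}, and the specialization is mechanical once Schur's lemma over an algebraically closed field is in hand. The one point worth flagging explicitly is that the $\mathfrak{K}$-algebra structure on $\mathrm{End}_R(A)$ induced by the embedding $\mathfrak{K}\hookrightarrow R$ agrees with the identification $\mathrm{End}_R(A)\cong \mathfrak{K}$ coming from Schur, so that the left $\mathfrak{K}$-action on $YExt_R^1(C,A)$ is unambiguous; this is immediate from $\lambda\cdot\mathrm{id}_A\in\mathrm{End}_R(A)$ acting as $\lambda$ on every simple composition factor isomorphic to $A$. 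Once this is recorded, each of $(\mathfrak{a})$, $(\mathfrak{b})$, $(\mathfrak{c})$ is obtained by substitution in the corresponding part of Theorem \ref{YonHom}.
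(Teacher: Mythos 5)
Your proposal is correct and matches the paper's approach: the paper gives no separate proof for this corollary, instead relying on the preceding review paragraphs to establish (via the finite-dimensionality of simple modules and Schur's lemma over an algebraically closed field) that $\mathfrak{D}_A\cong\mathfrak{D}_C\cong\mathfrak{K}$, after which the corollary is read off from Theorem \ref{YonHom} exactly as you describe. Your flagging of the compatibility between the $\mathfrak{K}\hookrightarrow R$ action and the Schur identification is a reasonable small addition, consistent with the paper's closing remark that one must be careful about coordinating the left and right $\mathfrak{K}$-actions.
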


\bigskip\ Notice that, as long as we have not "coordinated"/interrelated the
left with the right action of $\mathfrak{K}$\ on $YExt_{R}^{1}\left(
C,A\right) $, we have to be careful with the non-commutable $\mathfrak{K}$%
-bimodule structure of the latter.

\bigskip\ 

\ \ \ \ \ \

\end{document}